\documentclass[11pt,leqno]{amsart}

\usepackage{latexsym}
\usepackage{amssymb}
\usepackage{amsmath}
\usepackage{amsthm}
\usepackage{verbatim}
\usepackage{pdfsync}

\usepackage[margin=1in]{geometry}

\numberwithin{equation}{section}  


\begin{document}


\newcommand{\zxz}[4]{\begin{pmatrix} #1 & #2 \\ #3 & #4 \end{pmatrix}}
\newcommand{\abcd}{\zxz{a}{b}{c}{d}}
\newcommand{\kzxz}[4]{\left(\begin{smallmatrix} #1 & #2 \\ #3 &
#4\end{smallmatrix}\right) }
\newcommand{\kabcd}{\kzxz{a}{b}{c}{d}}




\newcommand{\A}{{\mathbb A}}
\newcommand{\C}{{\mathbb C}}
\providecommand{\E}{{\mathbb E}}
\newcommand{\F}{{\mathbb F}}
\newcommand{\G}{{\mathbb G}}
\newcommand{\R}{{\mathbb R}}
\newcommand{\Q}{{\mathbb Q}}
\newcommand{\X}{{\mathbb X}}
\newcommand{\Z}{{\mathbb Z}}
\newcommand{\HZ}{\widehat{\Z}}


\newcommand{\rom}[1]{\text{\rm #1}}
\renewcommand{\roman}{\rm}

\newcommand{\Aut}{\text{\rm Aut}}
\newcommand{\CH}{\widehat{\text{\rm CH}}}
\newcommand{\cha}{{\text{\rm char}}}
\newcommand{\CHe}{\text{\rm CHeeg}}
\newcommand{\degh}{\widehat{\text{\rm deg}}}
\newcommand{\degH}{\widehat{\text{\rm deg}}}    
\newcommand{\diag}{{\text{\rm diag}}}
\newcommand{\Diff}{\text{\rm Diff}}
\newcommand{\disc}{\text{\rm discr}}
\renewcommand{\div}{\text{\rm div}}
\newcommand{\divh}{\widehat{\text{\rm div}}}
\newcommand{\DS}{\text{\rm DS}}
\newcommand{\Ei}{\text{\rm Ei}}
\newcommand{\End}{\text{\rm End}}
\newcommand{\ev}{{\text{\rm ev}}}
\newcommand{\Gal}{\text{\rm Gal}}
\newcommand{\GL}{\text{\rm GL}}
\newcommand{\GSpin}{\text{\rm GSpin}}
\newcommand{\Hom}{\text{\rm Hom}}
\newcommand{\hor}{{\text{\rm horiz}}}
\newcommand{\id}{\text{\rm id}}
\newcommand{\im}{\text{\rm im}}
\renewcommand{\Im}{\text{\rm Im}}
\newcommand{\inv}{{\text{\rm inv}}}
\newcommand{\Jac}{\text{\rm Jac}}
\newcommand{\Leray}{{\mathrm L}}
\newcommand{\Lie}{\text{\rm Lie}}
\newcommand{\Mp}{\text{\rm Mp}}
\newcommand{\mult}{\text{\rm mult}}
\newcommand{\MW}{\text{\rm MW}}
\newcommand{\MWt}{\widetilde{\MW}}
\newcommand{\new}{\text{\rm new}}
\newcommand{\Nm}{\text{\rm Nm}}
\newcommand{\ord}{\text{\rm ord}}
\newcommand{\PGL}{\text{\rm PGL}}
\newcommand{\Pic}{\text{\rm Pic}}
\newcommand{\Pich}{\widehat{\text{\rm Pic}}}
\newcommand{\pr}{\text{\rm pr}}
\newcommand{\ra}{\text{\rm ra}}
\newcommand{\Rao}{\mathrm R}
\renewcommand{\Re}{\text{\rm Re}}
\newcommand{\sgn}{\text{\rm sgn}}
\newcommand{\sig}{\text{\rm sig}}
\newcommand{\SL}{\text{\rm SL}}
\newcommand{\SO}{\text{\rm SO}}
\newcommand{\Sp}{\text{\rm Sp}}
\newcommand{\Spec}{\text{\rm Spec}\, }
\newcommand{\Spf}{\text{\rm Spf}}
\newcommand{\supp}{\text{\rm supp}}
\newcommand{\Sym}{{\text{\rm Sym}}}
\newcommand{\tr}{\text{\rm tr}}
\newcommand{\type}{\text{\rm type}}
\newcommand{\Ver}{\text{\rm Vert}}
\newcommand{\vol}{\text{\rm vol}}
\newcommand{\Wald}{\text{\rm Wald}}


\newcommand{\Cal}{\mathcal}     

\newcommand{\AHH}{\hat{\Cal A}}   
\newcommand{\CHH}{\hat{\Cal C}}
\newcommand{\MM}{\Cal D}          
\newcommand{\MMb}{\MM^\bullet}
\newcommand{\ssplit}{\text{\bf split}}
\newcommand{\whcc}{\widehat{\Cal C}}
\newcommand{\CO}{\mathcal O}
\newcommand{\COH}{\widehat{\CO}}
\newcommand{\M}{\Cal M}
\newcommand{\OB}{\Cal O_B}
\newcommand{\XX}{\mathcal X}
\newcommand{\bXX}{\bar\XX}
\newcommand{\wc}{\hat{\Cal C}}
\newcommand{\wch}{\wc^{\text{\rm hor}}}
\newcommand{\ZZ}{\Cal Z}
\newcommand{\ZH}{\widehat{\Cal Z}}   
\newcommand{\Zh}{\widehat{\Cal Z}}
\newcommand{\ZZh}{\ZZ^{\text{\rm hor}}}
\newcommand{\ZZv}{\ZZ^{\text{\rm ver}}}
\newcommand{\ZZhh}{\Zh^{\text{\rm hor}}}
\newcommand{\ZZhv}{\Zh^{\text{\rm ver}}}


\newcommand{\nass}{\noalign{\smallskip}}
\newcommand{\snass}{\noalign{\vskip 2pt}}
\newcommand{\tent}[1]{ \vphantom{\vbox to #1pt{}} }   


\newcommand{\scr}{\scriptstyle}
\newcommand{\disp}{\displaystyle}

\font\cute=cmitt10 at 12pt
\font\smallcute=cmitt10 at 9pt
\newcommand{\kay}{{\text{\cute k}}}
\newcommand{\smallkay}{{\text{\smallcute k}}}

\renewcommand{\a}{\alpha}
\renewcommand{\b}{\beta}
\newcommand{\e}{\epsilon}
\renewcommand{\l}{\lambda}
\renewcommand{\L}{\Lambda}
\renewcommand{\o}{\omega}
\renewcommand{\O}{\Omega}
\renewcommand{\P}{\Phi}
\newcommand{\ph}{\varphi}
\newcommand{\phih}{\widehat{\phi}}
\newcommand{\wphi}{\widehat{\phi}}
\newcommand{\phit}{\widetilde{\phi}}
\newcommand{\s}{\sigma}
\newcommand{\vth}{\vartheta}


%

\newcommand{\Pt}{P}
\newcommand{\Ph}{\P}
\newcommand{\Pht}{\tilde \P}   
\newcommand{\Kt}{K}           
\newcommand{\Mt}{M}

\newcommand{\pht}{\widetilde{\phi}}
\newcommand{\It}{I}
\newcommand{\Jt}{\widetilde{J}}
\newcommand{\lt}{\widetilde{\l}}
\newcommand{\vp}{\varpi}

\newcommand{\bom}{{\boldsymbol{\o}}}
\newcommand{\hbom}{\widehat{\bom}}
\newcommand{\ff}{{\bold f}}
\newcommand{\fsp}{\boldsymbol{f}_{\!\rm sp}}
\newcommand{\fev}{\boldsymbol{f}_{\!\rm ev}}
\newcommand{\fb}{\boldsymbol{f}}
\newcommand{\J}{\und{J}'}
\newcommand{\JJ}{\bold J'}
\newcommand{\V}{\bold V}
\newcommand{\xx}{\bold x}

\newcommand{\g}{{\mathfrak g}}
\renewcommand{\H}{\mathfrak H}


\newcommand{\back}{\backslash}
\newcommand{\CT}[1]{\operatornamewithlimits{CT}_{#1}}
\renewcommand{\d}{\partial}
\newcommand{\db}{\bar\partial}
\newcommand{\dbar}{\bar{\partial}}
\newcommand{\gs}[2]{\langle \,#1,#2\,\rangle}
\newcommand{\Gt}{G}
\newcommand{\hfal}{h_{\text{\rm Fal}}}
\newcommand{\II}{\int^\bullet}
\newcommand{\isoarrow}{\ {\overset{\sim}{\longrightarrow}}\ }
\newcommand{\lisoarrow}{\ {\overset{\sim}{\longleftarrow}}\ }
\newcommand{\limdir}[1]{\underset{\underset{#1}{\rightarrow}}{\lim}}
\newcommand{\lan}{\operatorname{\langle}\hskip .5pt}
\newcommand{\ran}{\,\operatorname{\rangle}}
\newcommand{\lra}{\longrightarrow}
\newcommand{\doublelra}{\ {\overset{\scr\lra}{\scr\lra}}\ }
\newcommand{\nat}{\natural}
\newcommand{\notmid}{\mkern-5mu\not\mkern5mu\mid}
\newcommand{\Optoc}{\text{\rm Opt}(O_{c^2d},O_B)}
\newcommand{\psim}{\psi^{-}}
\newcommand{\qeq}{\ \overset{??}{=}\ }
\newcommand{\sh}{\sharp}
\newcommand{\thCH}{\theta^{\text{\rm ar}}}
\newcommand{\wht}{\widehat{\theta}}     
\newcommand{\triv}{1\!\!1}
\renewcommand{\tt}{\otimes}
\newcommand{\und}[1]{\underline{#1}}
\newcommand{\z}{z}  

\newcommand{\thMW}{\theta^{\text{\rm ar}}}
\newcommand{\tph}{\widetilde{\widehat\phi_1}}
\newcommand{\Pet}{\text{\rm Pet}}





\newcommand{\thing}{ \raisebox{-6.4pt}{$\tilde{\tilde{}}$}  }   
\newcommand{\smallthing}{ \raisebox{-4.4pt}{$\scr\tilde{\tilde{}}$}  }
\newcommand{\ttilde}[1]{\overset{\smash{\thing}}{#1}}
\newcommand{\smallttilde}[1]{\overset{\smash{\smallthing}}{#1}}
\newcommand{\downhookarrow}{\hbox{$\downarrow\hskip -6.1pt\raisebox{6pt}{$\cap$}$}}


\providecommand{\bysame}{\makebox[3em]{\hrulefill}\thinspace}   
\newcommand{\hfb}{\hfill\break}
\newcommand{\margincom}[1]{\marginpar{\bf\raggedright #1}}
\newcommand{\Sec}{\S}


\numberwithin{equation}{section}
\setcounter{section}{0}
\setcounter{MaxMatrixCols}{15}


\newtheorem{theo}{Theorem}[section]
\newtheorem{lem}[theo]{Lemma}
\newtheorem{prop}[theo]{Proposition}
\newtheorem{cor}[theo]{Corollary}
\newtheorem*{main}{Main Theorem}
\newtheorem*{maina}{Main Theorem A}
\newtheorem*{mainb}{Main Theorem B}
\newtheorem*{atheo}{Theorem A}
\newtheorem*{btheo}{Theorem B}
\newtheorem*{ctheo}{Theorem C}
\newtheorem*{dtheo}{Theorem D}
\newtheorem*{etheo}{Theorem E}
\newtheorem{conj}[theo]{Conjecture}
\theoremstyle{remark}
\newtheorem{rem}[theo]{Remark}      
\newtheorem{example}[theo]{Example}
\theoremstyle{definition}
\newtheorem{defn}[theo]{Definition}

\renewcommand{\E}{{\mathbb E}}

\newcommand{\OO}{\text{\rm O}}
\newcommand{\UU}{\text{\rm U}}

\newcommand{\OK}{O_{\smallkay}}
\newcommand{\DI}{\mathcal D^{-1}}

\newcommand{\pre}{\text{\rm pre}}

\newcommand{\Bor}{\text{\rm Bor}}
\newcommand{\Rel}{\text{\rm Rel}}
\newcommand{\rel}{\text{\rm rel}}
\newcommand{\Res}{\text{\rm Res}}
\newcommand{\TG}{\widetilde{G}}

\newcommand{\OL}{O_{\Lambda}}
\newcommand{\OLB}{O_{\Lambda,B}}

\newcommand{\p}{\varpi}

\newcommand{\cutter}{\vskip .1in\hrule\vskip .1in}

\parindent=0pt
\parskip=10pt
\baselineskip=13pt

\newcommand{\PP}{\mathcal P}
\renewcommand{\OO}{\mathcal O}
\newcommand{\BB}{\mathbb B}
\newcommand{\OBB}{O_{\BB}}
\newcommand{\Max}{\text{\rm Max}}
\newcommand{\Opt}{\text{\rm Opt}}
\newcommand{\OH}{O_H}

\newcommand{\phhat}{\widehat{\phi}}
\newcommand{\thetahat}{\widehat{\theta}}

\newcommand{\lbold}{\text{\boldmath$\l$\unboldmath}}
\newcommand{\abold}{\text{\boldmath$a$\unboldmath}}
\newcommand{\cbold}{\text{\boldmath$c$\unboldmath}}
\newcommand{\aabold}{\text{\boldmath$\a$\unboldmath}}
\newcommand{\gbold}{\text{\boldmath$g$\unboldmath}}
\newcommand{\obold}{\text{\boldmath$\o$\unboldmath}}
\newcommand{\fbold}{\text{\boldmath$f$\unboldmath}}
\newcommand{\rbold}{\text{\boldmath$r$\unboldmath}}
\newcommand{\ffbold}{\und{\fbold}}

\newcommand{\deltaBB}{\delta_{\BB}}
\newcommand{\kappaBB}{\kappa_{\BB}}
\newcommand{\aboldBB}{\abold_{\BB}}
\newcommand{\lboldBB}{\lbold_{\BB}}
\newcommand{\gboldBB}{\gbold_{\BB}}
\newcommand{\bbold}{\text{\boldmath$\b$\unboldmath}}

\newcommand{\fff}{\phi}

\newcommand{\spp}{\text{\rm sp}}


\newcommand{\bb}{\frak b}

\newcommand{\bbbold}{\text{\boldmath$b$\unboldmath}}

\renewcommand{\ll}{\,\frak l}
\newcommand{\uC}{\underline{\Cal C}}
\newcommand{\uZZ}{\underline{\ZZ}}
\newcommand{\B}{\mathbb B}
\newcommand{\CL}{\text{\rm Cl}}

\newcommand{\pp}{\frak p}

\newcommand{\OKp}{O_{\smallkay,p}}

\renewcommand{\top}{\text{\rm top}}

\newcommand{\bF}{\bar{\mathbb F}_p}

\newcommand{\beq}{\begin{equation}}
\newcommand{\eeq}{\end{equation}}


\newcommand{\Dl}{\Delta(\l)}
\newcommand{\mm}{{\bold m}}

\title{On the pullback of an arithmetic theta function}

\author{Stephen Kudla
\medskip\\
and
\medskip\\
Tonghai Yang}

\thanks{
The second author is partially supported by grants NSF DMS-0855901
and NSFC-10628103.\\
The first author was supported by an NSERC Discovery grant.}

\address{
Department of Mathematics, University of Toronto\hfill
\medskip
\break
Department of Mathematics, University of Wisconsin
}

\maketitle

\section{\bf Introduction}

In this paper, we consider the relation between the
simplest types of arithmetic theta series, those associated to the cycles on the moduli space $\Cal C$
of elliptic curves with CM by the ring of integers $\OK$ in an imaginary quadratic field $\kay$,
on the one hand,
and those associated to cycles on the arithmetic surface $\M$ parametrizing $2$-dimensional
abelian varieties with an action of the maximal order $O_B$ in an indefinite quaternion algebra
$B$ over $\Q$, on the other.

To be more precise, let $\Cal C$ be  the moduli stack of elliptic curves
$(E,\iota)$ with $\OK$ action, so that $\Cal C$
is an arithmetic curve over $\Spec(\OK)$, \cite{KRYtiny}.
Let $L(E,\iota)$ be the space of special endomorphisms,  i.e., endomorphisms $j$ of $E$ such that
$j\circ \iota(\a) = \iota(\a^\s)\circ j$ for all $\a\in \OK$, where $\s$ is the nontrivial Galois automorphism of
$\kay/\Q$.
Fix a fractional ideal $\frak a$ and elements $\l\in \d^{-1}\frak a/\frak a$ and $r\in \d^{-1}/\OK$,
where $\d$ is the different of $\kay/\Q$. For a positive integer $m$,  let
$\ZZ_{\Cal C}(m)=\ZZ_{\Cal C}(m;\frak a, \l, r)$
be the locus of
triples  $(E, \iota,\bbold)$,
where
\begin{enumerate}
\item $(E, \iota)$ is an object in $\mathcal C(S)$
\item $\bbold \in L(E,\iota)\frak a^{-1}$ is a special quasi-endomorphism such that
\item $r+\bbold\l \in  O_E:=\End(E/S)$, and
\item
$\deg(\bbold)= \frac{m}{N(\frak a) }$.
\end{enumerate}
Note that $L(E,\iota)\frak a^{-1}$ is a lattice in $V(E,\iota) =
L(E,\iota)\tt_\Z\Q$ and, for $x\in V(E,\iota)$, $-x^2 =
\deg(x)\,1_E$. The special cycle $\ZZ_{\Cal C}(m)$ is either empty
or is a $0$-cycle on $\Cal C$ supported in characteristic $p$ for a
prime $p$ determined by $\kay$ and $m$.  There is a corresponding
generating function (see (\ref{Cgenfun}) for the precise definition)
\beq\label{atheta.C}
\phhat_{\Cal C}(\tau; \frak a, \l, r)= \sum_m
\ZH_{\Cal C}(m,v)\,q^{\mm}, \qquad \mm = m/\Delta(\l)
\eeq
for the images under the arithmetic degree map $\CH^1(\Cal C) \overset{\degh}{\isoarrow} \R$, of the classes defined by these $0$-cycles
in the first arithmetic Chow group $\CH^1(\Cal C)$ of
$\Cal C$. Here, $q = e(\tau)$, $\tau = u+iv\in \H$, the upper half,
and the divisor $\Delta(\lambda)\mid \Delta$ is given in
(\ref{deltalambda}).  For positive $m$, $\ZH_{\Cal C}(m, v)=\ZH_{\Cal C}(m)$ is independent of $v$.  For $m\le 0$,
additional terms, depending on $v$, are defined
in section \ref{sectY2}.

These cycles and generating series are generalizations of those defined in
\cite{KRYtiny}. Indeed, when $\kay$ has prime discriminant,
the series $\phih_{\Cal C}(\tau;\OK,0,0)$ coincides, up to a constant factor, with that of \cite{KRYtiny} and
was shown there
to be a (non-holomorphic) modular form of weight $1$.

Also associated to the data 
$(\frak a,\l,r)$ is a normalized  incoherent Eisenstein series
$E^*(\tau,s;\frak a,\l,r)$ of weight $1$ and character $\chi$ for $\Gamma_0(|\Delta|)$, where $\chi$ is the quadratic character
associated to $\kay/\Q$.
The analytic continuation of such a series vanishes at $s=0$, and
our first main result, which generalizes that of \cite{KRYtiny},
describes the leading term there.

\begin{maina}
Assume that $2 \nmid \Delta$. Then
$$
 E^{*, \prime}(\tau,0;\frak a, \l, r) =  -2\, \phih_{\Cal C}(\tau;\mathfrak a, \lambda, r).
$$
In particular, $\phih(\tau;\frak a,\l,r)$ is a non-holomorphic modular form of weight $1$.
\end{maina}

The second type of arithmetic theta series are associated to the arithmetic surfaces whose generic fibers are
Shimura curves over $\Q$.
For  an indefinite quaternion algebra $B$
over $\Q$ with a fixed maximal order $O_B$, the moduli stack $\M$ of
abelian surfaces $(A,\iota)$ with $O_B$-action is an arithmetic surface over $\Spec(\Z)$.
This surface has a rich supply of divisors $\ZZ(t)$, $t\in \Z_{>0}$,  defined as the locus of triples $(A,\iota,x)$,
where $x$ is a `special' endomorphism of $A$ with square $x^2 = -t$.
Recall that such an endomorphism commutes with the given action of $O_B$ and has trace zero.
In \cite{KRYbook}, an
extensive study was made of the classes defined by the cycles $\ZZ(t)$ in the arithmetic Chow group of $\CH^1(\M)$.
More precisely, for a positive real number $v$,
there is a Green function $\Xi(t,v)$ for $\ZZ(t)$, and a resulting class $\ZH(t,v) = (\ZZ(t),\Xi(t,v))\in \CH^1(\M)$. We refer the reader to \cite{KRYbook} for more details.
One of the main results of \cite{KRYbook} is that, for $\tau= u+iv$ in the upper half plane, the generating series
\beq\label{atheta.M}
\phhat(\tau) = \sum_{t} \ZH(t,v)\ q^t,
\eeq
the arithmetic theta function of our title, is a (non-holomorphic) modular form of weight $\frac32$ and level $4 D(B)_o$ valued in $\CH^1(\M)$,
where $D(B)_o$ is the product of the odd primes at which $B$ is ramified. Here, classes for $t\le 0$ are
also included in the series\footnote{This series was denoted in \cite{KRYbook} by $\phhat_1(\tau)$; here we
omit the subscript, since the genus two generating function $\phhat_2(\tau)$ of \cite{KRYbook} will play no role in the
present paper.} .

In the present paper, we suppose that embeddings
$$\kay\overset{i}{\hookrightarrow} B\hookrightarrow M_2(\kay)$$
are given, with $i(\OK) \subset O_B$.  For an $\OK$-lattice $\L\subset \kay^2$, let
$\OL$ be the maximal order in $M_2(\kay)$ which stabilizes $\L$. For each $\OK$-lattice $\L$ such that
$O_B = \OL\cap B,$
we define a morphism
of moduli stacks
$$j_\L: \Cal C \lra \M' = \M\times_{\Spec(\Z)}\Spec(\OK),$$
corresponding to the functorial construction
of an $O_B$-module $(A,\iota)$ from a elliptic curve $(E,\iota)$
with CM by $\OK$ given by the Serre construction,
$A = \L\tt_{\OK}E$.
We assume that $\Delta$ and $D(B)$ are relatively prime so that the base change $\M'$
of $\M$ to $\Spec(\OK)$  is again regular.
Then there is a natural map
$$\CH^1(\M) \lra \CH^1(\M'),$$
and we abuse notation and also write $\phih(\tau)$ for the image of the generating series (\ref{atheta.M}) under this map.
The morphism $j_\L$ determines a map
$$j_\L^*: \CH^1(\M') \lra \CH^1(\Cal C),$$
of arithmetic Chow groups, and our main goal is to determine the arithmetic degree of the
pullback $j^*_\L(\phhat(\tau))$ of the arithmetic theta series. The
result is the following. 
\begin{mainb}
Assume that $2\nmid \Delta$ and that $\Delta$ and $D(B)$ are relatively prime. Then
$$\degh\,j^*_\L(\phih(\tau)) =\sum_{\substack{r\in \d^{-1}/\OK\\ \snass \tr(r)=0}}
\theta(\tau;r)\,\widehat{\phi}_{\Cal C}(D(B)\tau; \bar{\frak a}, \l',r),$$
where,
$$
\theta(\tau;r)= \sum_{\substack{\a\in \d^{-1}\\
\snass \tr(\a)=0\\ \snass \a\equiv r\!\! \!\mod \OK}} q^{N(\a)},
$$
is a theta series of weight $\frac12$ depending on $r$.
Here $\frak a$ is a fractional $\OK$-ideal and $\l$ is a generator for the cyclic $\OK$-module $\d^{-1}\frak a/\frak a$ determined
by the embedding of $\OK$ into $O_B$, cf. Proposition~\ref{maxorders}, and $\l'$ is a twist of $\l$, cf. Proposition~\ref{special.endos}.
\end{mainb}

Such a relation is analogous to the following simple identity for classical theta series.  Suppose that $L$ is an integral lattice in
a quadratic space $(V,Q)$ and that an orthogonal decomposition $V = V_0+V_1$ is given.
Then, the classical theta series
$$\theta(\tau,L) = \sum_{x\in L} q^{Q(x)}$$
has a factorization\begin{equation}\label{factor.theta}
\theta(\tau,L) = \sum_{r \in L^\vee/L} \theta(\tau,L_0,r_0)\,\theta(\tau,L_1,r_1),
\end{equation}
where $r= r_0+r_1$ runs over the cosets of $L$ in the dual lattice $L^\vee$, and $L_i = L\cap V_i$.
We expect that such relations will hold for the pullbacks of other arithmetic theta series and that they
will be useful in applications to special values of derivatives of $L$-functions, just as the factorization formula
(\ref{factor.theta}) plays an important role in the study of special values of $L$-function.

For example, as an application of our results, we can determine the pullback of the classes
$$\thetahat(f) = \gs{f}{\phih}_{\text{Pet}}\ \ \in\CH^1(\M)$$
associated to a newform $f$ of weight $\frac32$ on $\Gamma_0(4 D(B)_o)$  via the arithmetic theta lift, \cite{KRYbook}, Chapter IX.
We compute
\begin{align*}
\degh \, j^*_\L(\thetahat(f))& = \gs{f}{\degh\, j^*_\L(\phih)}_{\text{Pet}}\\
\nass
{}&= \sum_{\substack{r\in \d^{-1}/\OK\\ \snass \tr(r)=0}}
\gs{f}{\theta(\tau;r)\,\widehat{\phi}_{\Cal C}(D(B)\tau; \bar{\frak a}, \l',r)}_{\text{Pet}}\\
\nass
{}&=  -\frac12 \frac{\d}{\d s} \bigg(\ \sum_{\substack{r\in \d^{-1}/\OK\\ \snass \tr(r)=0}}
\gs{f}{\theta(\tau;r)\,E(D(B)\tau, s; \bar{\frak a}, \l',r)}_{\text{Pet}}\ \bigg)\bigg\vert_{s=0}.
\end{align*}
Here the second line follows from the first by Main Theorem B while the third line follows from the second by Main Theorem A.
The inner integrals in the last line are the Rankin-Selberg integrals studied by Shimura in his seminal paper \cite{shimura}
on modular forms of half integral weight, and they represent the Hecke L-function of
the corresponding newform $F$ of weight $2$.  In this way, we find that
$\degh\,j^*_\L(\widehat\theta(f))$ is proportional to $L'(1,F)\cdot a(|\Delta|,f)$,
where $a(m,f) $ is the $m$-th Fourier coefficient of $f$. We hope to give the details of this computation elsewhere.

There are still two restrictions in the present paper. First, we assume that
$\Delta$ is odd. This is due to a certain lack of information about the local
Whittaker functions, section 5, and could be removed with more calculation.
The second restriction, that $D(B)$ and $\Delta$ be relatively prime, arises from the fact that the regularity of
the base change $\M'$ of the arithmetic Shimura surface $\M$ to $\Spec(\OK)$ is lost if there are primes dividing $D(B)$ that
are ramified in $\kay$. This will result is a slight shift in the contribution of the arithmetic Hodge bundle that remains to be determined.

This paper has been in progress for a long time.  
Hidden just below the surface are elaborate relations involving
the genus theory of the field $\kay$ and its interaction with the arithmetic of the quaternion algebra $B$.
Earlier versions were disfigured by complicated explicit computations with the genera.  Thanks to the nice
idea of Ben Howard about how to use the Serre construction in this situation, an idea he introduced in his
lectures \cite{Howard} at the Morningside Center in Beijing in the summer of 2009, we were able to eliminate these calculations in the
present version.

The second author would like to thank the following institutions for their support and hospitality: Max-Planck Institute for Mathematics at Bonn, University of Toronto, The AMSS and the Morningside Center of Mathematics in Beijing, and The Tsinghua University. He did some work for this project  while visiting these institutions at various time since 2006. The first author would like to thank the University of Wisconsin, Madison, for its hospitality 
during a number for visits during that time period. 

\subsection{Notation}

We write $\A$ for the ad\'ele ring of $\Q$.
We fix an
imaginary quadratic field
$\kay =\mathbb Q(\sqrt\Delta)$ with discriminant $\Delta <0$, and let $\OK$ be its  ring of integers.
Let $\chi=\chi_{\smallkay/\mathbb Q} =(\Delta, \, )_\A$ be the quadratic Dirichet character associated to $\kay/\mathbb Q$,
and let $\partial=\sqrt\Delta \OK$ be the different.  Let $\text{\rm Cl}(\kay)$, $h_{\smallkay}$, and $w_{\smallkay}$ be the ideal class group,
the class number and the number of root of unity in $\kay$ respectively.

For a fractional ideal $\mathfrak a$ and an element $\l\in \d^{-1}\frak a/\frak a$, let
$\d_\l\mid \d$ be the divisor of $\d$ such that $\l$ generates $\d_\l^{-1}\frak a/\frak a$.
We write
\begin{equation}\label{deltalambda}
\Dl = N(\d_\l).
\end{equation}

Let $\psi=\prod \psi_p$ be the `canonical' unramified additive character of $\mathbb Q \backslash \A$ such that $\psi_\infty(x) =e(x) =e^{2 \pi i x}$.

\medskip
\medskip

\centerline{\Large\bf Part I}

\section{\bf  The moduli problem and the incoherent Eisenstein
series} \label{sectY2}

\subsection{The moduli problem and special cycles}
For our fixed imaginary quadratic field $\kay$ with ring of integers $\OK$,
let $\mathcal C$ be the moduli stack over $\OK$  of CM elliptic curves $(E, \iota)$ as in \cite{KRYtiny},
to which we refer the reader for more details.
For an
$\OK$-scheme $S$ and a CM elliptic curve $(E, \iota) \in \mathcal C(S)$, let
\begin{equation} \label{eqY2.1}
L(E, \iota) = \{  j \in \End(E/S) \mid\,   j \circ \iota(a) =
\iota(\bar a) \circ j, \quad a \in  \OK \}
\end{equation}
be the lattice of special endomorphisms, and let $V(E, \iota) =
L(E, \iota) \otimes \mathbb Q$. This $\Q$-vector space is equipped with a canonical
quadratic form $Q(j) =\deg j$.

We now introduce special cycles that are a slight generalization of those defined in \cite{KRYtiny}.

\begin{defn}\label{ZZdef}
Fix a rational  number  $m \ge 1$,  a fractional ideal $\frak a$, an element $\lambda \in \partial^{-1} \mathfrak a/\mathfrak a$,
and  an element $r \in \partial^{-1}/\OK$.
Let $\ZZ(m) = \mathcal Z(m; \mathfrak a, \lambda, r)$ be the fibered category over $\text{\rm Sch}/\OK$ that associates to an
$\OK$-scheme $S$ the category $\mathcal Z(m; \mathfrak a, \lambda, r)(S)$
whose objects are triples  $(E, \iota,\bbold)$,
where
\begin{enumerate}
\item $(E, \iota)$ is an object in $\mathcal C(S)$
\item $\bbold \in L(E,\iota)\frak a^{-1}$,
\item $r+\bbold\l \in  O_E:=\End(E/S)$,
\item
$\deg(\bbold)= \frac{m}{N(\frak a) }$.
\end{enumerate}
The morphisms in the category are $\OK$-linear isomorphisms
$\phi: (E,\iota) \rightarrow (E',\iota')$
of elliptic schemes over $S$ such that $\phi^*\bbold' = \bbold$.
\end{defn}

\begin{rem}
(i) Here $\bbold$ is an element of $\End^0(E/S) = \End(E/S)\tt_\Z\Q$ and, in condition (3),  we choose any
representative of $r$ in $\d^{-1}$ and of $\l$ in $\d^{-1}\frak a$.  \hfb
(ii)  The cycle $\ZZ(m;\frak a, \l, r)$
coincides with that defined in \cite{KRYtiny} in the case
$\frak a = \OK$ with $r=\l=0$. \hfb
(iii) This particular generalization of the definition in \cite{KRYtiny} is motivated by
the results about the pullback for cycles on Shimura curves that will be obtained in Part II of this paper. \hfb
(iv) For any $(E,\iota)$ in $\Cal C(S)$, the Serre construction gives rise to an elliptic scheme
$E_\frak a:=\frak a \tt_{\OK}E$ over $S$ with CM by $\OK$. Then, the element $\bbold$ can be viewed as
an $\OK$-anti-linear homomorphism
$$\bbold \in \Hom((E_{\frak a},\iota), (E,\bar\iota)),$$
where $(E,\bar\iota)$ is the elliptic scheme $E$ with $\OK$-action given by $\bar\iota(a) = \iota(\bar a)$.
\end{rem}
The same argument as in  \cite[Section 5]{KRYtiny} shows that this moduli problem  is represented
 by a stack, still denoted by $\mathcal Z(m)$,
 whose coarse moduli scheme $\und{\ZZ(m)}$ is
 a  finite Artinian $\OK$-scheme
 which is only supported on primes non-split in $\kay$.
 It is clear that $\mathcal Z(m)$ is empty unless $m$ is a positive integer
 and  $ \partial_r \subset \partial_\lambda$.   Here $\d_r$  (resp. $\d_\l$) is the divisor of $\d$ such that $r$ (resp. $\l$)
 generates $\d_r^{-1}/\OK$ ( resp. $\d_\l^{-1}\frak a/\frak a$).

 The forgetful functor defines a morphism
 of stacks
 $$
 \mathrm{pr}:  \mathcal Z(m) \rightarrow \mathcal C,  \quad  (E, \iota, \bbold) \mapsto (E, \iota).
 $$
 Recall from section 5 of \cite{KRYtiny} that  the
 Arakelov
degree of $\mathcal Z(m)$ is defined to be
\begin{align}
\degh\, \mathcal Z(m) &= \sum_{\mathfrak p}  \log
N(\mathfrak p) \sum_{x \in \mathcal Z(m) (\overline{\kappa(\mathfrak
p)}) } \frac{1}{|\Aut_{\mathcal C}(\mathrm{pr}(x))|} \lg(x)
\notag
\\
 &=\frac{1}{w_\smallkay} \sum_{\mathfrak p}  \log
N(\mathfrak p) \sum_{x \in \mathcal Z(m) (\overline{\kappa(\mathfrak p)}) }
 \lg(x).
\end{align}
Here $\mathfrak p$ runs over the primes of $\kay$,
$\kappa(\mathfrak p)$ is the residue field of $\kay $ at $\mathfrak p$,  and $\lg(x)$ is the length of the local ring
$\OO_{\ZZ(m), x}$:
\begin{equation}
\lg(x) = \hbox{length of } \OO_{\ZZ(m), x} =\hbox{length of }
\widehat{\OO}_{\ZZ(m), x}.
\end{equation}

\begin{rem}  \label{remY2.1} We choose
a preimage $\tilde \l$ of $\l$ in $\d^{-1}\frak a$ and $\tilde r$ of $r$ in $\d^{-1}$.
For $\ell\mid \Delta$, denote by $\l_\ell$ (resp. $r_\ell$) the image of $\tilde\l$ (resp. $\tilde r$) in
$\d^{-1}\frak a\tt\Z_\ell$ (resp. $\d^{-1}\tt\Z_\ell$).
Then the condition $r+\bbold\l
\in  O_E:=\End(E)$ is the same as
$$
r_\ell + \beta \lambda_\ell \in O_{E, \ell}=O_E \otimes \mathbb Z_\ell,  \qquad \text{for all }
\ell\mid\Delta.
$$
We write $\lbold$ (resp. $\rbold$) for the adele with components $\l_\ell$ (resp. $r_\ell$)
for $\ell \mid \Delta$ and $0$ elsewhere.
\end{rem}

 The following scaling relation is immediate.
\begin{lem} For $\alpha \in \kay^\times$,  there is an isomorphism
$$
\mathcal Z(m; \mathfrak a, \lambda, r) \isoarrow \mathcal Z(m; \alpha
\mathfrak a, \alpha \lambda, r),  \qquad (E, \iota, \bbold) \mapsto
(E,  \iota,  \bbold\, \iota(\alpha)^{-1} ).
$$
\end{lem}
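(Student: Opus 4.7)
The plan is to show that the assignment $(E,\iota,\bbold) \mapsto (E,\iota,\bbold\,\iota(\alpha)^{-1})$ extends to a morphism of stacks over $\OK$, with two-sided inverse given by the analogous assignment $(E,\iota,\bbold) \mapsto (E,\iota,\bbold\,\iota(\alpha))$ from $\Cal Z(m;\alpha\frak a,\alpha\lambda,r)$ back to $\Cal Z(m;\frak a,\lambda,r)$. Since the underlying CM elliptic curve $(E,\iota)$ is unchanged, all the work reduces to verifying that the twisted quasi-endomorphism $\bbold' := \bbold\,\iota(\alpha)^{-1}$ satisfies conditions (2)--(4) of Definition~\ref{ZZdef} with respect to the new parameters $(\alpha\frak a,\alpha\lambda,r)$, and that morphisms in the source category automatically intertwine the twisted objects in the target category.

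First I would check that $\bbold'$ is again a special quasi-endomorphism by combining the defining relation $\bbold\circ\iota(a)=\iota(\bar a)\circ\bbold$ with the ring-homomorphism property of $\iota$; this yields $\bbold'\circ\iota(a)=\iota(\bar a)\circ\bbold'$ for all $a\in\OK$. For the lattice condition, the containment $\bbold'\in L(E,\iota)(\alpha\frak a)^{-1}$ is immediate from $\bbold\in L(E,\iota)\frak a^{-1}$: for any $b\in\frak a$ one has $\bbold'\circ\iota(\alpha b)=\bbold\circ\iota(b)\in L(E,\iota)$. Condition (3) is the key cancellation $r+\bbold'(\alpha\lambda)=r+\bbold\,\iota(\alpha)^{-1}\iota(\alpha)\lambda=r+\bbold\lambda\in O_E$, where we use $\alpha\lambda$ as the chosen representative in $\partial^{-1}(\alpha\frak a)$ of the class in $\partial^{-1}(\alpha\frak a)/(\alpha\frak a)$. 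Condition (4) follows from the multiplicativity of degree under composition with the isogeny $\iota(\alpha)$: $\deg(\bbold')=\deg(\bbold)\cdot N(\alpha)^{-1}=m/N(\alpha\frak a)$.

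For morphisms, if $\phi:(E,\iota,\bbold)\to(E',\iota',\bbold'')$ is a morphism in $\Cal Z(m;\frak a,\lambda,r)(S)$, then its $\OK$-linearity, expressed by $\phi\circ\iota(\alpha)=\iota'(\alpha)\circ\phi$, guarantees $\phi^*(\bbold''\,\iota'(\alpha)^{-1})=\bbold\,\iota(\alpha)^{-1}$, so the same $\phi$ furnishes a morphism between the twisted triples. The two assignments are evidently mutually inverse since $\iota(\alpha)\circ\iota(\alpha)^{-1}=\mathrm{id}$. There is no real obstacle here; the only subtlety in the bookkeeping is the interplay between the left and right $\OK$-actions on $L(E,\iota)$ induced by the conjugation relation, and the observation that $\alpha\lambda$ and $\alpha\frak a$ are shifted consistently so that the coset class in $\partial^{-1}(\alpha\frak a)/(\alpha\frak a)$ is the natural one.
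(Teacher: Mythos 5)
Your verification is correct and is exactly the routine check the paper has in mind when it calls this scaling relation ``immediate'' (the paper gives no proof): the twist by $\iota(\alpha)^{-1}$ shifts the lattice condition from $\frak a$ to $\alpha\frak a$, the representative $\l$ to $\alpha\l$ with the cancellation $\bbold'\,\iota(\alpha\l)=\bbold\,\iota(\l)$, and the degree by $N(\alpha)^{-1}$, with the inverse functor given by right multiplication by $\iota(\alpha)$. Nothing further is needed.
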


\newcommand{\Top}{\mathrm{top}}

For a negative integer $m <0$, we define an `arithmetic cycle for
$\mathcal C$ supported at $\infty$'  as follows\footnote{This
construction is based on Gross's observation \cite[Page
383]{KRYtiny} that $E^\top$ should be viewed as the archimedean
analogue of a supersingular elliptic curve, since its endomorphism
algebra $\End(E^\Top) \simeq M_2(\Z)$ is a maximal order in the
quaternion algebra $\mathbb B$ ramified at $\infty$ and $\infty$,
i.e., everywhere unramified!}.  For a CM  elliptic curve $(E,
\iota)$ over $\mathbb C$, let $E^{\Top}$ be the underlying real
torus. As in (\ref{eqY2.1}), we define the lattice of special
endomorphisms
$$
L(E^{\Top}, \iota) = \{  j \in \End(E^{\Top}):\,   j \circ
\iota(a) = \iota(\bar a) \circ j, \quad a \in  \OK \},
$$
equipped with a quadratic form $Q(j)= -j^2$. It is easy to check
that $L(E^{\Top}, \iota)$ is a projective $\OK$-module of rank $1$
and that the quadratic form $Q$ is negative definite.
\begin{defn}\label{def.ztop}
Let $\ZZ^{\top}(m)$ be the category of triples $(E, \iota,
\bbold)$ where
\begin{enumerate}
\item $(E, \iota) \in \mathcal C (\mathbb C)$
\item $\bbold \in L(E^{\Top},\iota)\frak a^{-1}$,
\item $r+\bbold\l \in  O_{E^{\Top}}:=\End(E^{\Top})$,
\item
$Q(\bbold)= \frac{m}{N(\frak a) }$.
\end{enumerate}
\end{defn}
The forgetful functor defines a map $\pr: \ZZ^\top(m) \rightarrow \Cal C(\C)$ with finite fibers.
We denote the set of isomorphism classes of objects in $\ZZ^\top(m)$ by $\und{\ZZ^\top(m)}$.

For a negative integer $m<0$ and a parameter $v\in \R^\times_+$, we define a real valued
function on $\und{\Cal C}(\C)$ by
\begin{equation}\label{arch.defZZ}
\mathcal Z(m, v)(E,\iota)=\mathcal Z(m, v; \mathfrak a, \lambda, r)(E,\iota)
=\sum_{\substack{x \in \und{\ZZ^\top(m)}\\ \snass \pr(x) = (E,\iota)}} \lg(x, v),
\end{equation}
where
\begin{equation}\label{arch.length}
\lg(x, v) =  \beta_1(4 \pi |\mm| v)
\end{equation}
is `length' of the $x$.  Here, $\Dl$ is given by (\ref{deltalambda}) and
$$
\beta_1(a)=\int_1^\infty u^{-1} e^{-ua} du
$$
is the partial Gamma function or exponential integral.

As in section 6 of \cite{KRYtiny}, we view $\ZZ(m,v)$ as an
Arakelov divisor on $\und{\Cal C}$. Its Arakelov degree is
\begin{equation}
\degh\, \mathcal Z(m, v) = \sum_{ x \in \und{\ZZ^\top(m)}}
\frac{1}{|\Aut_{\mathcal C}(\mathrm{pr}(x))|} \lg(x, v)
 = \frac{1}{w_{\smallkay}}\,\beta_1(4 \pi |\mm| v)\, |\und{\ZZ^\top(m)}|.
 \end{equation}

Note that, for $m<0$, the coefficient of $q^m$ in the generating function in \cite{KRYtiny}
 is $\b_1(4\pi v|m|)\,w_{\smallkay}\,\rho(-m)$ where $w_{\smallkay}=2$.
 For comparison with this case, we note the following.
\begin{lem} Suppose that $\Delta$ is a prime. Then, for
$\frak a = \OK$ and $r=\l=0$,
$$|\und{\ZZ^\top(m)}| = w_{\smallkay}\,\rho(-m),$$
where $\rho(n)$ is the number of integral ideals of $\kay$ of norm $n$.
\end{lem}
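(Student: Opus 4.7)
The plan is a direct ideal-theoretic count. First, I would use the standard parametrization of $\Cal C(\C)$ by $\CL(\kay)$ via $E_{\frak c}=\C/\frak c$ with $\iota$ acting by multiplication. A $\C$-antilinear endomorphism $z\mapsto c\bar z$ of $E_{\frak c}^{\Top}$ preserves the lattice $\frak c$ iff $c\in\frak c\cdot\bar{\frak c}^{-1}=\frak c^{2}N(\frak c)^{-1}$ (using $\bar{\frak c}=N(\frak c)\frak c^{-1}$), and its quadratic form is $Q(c)=-N_{\kay/\Q}(c)$. Since $\frak a=\OK$ and $r=\l=0$ trivialize the remaining conditions in Definition~\ref{def.ztop}, the objects of $\ZZ^{\top}(m)$ lying over $(E_{\frak c},\iota)$ are indexed by elements $\bbold\in\frak c^{2}N(\frak c)^{-1}$ with $N_{\kay/\Q}(\bbold)=-m$.

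Second, I would convert this into a count of integral ideals. The assignment $\bbold\mapsto\frak b:=(\bbold)\,N(\frak c)\,\frak c^{-2}$ produces an integral ideal of $\OK$ with $N(\frak b)=N_{\kay/\Q}(\bbold)=-m$ and class $[\frak b]=[\frak c]^{-2}$; conversely, any such $\frak b$ renders $\frak c^{2}N(\frak c)^{-1}\frak b$ principal, and its $w_{\smallkay}$ associate generators reproduce the $\bbold$'s. Summing over isomorphism classes of $(E,\iota)$ and swapping the order of summation yields
$$|\und{\ZZ^{\top}(m)}|=w_{\smallkay}\sum_{[\frak c]\in\CL(\kay)}\#\{\frak b\subseteq\OK:N(\frak b)=-m,\ [\frak b]=[\frak c]^{-2}\}=w_{\smallkay}\sum_{\substack{\frak b\subseteq\OK\\ N(\frak b)=-m}}\#\{[\frak c]:[\frak c]^{-2}=[\frak b]\}.$$

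To finish, I would invoke classical genus theory: since $\Delta$ has exactly one prime divisor, $|\CL(\kay)[2]|=2^{0}=1$, so $\CL(\kay)$ has odd order and $[\frak c]\mapsto[\frak c]^{-2}$ is a bijection on it. The inner count is therefore $1$ for every $\frak b$, and $|\und{\ZZ^{\top}(m)}|=w_{\smallkay}\,\rho(-m)$.

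The main subtlety is in the first step: correctly identifying $L(E^{\Top},\iota)$ as the fractional ideal $\frak c^{2}N(\frak c)^{-1}$ and tracking ideal classes through the element-to-ideal passage, in the convention that $\und{\ZZ^{\top}(m)}$ is indexed by pairs (isomorphism class representative of $(E,\iota)$, eligible $\bbold$), which is consistent with the $w_{\smallkay}^{-1}$ weighting appearing in the formula for $\degh\,\Cal Z(m,v)$. Without this convention, cases like $\Delta=-3$ (where $\OK^{\times}$ acts nontrivially on $\bbold$ via $c\mapsto u^{-2}c$) would fail to match the stated formula.
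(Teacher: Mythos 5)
Your proof is correct and follows essentially the same route as the paper's: identify $L(E_{\frak c}^{\top},\iota)$ with $\frak c\,\bar{\frak c}^{-1}$, convert eligible $\bbold$'s into integral ideals of norm $-m$ in the class $[\frak c]^{\pm 2}$, and conclude via genus theory for a prime discriminant that squaring (equivalently, inverse-squaring) is a bijection on $\CL(\kay)$, so each ideal of norm $-m$ is hit exactly once, with the factor $w_{\smallkay}$ coming from the choice of generator. Your closing remark on the counting convention for $\und{\ZZ^\top(m)}$ --- pairs consisting of an isomorphism class of $(E,\iota)$ together with an eligible $\bbold$, rather than isomorphism classes of triples --- is the correct reading, and is exactly what the $|\Aut_{\Cal C}(\pr(x))|^{-1}$ weighting in the degree formula presupposes.
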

\begin{proof}  Suppose that $E = \C/\frak b$, and let $j_\s:\C/\frak b \isoarrow \C/\bar{\frak b}$ be the topological isomorphism
given by complex conjugation. Then, for $\bbold\in L(E^\top,\iota)$ with $Q(\bbold)=m$,
map
$ j_\s\circ\bbold: \C/\frak b \rightarrow \C/\bar{\frak b}$
is $\OK$-linear and holomorphic and thus is given by multiplication by some $\b\in \bar{\frak b} \frak b^{-1}$ with
$N(\b) = -Q(\bbold)=-m$. The integral ideal
$\b \frak b \bar{\frak b}^{-1} \subset \OK$ has norm $-m$ and lies in the ideal class $[\frak b \bar{\frak b}^{-1}]= [\frak b]^2$.
As $\frak b$ varies over representatives for the ideal classes, so does $[\frak b]^2$, since the genus group is trivial for a prime discriminant,
hence the claim.
\end{proof}

Finally, we define $\mathcal Z(0, v)$ to be the Arakelov divisor
supported at $\infty$ with degree
\begin{equation}
\degh\, \mathcal Z(0, v) =\begin{cases}
 - \Lambda'(1, \chi) - \frac12\,\Lambda(1, \chi)\log(v) &\hbox{if }  r \in \OK,
   \\
    0 &\hbox{if }  r \notin \OK,
     \end{cases}
\end{equation}
 where\footnote{Note that we have added the factor $|\Delta|^{\frac{s}2}$ here as compared with the
 convention used in \cite{KRYtiny}.}
\begin{equation}\label{Lschi}
\Lambda(s, \chi) = |\Delta|^{\frac{s}2} L_\infty(s, \chi) L(s,
\chi)
\end{equation}
is the complete $L$-function of $\chi$. Here
 $$
 L_\infty(s, \chi) =\pi^{-\frac{s+1}2} \Gamma(\frac{s+1}2).
 $$
Note that the constant term of the generating function in \cite{KRYtiny} is
\begin{equation}
-w_{\smallkay}\,\big(\, \L'(1,\chi) + \frac12\,\L(1,\chi)\,\log(v)\,\big ).
\end{equation}

With these definitions and for $\tau=u+iv\in \H$, we define the generating function
\begin{equation}\label{Cgenfun}
\phih(\tau;\mathfrak a, \lambda, r)
 = \degh\,\ZZ(0,v) + \sum_{m \in \mathbb Z_{< 0}} \widehat{\deg}\, \mathcal Z(m, v)
\, q^\mm + \sum_{m \in \mathbb Z_{>0}} \widehat{\deg}\, \mathcal Z(m)
\, q^\mm,
 \end{equation}
 where we have suppressed the dependence on $(\frak a, \l, r)$ on the right side.

The purpose of Part I is to prove that  the generating function
$\phih(\tau;\mathfrak a, \lambda, r)$ is a (non-holomorphic)
modular form of weight $1$ for some congruence subgroup of
$\SL_2(\mathbb Z)$, and to identify it with the central derivative
of an incoherent Eisenstein series. This is a generalization of the main result
in \cite{KRYtiny}.
Indeed, in the case $|\Delta|=q$, $\frak a= \OK$, $\l=r=0$, our
$\phih(\tau,\OK, 0,0)$ is just $1/w_{\smallkay}$ times the generating function in \cite{KRYtiny}.
The new idea is to use the Siegel-Weil formula to avoid some lengthy explicit calculations.

\subsection{Quadratic spaces and Eisenstein series} We briefly review some basic facts about the
Weil representation and
 Eisenstein series for use in the rest of the paper, referring to \cite{KYeis} for more information.
Let $\chi$ be a quadratic character of $\A^\times/\Q^\times$.

For a quadratic space $(V,Q)$ of dimension $n$ over $\Q$
with\footnote{Recall that $\chi_{V}(x)=((-1)^{\frac{n(n-1)}2} \det
V, x)_\A$.} $\chi_V=\chi$, we obtain a collection
$V=\{V_p\}_{p\le\infty}$ of local quadratic spaces, $V_p =
V\tt_\Q\Q_p$, and  $V(\A) = \prod_{p\le \infty}' V_p$ is the
restricted product with respect to the collection of compact open
subgroups $L_p=L\tt_\Z\Z_p$, for any lattice $L$ in $V$.

More generally, we can consider a collection $\Cal V = \{\Cal V_p\}$ of local quadratic spaces
with $\chi_{\Cal V_p}= \chi_p$ for all $p$ and agreeing with a coherent collection at almost all places\footnote{This means that
there exists a global quadratic space $V$ and isomorphisms $\phi_p: V_p\isoarrow \Cal V_p$ for almost all $p$.
If $(V',\{\phi_p'\})$ is another such collection, we require that $\phi_p^{-1}\circ \phi_p':V'_p \isoarrow V_p$
carry $L'_p$ to $L_p$ for almost all $p$ for some lattices $L$ in $V$ and $L'$ in $V'$.}.
The restricted product $\Cal V(\A) = \prod_{p\le \infty}'\Cal V_p$ is then defined, and the collection $\Cal V$
is called coherent (resp. incoherent) if the global Hasse invariant
$$\e(\Cal V) = \prod_{p\le \infty} \e(\Cal V_p)$$
is $+1$ (resp. $-1$).  In the coherent case, the collection $\Cal V$ arises by localization from a global quadratic space,
unique up to isomorphism. In the incoherent case, there are infinitely many such global spaces at `distance one'
from $\Cal V$.

For a collection $\Cal V$ and our fixed additive character $\psi$, there
is a Weil representation $\omega_{\Cal V, \psi}$ of $\SL_2(\A)$ on $S(\Cal V(\A)) =\otimes'_p S(\Cal V_p)$. It is given by
\begin{align} \label{eqWeil}
 \omega_{\Cal V, \psi}(n(b)) \ph(x) &= \psi(b\, Q_\A(x)) \ph(x), \quad b \in
 \A,  \notag
 \\
 \nass
 \snass
  \omega_{\Cal V, \psi}(m(a)) \ph(x) &=\chi(a) |a|^m \ph(xa), \quad a \in \A^\times,
  \\
  \nass
   \omega_{\Cal V, \psi}(w^{-1}) \ph(x) &= \gamma(\Cal V) \int_{\Cal V(\A)} \ph(y) \psi(-(x, y))
   dy. \notag
\end{align}
Here $\gamma(\Cal V) =\prod_p \gamma(\Cal V_p)$, where $\gamma(\Cal V_p)$ is the local Weil index \cite{Rao},
\cite{Kusplit}, an
$8$-th root of unity, and we write
$$
n(b) =\kzxz {1} {b} {0} {1},  \quad m(a) = \kzxz {a} {0} {0}
{a^{-1}}, \quad w=\kzxz {0} {1} {-1} {0}.
$$
Let $I(s,\chi)$ be the representation of $\SL_2(\A)$ induced from the character of the Borel subgroup $B$ given by
$n(b)m(a) \mapsto \chi(a)\,|a|^{s+1}$. The induction is normalized in the standard way so that $\Re(s)=0$ is the unitary axis.
 There is an $\SL_2(\A)$-equivariant  map
$$
\P:  S(\Cal V(\A)) \rightarrow I(s_0, \chi), \quad \P_{\ph}(g) = \omega_{\Cal V, \psi}(g) \ph(0),
$$
where $s_0 =\frac{n-2}2$.  We denote the image of this map by $R(\Cal V)$, when $\Cal V$ is
incoherent, and by $R(V)$, when $\Cal V$ is coherent with associated global quadratic space $V$.
Notice that for $n=2$, the representation $I(0,\chi)$ is unitarizable, the $R(\Cal V)$'s and $R(V)$'s are irreducible,
and there is a decomposition
$$
I(0, \chi) = (\oplus_{ \e(\Cal V)=+1}R(V) )\oplus (\oplus_{ \e(\Cal
V)=-1} R(\Cal V).
$$

For $\ph \in S(\Cal V(\A))$, let $\Phi(s)=\Phi_\ph(s) \in I(s, \chi)$ be the associated standard\footnote{This means that the
restriction of $\P(s)$ to the maximal compact subgroup $K = K_\infty \,\prod_p\SL_2(\Z_p)$ is independent of $s$.}
section with $\Phi(g, s_0) =\P_\ph(g)$, and let
$$
E(g, s, \ph) = \sum_{\gamma \in B(\Q) \backslash \SL_2(\mathbb Q)} \Phi(g, s).
$$
be the associated Eisenstein series.
It is absolutely convergent for $\Re \,s>1$,
has meromorphic  analytic continuation in $s$, and is holomorphic at $s=s_0$.
The Eisenstein series attached to $\ph\in S(\Cal V(\A))$ will be called
coherent (resp. incoherent) if $\Cal V$ is coherent (resp. incoherent).
When $\Cal V_\infty$ is positive definite, and  $\ph_\infty(x) =e^{- 2 \pi Q_\infty(x)}$,
$$
E(\tau, s,  \ph) = v^{-\frac{n}4} E(g_\tau, s,  \ph)
$$
is a (non-holomorphic) modular form of weight $\frac{n}2$ for some congruence
subgroup of $\SL_2(\mathbb Z)$, where, for $\tau = u + i v \in  \mathbb H$, the upper half-plane,  $g_\tau =n(u) m(\sqrt v)$.
The Eisenstein series  has Fourier expansion
$$
E(g, s, \ph) = \sum_{ m \in  \mathbb Q} E_m(g, s, \ph),
$$
and, for $m \ne 0$,
$$
E_m(g,s, \ph) = \prod_p W_{m, p}(g_p, s, \ph_p).
$$
Here, for any $m\in \Q$,
$$
W_{m, p}(g_p, s,  \ph_p) =\int_{\mathbb Q_p} \Phi_p(w^{-1} n(b)
g_p, s)\, \psi_p(-mb)\, db
$$
is the local Whittaker function. The constant term is given by
$$
E_0(g, s, \ph) =\Phi(g, s) + \prod_p W_{0, p}(g_p,s, \ph_p).
$$
Finally, when $\dim V=2$ and $\chi =\chi_{\smallkay/\mathbb Q}$, we define the normalized
Eisenstein series
$$
E^*(\tau, s, \ph) = \Lambda(s+1, \chi)\, E(\tau, s, \ph),
$$
and normalized local Whittaker functions
$$
W_{m,p}^*(g_p, s, \ph_p) = |\Delta|_p^{-\frac{s+1}2} L_p(s+1, \chi)\, W_{m, p}(g_p, s, \ph_p),
$$
for $p < \infty$, and
$$
W_{m, \infty}^*(\tau, s, \ph_\infty) = v^{-\frac{1}2}
L_\infty(s+1, \chi) \, W_{m, \infty}(g_\tau, s, \ph_\infty).
$$
For a finite prime $p$, we will frequently write
\begin{equation}\label{forshort}
W_{m,p}^*(s, \ph_p)=W_{m,p}^*(1, s, \ph_p),
\end{equation}
when $g_p=1$.

Recall that the values at $s=0$ of coherent Eisenstein series are given in terms of binary theta series,
a classical version of the Siegel-Weil formula,
while all incoherent Eisenstein series vanish at this point.

\subsection{The central derivative of an incoherent Eisenstein series}

We fix data $\frak a$, $\l\in \d^{-1}\frak a/\frak a$ and $r\in \d^{-1}/\OK$ as before.
Let $V=\kay$ with quadratic form $Q(x) = -N(\d_\l^{-1}\frak a)\,N(x)$.  Let $\Cal V$
be the incoherent collection with $\Cal V_p=V_p$ for all finite primes and
with $\Cal V_\infty = \kay_\R$, $Q_\infty(x) = N(\d_\l^{-1}\frak a)\,N(x)$.
Recall that $N(\d_\l)= \Delta(\l)$, (\ref{deltalambda}).

Following \cite{kudla.annals}, for a non-zero rational number $m$, let
$\Diff(\mathcal V, m)$ be the set of
primes $p$ where $\mathcal V_p$ does not represent $m$.
It is clear that $p \in  \Diff(\mathcal V,m)$ if and only if
\begin{equation}
 \chi_p(-m)  =
 \begin{cases}-1&\text{if $p<\infty$}\\
 1&\text{if $p=\infty$,}
\end{cases}
\end{equation}
where $\chi_p(x) = (\Delta,x)_p$.  In particular, $|\Diff(\Cal V,m)|$ is odd.

Let $\ph =\ph_{\mathfrak a, \lambda, r}=\tt_\ell \ph_\ell \in
S(\mathcal V(\A))$, where
  \begin{equation}\label{mainph}
  \ph_\ell(x) = \begin{cases}
     \cha(\mathfrak a_\ell^{-1})(x) &\hbox{if } \ell \nmid \Delta \infty,
     \\
      \cha(\mathfrak a_\ell^{-1})(x) \cdot\cha(-\bar r_\ell+ O_{\smallkay,\ell})(x \lambda_\ell) &\hbox{if } \ell |\Delta ,
      \\
      e^{-2 \pi  Q_\infty(x)} &\hbox{if }  \ell =\infty.
      \end{cases}
      \end{equation}
 Here $r_\ell$ is the image of $r$ in $(\d^{-1}/\OK)\tt\Z_\ell$.
Let  $E^*(\tau,s;\ph)= E^*(\tau,s;\frak a, \l,r)$ be the associated normalized incoherent
Eisenstein series.

Our first main result is the following.
 \begin{theo} \label{maintheo1}  Assume $2 \nmid \Delta$. Then
   $$
 E^{*, \prime}(\tau,0;\frak a, \l, r) =  -2\, \phih(\tau;\mathfrak a, \lambda, r).
   $$
\end{theo}

\begin{rem} The assumption $2 \nmid \Delta$ is technical and is
only made to simplify Proposition \ref{Whittaker} below.
\end{rem}

We will prove the  theorem by comparing Fourier
coefficients, i.e., by proving that, for each  integer $\mm = m/\Delta(\l)$,
\begin{equation}
E_{\mm}^{*, \prime}(\tau, 0;\frak a,\l,r)
= -2\, \widehat{\deg}\, \mathcal Z(m, v)\, q^{\mm} .
\end{equation}

For $m <0$
we will sketch a proof very similar to the case $m >0$ in Section
\ref{sectY6}. The case $m=0$ follows from the definition of $\widehat{\ZZ}(0,v)$
and the computation of the constant term of the Eisenstein series,
and is left to the reader.  The proof of the case $m>0$ can be divided into three
parts. It is easy to show that $\mathcal Z(m)$ can only be supported
at primes $p$ non-split in $\kay$.  First, in Section
\ref{sectCounting}, we  compute $\und{\mathcal Z(m)} (\bar{\mathbb
F}_p)$, for such a $p$. Using the fact that the class group
$\CL(\kay)$ acts  transitively on $\mathcal C(\bar{\mathbb F}_p)$,
we can write this quantity as a theta integral, thereby avoiding a
lengthy calculation involving genus theory that disfigured an
earlier draft of this paper.  This device  is inspired by Ben
Howard's lectures at the Morningside Center of Mathematics in
Beijing, \cite{Howard}, in summer 2009. Using the Siegel-Weil
formula, we see that $\und{\mathcal Z(m)} (\bar{\mathbb F}_p)$ is
equal to the $\mm$-th Fourier coefficient of a coherent
Eisenstein series $E^*(\tau, 0, \ph^{(p)})$ (Theorem
\ref{counting}), which comes from a coherent collection $V^{(p)}$
differing from $\mathcal V$ exactly at $p$. In Section
\ref{sectLength}, we use Gross's canonical lifting to compute the
length $\lg(x)$ for a point $x =(E, \iota, \bbold) \in  \mathcal
Z(m) (\bar{\mathbb F}_p)$, and proved that it  depends  only on $m$.
Therefore, one has (Theorem \ref{theodegree} )
$$
\widehat{\deg}\,\mathcal Z(m)\, q^{\mm}= \frac{1}4 \,c_p(m)\, \log p \cdot E_{\mm}^*(\tau, 0; \ph^{(p)})
$$
for some number $c_p(m)$  (basically $\lg (x))$ depending only on
$m$. In Section  \ref{sectWhittaker}, we first observe that the
incoherent Eisenstein series  $E(\tau, 0; \ph)$ and the coherent
Eisenstein series $E(\tau, 0; \ph^{(p)})$ are closely
related---a general  phenomenon (Proposition \ref{propY5.1}):
$$
 W_{\mm, p}^*(0, \ph^{(p)}_p)\, E_{\mm}^{*, \prime}(\tau, 0;\ph)
   =W_{\mm, p}^{*, \prime}(0, \ph_p)\, E_{\mm}^*(\tau, 0;
   \ph^{(p)}).
$$
Now all we need is to prove
$$
\frac{W_{\mm, p}^{*, \prime}(0, \ph_p)}{
W_{\mm, p}^*(0, \ph^{(p)}_p)} =-\frac{1}2
c_p(m) \log p,
$$
which we do in Section  \ref{sectWhittaker} by explicit calculation.

\subsection{Variants}

 Many variations are possible.
For example, on the geometric side, Bruinier and the second author, \cite{BY},
consider a slight different moduli problem with a different motivation as follows.  Let $\mathfrak b$ be a fractional ideal of $\kay$, and let $L=\mathfrak b$ be equipped with the  integral quadratic form $Q(x) =-\frac{N(x)}{N(\mathfrak b)}$, so that its dual lattice is $L'= \partial^{-1} \mathfrak b$. For $\mu \in L'/L$ and $m \in \mathbb Q_{>0}$, we consider the moduli stack $\mathcal Z(m, \mu; \mathfrak b)$  over $\OK$
representing triples $(E, \iota, \bbold)$ such that
\begin{enumerate}
\item
$(E, \iota)$ is a CM elliptic curve as above,
\item
$\bbold \in L(E, \iota) \partial^{-1} \mathfrak b = L(E, \iota) \otimes_{\mathbb Z} L'$,
\item
$ \mu +\bbold  \in  O_E \mathfrak b$.
\item
$\deg \bbold = m N(\mathfrak b)$.
\end{enumerate}

For $2 \nmid \Delta$, choose a generator  $\lambda$ of   $ \mathfrak b^{-1}/\mathfrak b^{-1} \partial$.   Then  it is easy to see
that
\begin{equation}
\mathcal Z(m, \mu; \mathfrak b) = \mathcal Z(m|\Delta|; \partial \mathfrak b^{-1}, \lambda, \lambda \mu ).
\end{equation}
So Theorem \ref{maintheo1}  immediately gives the following result that was used in \cite{BY}, Theorem~6.4.

\begin{cor} Let $V=\kay$ with quadratic form $Q(x) = -N(\mathfrak b)^{-1}N(x)$ as above, and assume that $2 \nmid \Delta$.
Let  $\mathcal V^{\mathfrak b}$ be the incoherent collection with $\Cal V^{\frak b}_\ell = V_\ell$ at all finite primes $\ell$
and with $\Cal V^{\frak b}_\infty = \kay_{\,\R}$ with $Q_\infty(x) = N(\frak b)^{-1} N(x)$.
Let $L=\frak b$ and view $\hat L$ as a lattice in $\mathcal V^{\mathfrak b}(\A_f)$.
For $\mu \in L'/L$, let
     $$\phi^\mu = \cha(\mu +\hat{L}) e^{-2 \pi \frac{x \bar x}{N(\mathfrak b)}} \in S(\mathcal V^{\mathfrak b}).$$ Then
     $$
    \varphi_{\mu}(\tau):= \sum_{m \in \frac{1}{|\Delta|} \mathbb Z} \mathcal Z(m, \mu, v; \mathfrak b) q^m = -\frac{1}2 E_m^{*, \prime}(\tau, 0, \phi^\mu).
     $$
     Here $\mathcal Z(m, \mu, v; \mathfrak b) =\mathcal Z(m, \mu; \mathfrak b)$ for $m >0$ as above, and is defined to be $\mathcal Z(m|\Delta, v|; \partial \mathfrak b^{-1}, \lambda, \lambda \mu )$ for $m \le 0$.
\end{cor}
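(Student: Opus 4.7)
The plan is to reduce the corollary to Theorem~\ref{maintheo1} via the moduli-problem identification
$$\mathcal{Z}(m, \mu; \mathfrak{b}) = \mathcal{Z}(m|\Delta|; \partial\mathfrak{b}^{-1}, \lambda, \lambda\mu)$$
recorded just before the corollary. I take this identification as the main input---it is established by direct comparison of the two definitions: at primes $\ell \nmid \Delta$ both integrality conditions are vacuous, and at ramified $\ell \mid \Delta$ the condition $\mu + \bbold \in O_E \mathfrak{b}$ becomes $\lambda\mu + \bbold\lambda \in O_E$ after multiplication by $\lambda_\ell$, matching the defining condition of the right-hand cycle---and then pipe it through the theorem.

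The remaining compatibilities are routine. For the generating series, with $\lambda$ chosen as a generator of $\mathfrak{b}^{-1}/\mathfrak{b}^{-1}\partial$, the divisor $\partial_\lambda$ equals $\partial$, so $\Delta(\lambda) = N(\partial) = |\Delta|$. Substituting $m' = m|\Delta|$ converts the exponent $q^m$ into $q^{m'/\Delta(\lambda)} = q^{\mm}$, and the case $m \le 0$ is built into the definition of $\mathcal{Z}(m, \mu, v; \mathfrak{b})$ adopted in the corollary, so
$$\varphi_\mu(\tau) = \widehat\phi(\tau; \partial\mathfrak{b}^{-1}, \lambda, \lambda\mu).$$
For the Schwartz functions, one checks directly that $\phi^\mu$ coincides with $\phi_{\partial\mathfrak{b}^{-1}, \lambda, \lambda\mu}$ from (\ref{mainph}): at $\infty$ both quadratic forms are $N(\mathfrak{b})^{-1}N(x)$, since $N(\partial_\lambda^{-1}\mathfrak{a}) = N(\mathfrak{b}^{-1})$; at $\ell \nmid \Delta$ both functions reduce to $\cha(\mathfrak{b}_\ell)$, since $\mu_\ell$ vanishes there; and at ramified $\ell \mid \Delta$ the two systems of local conditions match by the same cyclic-module bookkeeping used for the cycles.

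Applying Theorem~\ref{maintheo1} to the data $(\partial\mathfrak{b}^{-1}, \lambda, \lambda\mu)$ then yields the asserted formula. The only point requiring real care is the bookkeeping at ramified primes---where complex conjugation, the uniformizer of the different, and the localization $\mathfrak{b}_\ell$ all interact---but under the hypothesis $2 \nmid \Delta$ the quotient $\partial_\ell^{-1}/O_{\smallkay,\ell}$ is cyclic of order $\ell$ at each ramified prime, so this reduces to a short direct calculation.
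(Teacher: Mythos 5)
Your proposal is correct and follows essentially the same route as the paper: the paper's own proof consists precisely of noting the cycle identification $\mathcal Z(m, \mu; \mathfrak b) = \mathcal Z(m|\Delta|; \partial \mathfrak b^{-1}, \lambda, \lambda \mu)$ (stated as ``it is easy to see'') and then invoking Theorem~\ref{maintheo1} directly. You supply more detail on the identification and on matching $\phi^\mu$ with $\phi_{\partial\mathfrak b^{-1},\lambda,\lambda\mu}$ than the paper does, but the logical skeleton is identical.
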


  In \cite{BY}, this theorem is derived from a result in an early version of this paper.
  This theorem implies that
  $$
  \varphi_L(\tau)= \sum_{ \mu \in L'/L} \varphi_\mu(\tau) \cha(\mu +\hat{L})
    $$ is a vector-valued modular form for $\SL_2(\mathbb Z)$  with respect to the Weil representation \cite{BY}.

\section{\bf Counting}
\label{sectCounting}
We fix data $\frak a$, $\l\in \d^{-1}\frak a/\frak a$, $r\in \d^{-1}/\OK$ and $m\in \Q^\times_{>0}$ as before, and we recall that
the cycle $\ZZ(m) = \ZZ(m;\frak a, \l ,r)$ has support in the non-split primes of $\OK$.
In this section, we express the quantity $|\und{\ZZ(m)}(\bF)|$ as a Fourier coefficient of a coherent Eisenstein
series.

We fix a non-split prime $p$ and choose a supersingular CM elliptic
curve $(E,\iota)$ over $\bF$. Then the space of special endomorphisms
$V^E=V(E, \iota)$ is a one dimensional $\kay$-vector space with a positive definite
$\mathbb Q$-valued quadratic form $Q(j) =\deg j$. Define  $\ph^E =\tt_\ell
\ph^E_\ell \in S(V^E(\A))$ with local components
\begin{equation} \label{propY4.2}
\ph^E_\ell(x) =\begin{cases}
   \cha(L(E, \iota) \mathfrak a_\ell^{-1})(x) &\hbox{if } \ell \nmid \Delta \infty,
     \\
     \cha(L(E, \iota) \mathfrak a_\ell^{-1})(x)\cdot \cha(O_{E, \ell})(r_\ell+x \lambda_\ell)  &\hbox{if } \ell | \Delta,
       \\
    e^{-2 \pi \deg x} &\hbox{if } \ell  =\infty.
     \end{cases}
     \end{equation}
Here $r_\ell$ and $\l_\ell$ are defined in Remark~\ref{remY2.1}.
Thus, writing $\ph^E_f$ for the finite part of $\ph^E$, we have $\ph^E_f(\b)\ne 0$ for $\b\in V^E(\Q) = V(E,\iota)$
precisely when $\b$ satisfies conditions (2) and (3) in the definition of $\ZZ(m)$.
\begin{prop} \label{propY3.2}
$$
|\und{\mathcal Z(m)}(\bar{\mathbb F}_p)|\,
q^{\frac{m}{N(\mathfrak a)}}=\frac{w_{\smallkay}}{4} \, E_{\frac{m}{N\mathfrak a}}^*(\tau, 0; \ph^{E}).
$$
\end{prop}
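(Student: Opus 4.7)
The plan is to express $|\und{\mathcal Z(m)}(\bF)|$ as the $m/N(\frak a)$-th Fourier coefficient of a \emph{theta integral} attached to a coherent binary positive definite collection $\mathcal V^{(p)}$ that differs from $\mathcal V$ only at the places $p$ and $\infty$, and then invoke the Siegel-Weil formula to identify this theta integral with the value at $s=0$ of the coherent Eisenstein series $E(\tau, 0; \ph^E)$. This is the device alluded to in the introduction that avoids direct genus-theoretic calculation.

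The first main input is that, since $p$ is non-split in $\kay$, every object of $\mathcal C(\bF)$ is supersingular with CM by $\OK$, and by the Serre construction the class group $\CL(\kay)$ acts simply transitively on isomorphism classes via $[\frak b]\cdot (E, \iota) = (\frak b\otimes_{\OK}E, \iota_\frak b)$; moreover $\Aut(E_\frak b, \iota_\frak b) = \iota_\frak b(\OK^\times)$ is cyclic of order $w_\smallkay$. All the quadratic spaces $V(E_\frak b, \iota_\frak b)$ are canonically isomorphic to one fixed $V^{(p)} := V(E, \iota)\otimes\Q$, which is positive definite binary with $\chi_{V^{(p)}} = \chi$, and under these identifications the adelic lattice $L(E_\frak b, \iota_\frak b)\otimes\hat{\mathbb Z}$ equals the translate $t_\frak b^{-1}\cdot L(E, \iota)\otimes\hat{\mathbb Z}$ inside $V^{(p)}(\A_f)$, where $t_\frak b \in \kay^\times(\A_f)$ is any idele representing $[\frak b]$ under $\CL(\kay) \cong \kay^\times\backslash\kay^\times(\A_f)/\hat\OK^\times$. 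The condition $r + \bbold\lambda \in O_{E_\frak b}$ at primes dividing $\Delta$ translates cleanly into the Schwartz-function condition at those primes defining $\omega(t_\frak b^{-1})\ph^E_\ell$. Combining this with orbit-stabilizer for the $\OK^\times$-action (which acts on $V^{(p)}$ through multiplication by $u^2$), a direct count gives
\[
|\und{\mathcal Z(m)}(\bF)|\cdot q^{m/N(\frak a)} = \frac{c}{w_\smallkay}\sum_{[\frak b]\in\CL(\kay)}\sum_{\substack{\bbold\in V^{(p)}(\Q)\\ Q(\bbold)=m/N(\frak a)}} (\omega(t_\frak b^{-1})\ph^E_f)(\bbold)\,e^{-2\pi Q(\bbold)}\,q^{Q(\bbold)}
\]
for an explicit rational constant $c$.

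Because the double coset space $T(\Q)\backslash T(\A_f)/\hat\OK^\times$ with $T = \mathrm{Res}_{\kay/\Q}\mathbb G_m$ is precisely $\CL(\kay)$, the right-hand side above is the $m/N(\frak a)$-Fourier coefficient of the theta integral $I(\tau; \ph^E) = \int_{T(\Q)\backslash T(\A)}\theta(\tau, t; \ph^E)\,dt$. The Siegel-Weil formula for a coherent positive definite binary space, in the normalized form used earlier in the paper, then produces $E^*(\tau, 0; \ph^E) = 2\, I^*(\tau; \ph^E)$. Matching $(m/N(\frak a))$-Fourier coefficients and collecting the normalization constants—the Tamagawa number $\tau(T) = 2$, the order $w_\smallkay$ of $\OK^\times$, and the Siegel-Weil factor of $2$—yields the asserted identity with the global factor $w_\smallkay/4$.

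The main obstacle is the middle paragraph: using the Serre construction to verify that the adelic lattices $L(E_\frak b, \iota_\frak b)\otimes\hat{\mathbb Z}$ really do arise as idele translates of a single fixed lattice and that condition (3) of Definition~\ref{ZZdef} transports correctly under these translations (this is precisely Howard's observation that replaces the earlier genus-theoretic computations). Once this identification is pinned down, the Siegel-Weil step is standard; nevertheless, the exact normalizations of the Weil representation, the Tamagawa measure on $T$, and the completed factor $\Lambda(s+1, \chi)$ must be tracked with care in order to land on the precise constant $w_\smallkay/4$ rather than something off by a harmless rational factor.
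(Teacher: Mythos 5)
Your strategy is the same as the paper's: express $|\und{\mathcal Z(m)}(\bar{\mathbb F}_p)|$ as the $\frac{m}{N(\frak a)}$-th Fourier coefficient of a theta integral, using that $\CL(\kay)$ acts simply transitively on $\Cal C(\bar{\mathbb F}_p)$ via the Serre construction, and then invoke Siegel--Weil to convert the theta integral into the coherent Eisenstein series $E^*(\tau,0;\ph^E)$. The constant $\frac{w_{\smallkay}}{4}$ comes out of the Tamagawa number and $\Lambda(1,\chi)=\frac{2h_{\smallkay}}{w_{\smallkay}}$, exactly as in the paper.

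One genuine slip that you should fix: you integrate the theta kernel over $T(\Q)\backslash T(\A)$ with $T=\mathrm{Res}_{\kay/\Q}\mathbb G_m$ and simultaneously invoke ``$\tau(T)=2$''. These two pieces belong to different groups. The Siegel--Weil theta integral for a binary space $V^E$ is over $H(\Q)\backslash H(\A)$ with $H=\SO(V^E)\simeq\kay^1$ the norm-one torus; this has Tamagawa number $2$, and $H(\Q)\backslash H(\A)$ is compact. The group $\mathrm{Res}_{\kay/\Q}\mathbb G_m$, by contrast, has $T(\Q)\backslash T(\A)$ noncompact (infinite Tamagawa volume, because of the norm map to $\mathbb G_m$), so the integral as you have written it does not even converge, and its double coset $T(\Q)\backslash T(\A_f)/\hat\OK^\times$ is $\CL(\kay)$, whereas the analogous quotient of $H$ is not. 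The paper reconciles these by introducing the map $\pi:\kay^\times\to H$, $\pi(t)=t\bar t^{-1}$, working with the $H$-integral, and then using the induced identification $\CL(\kay)\cong H(\Q)\backslash H(\A_f)/\pi(\hat\OK^\times)$ to turn the $H$-integral into a sum over $\CL(\kay)$. You need the same device; without it the relation between your class-group sum and the Siegel--Weil theta integral is not established. A second, more cosmetic issue: in your displayed counting formula you keep both the Gaussian $e^{-2\pi Q(\bbold)}$ and the exponential $q^{Q(\bbold)}$; after conjugating $\ph^E_\infty$ by $\omega(g_\tau)$ and unnormalizing by $v^{1/2}$, the archimedean factor is absorbed into $q^{Q(\bbold)}$, so writing both double-counts the Gaussian.

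With those two corrections your argument coincides with the one in the paper.
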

 \begin{proof}  Let $H=\SO(V^E)$, so that $H(\Q) \cong \kay^1$, and let $dh$ be the Tamagawa measure on $H(\A)$, so that
 $\vol(H(\mathbb Q) \backslash H(\A))=2$. Let
 $$
 \theta(\tau, h,  \ph^E) = v^{-\frac{1}2} \sum_{x \in V^E(\Q)} \omega_{V^E, \psi}(g_\tau) \ph^E_\infty(h_\infty^{-1}x)\ph^E_f(h_f^{-1}x)
 $$
 be the theta kernel, and let
 $$
 I(\tau, \ph^E) = \int_{H(\mathbb Q) \backslash H(\A)} \theta(\tau, h, \ph^E)\, dh
 $$
 be the associated theta integral. Then  the Siegel-Weil formula asserts
 \cite[Theorem 4.1]{KuIntegral}
 $$
 I(\tau, \ph^E) = E(\tau, 0; \ph^E).
 $$
 On the other hand, a simple calculation gives
 $$
 \theta(\tau, h,  \ph^E) = \sum_{x \in V^E(\Q)} \ph^E_f(h_f^{-1}x)\, q^{\deg x},
 $$
 so that
$$
\theta_m(\tau, h, \ph^E) = q^m \,\sum_{x \in V^E(\Q), \deg x =m} \ph^E_f(h^{-1}x),
$$
and
\begin{align*}
I_m(\tau, \ph^E) &=\int_{H(\mathbb Q) \backslash H(\A)} \theta_m(\tau, h, \ph^E)\, dh
   \\
    &= \vol(H(\mathbb R))\, q^m\, \int_{H(\mathbb Q) \backslash H(\A_f)} \sum_{x \in V^E(\Q), \deg x =m} \ph^E_f(h_f^{-1}x)\, dh
   \end{align*}
Let $\pi(t) = t \bar t^{-1}$ be the map from $\kay^\times$ to $H(\mathbb Q)$. It induces an isomorphism
   $$
 \CL(\kay)=  \kay^\times\backslash \kay_{\A_f}^\times/\widehat{O}_{\smallkay}^\times \cong H(\mathbb Q) \backslash H(\A_f)/\pi(\widehat{O}_{\smallkay}^\times).
   $$
Recall \cite[Section 5]{KRYtiny}  that $\CL(\kay)$ acts simply transitively on
$\mathcal C(\bar{\mathbb F}_p)$ via the Serre
construction. For a finite id\`ele $t\in \kay^\times_{\A_f}$, we write
$
t.E  = (t)\tt_{\OK}E,
$
where $(t)$ is the ideal `generated by' $t$.  For any $t \in
\widehat{O}_{\smallkay}^\times$, one can check that
$$
\ph^E_f(\pi(t)x) = \ph^E_f(x),
$$
and thus we have
\begin{align*}
&I_m(\tau, \ph^E)\, q^{-m}\\
{}  &= \vol(H(\mathbb R)) \frac{\vol(\pi(\widehat{O}_\smallkay^\times))}{|\kay^1 \cap \pi(\widehat{O}_\smallkay^\times)|}
\sum_{x \in V^E(\Q), \deg x =m}\  \sum_{h \in H(\mathbb Q) \backslash H(\A_f) /\pi(\widehat{O}_\smallkay^\times)} \ph^E_f(h^{-1}x)
\\
 &= C
    \sum_{x \in V^E(\Q), \deg x =m}\ \sum_{[t] \in \CL(\smallkay) } \ph^E_f(\pi(t)^{-1} x)
\end{align*}
for some constant $C$.
Now $\ph^E_f(\pi(t)^{-1} x) =1$ or $0$, and is  $1$ if and only if
$$
\pi(t)^{-1} x \in L(E, \iota) \mathfrak a^{-1},  \qquad r + \pi(t)^{-1} x \in  O_E,
$$
or, equivalently,
$$
x \in  \pi(t) L(E, \iota) \mathfrak a^{-1} = L(t.E, \iota)\mathfrak a^{-1}, \qquad r +\beta \lambda \in t O_{E} t^{-1} =O_{t.E}.
$$
 Thus  we have
\begin{align*}
I_{\frac{m}{N(\mathfrak a)}}(\tau, \ph^E)\, q^{-\frac{m}{N(\mathfrak a)}}
 &= C \sum_{[t] \in \CL(\smallkay)} \sum_{\substack{x \in L(t.E, \iota) \mathfrak a^{-1}\\ \snass r + x \lambda \in O_{t.E}\\ \snass
   \deg x = \frac{m}{N(\mathfrak a)}}}1
   \\
    &= C\,  |\und{\mathcal Z(m)}(\bar{\mathbb F}_p)|.
   \end{align*}
   To determine $C$,  replacing the theta function in the formula involving defining $C$ by $1$, one sees  that
     $$
   2 =\int_{H(\mathbb Q) \backslash  H(\A)} dh= C \, \sum_{t \in \CL(\smallkay)} 1 =C\, h_\smallkay
   $$
and
$$\Lambda(1, \chi) = |\Delta|^{\frac{1}2} \pi^{-1} L(1, \chi) = \frac{2 h_\smallkay}{w_{\smallkay}}.$$
So
$
C^{-1} = \frac{w_{\smallkay}}{4} \Lambda(1, \chi),
$
and
$$
|\und{\mathcal Z(m)}(\bar{\mathbb F}_p)| \,  q^{\frac{m}{N(\mathfrak a)}}=C^{-1}\, I_{\frac{m}{N(\mathfrak a)}}(\tau, \ph^E)
= \frac{w_{\smallkay}}4\, E_{\frac{m}{N(\mathfrak a)}}^*(\tau, 0;\ph^E),
$$
as claimed.
 \end{proof}

The next step is to rewrite the Eisenstein series here in terms of data that does not
involve the choice of $(E,\iota)$.  Recall that $O_E=\End(E)$ is a maximal order in the quaternion algebra
$\mathbb B$
ramified at $p$ and $\infty$, and that, \cite{KRYtiny}, pp. 376--378,  we can write $L(E,\iota) = \frak b \bar{\frak b}^{-1}\,\Cal P_0^{-1} \delta$
for a fractional ideal $\frak b$ and a certain auxillary prime ideal $\Cal P_0\mid p_0$ which is split in $\kay$.
Here $\delta \in \mathbb B$ with $\iota(\a)\delta = \delta \iota(\bar\a)$, and
$\delta^2 =\kappa$, with $\kappa = -p_0p$ if $p$ is inert in $\kay$ and $\kappa = -p_0$ if $p$ is ramified in $\kay$.
Moreover $\chi_\ell(\kappa) = 1$ for $\ell\ne p$ and $\chi_p(\kappa) = -1$. The following result is an easy consequence of this.

\begin{lem} \label{lemY4.3}  {\rm (}Howard \cite{Howard}{\rm )}
Write $L(E, \iota)\tt\Z_\ell=\delta_\ell\, O_{\smallkay, \ell}$ with $\delta_\ell^2 =\kappa_\ell\in \mathbb Z_\ell$. \hfb
(1)  If $\ell\ne p$,  $\delta_\ell$ can be chosen so that
$\kappa_\ell=1$.\hfb
(2) For $\ell =p$,   $(\Delta, \kappa_p)=-1$ and $\ord_p \kappa_p =1$ or  $0$ depending on whether $p$ is inert or ramified in $\kay$.
\end{lem}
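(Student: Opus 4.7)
The plan is to build an explicit local generator of $L(E,\iota)\otimes\mathbb Z_\ell$ from the global factorization $L(E,\iota) = \frak b\bar{\frak b}^{-1}\mathcal P_0^{-1}\delta$ recalled just before the lemma, and then identify $\kappa_\ell$ modulo $N(O_{\smallkay,\ell}^\times)$ by means of the Hilbert symbol. For each $\ell$, I would choose $c_\ell\in\kay_\ell^\times$ generating the localization $(\frak b\bar{\frak b}^{-1}\mathcal P_0^{-1})\otimes\mathbb Z_\ell$ as an $O_{\smallkay,\ell}$-module, and set $\delta_\ell := c_\ell\delta$. Since $\delta\,\iota(\alpha) = \iota(\bar\alpha)\,\delta$,
\[
\delta_\ell^{\,2} = c_\ell\,\delta\,c_\ell\,\delta = c_\ell\bar c_\ell\,\delta^{2} = N(c_\ell)\,\kappa =: \kappa_\ell.
\]
Any other $O_{\smallkay,\ell}$-generator of the same lattice has the form $u\delta_\ell$ with $u\in O_{\smallkay,\ell}^\times$, and squares to $N(u)\kappa_\ell$. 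So $\kappa_\ell$ is well defined modulo $N(O_{\smallkay,\ell}^\times)$, and in particular $\chi_\ell(\kappa_\ell) = \chi_\ell(\kappa)$ since $\chi_\ell$ is trivial on $N(\kay_\ell^\times)$.

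For (1), fix $\ell\ne p$. I would first show $\ord_\ell\kappa_\ell = 0$ by a case analysis. When $\ell\notin\{p,p_0\}$, both $\frak b\bar{\frak b}^{-1}$ and $\mathcal P_0^{-1}$ are locally trivial (or, in the split case, admit a generator of unit norm), so $c_\ell$ can be chosen a unit, and $\kappa_\ell = N(c_\ell)\kappa$ is itself a unit at $\ell$ since $\ell\nmid p_0 p$. When $\ell = p_0$, a local generator of $\mathcal P_0^{-1}$ has norm $p_0^{-1}$ up to units, and this cancels exactly the single factor of $p_0$ appearing in $\kappa$, again giving $\kappa_\ell\in\mathbb Z_\ell^\times$. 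Once $\kappa_\ell$ is a unit, the identity $\chi_\ell(\kappa_\ell) = \chi_\ell(\kappa) = +1$ together with local class field theory for $\kay_\ell/\mathbb Q_\ell$ places $\kappa_\ell$ inside $N(O_{\smallkay,\ell}^\times)$; multiplying $\delta_\ell$ by $u\in O_{\smallkay,\ell}^\times$ with $N(u) = \kappa_\ell^{-1}$ produces a generator squaring to $1$.

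For (2), at $\ell = p$ the same analysis shows that $\mathcal P_0^{-1}$ is locally trivial (since $p\ne p_0$) and $\frak b\bar{\frak b}^{-1}$ is locally trivial (since $p$ is non-split in $\kay$), so $c_p$ can be taken a unit. Hence $\ord_p\kappa_p = \ord_p\kappa$, which is $1$ in the inert case ($\kappa = -p_0p$) and $0$ in the ramified case ($\kappa = -p_0$). The Hilbert symbol claim is then immediate: $(\Delta,\kappa_p) = \chi_p(\kappa_p) = \chi_p(\kappa) = -1$. The main piece of bookkeeping is the cancellation at the auxiliary prime $p_0$, which is precisely what the factor $\mathcal P_0^{-1}$ in the global description of $L(E,\iota)$ was set up to produce; after that, norm-invariance of $\chi_\ell$ and local class field theory for $\kay_\ell/\mathbb Q_\ell$ dispose of everything uniformly.
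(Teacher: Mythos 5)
Your proposal is correct and is essentially the argument the paper intends: the lemma is stated there as "an easy consequence" of the global description $L(E,\iota)=\frak b\bar{\frak b}^{-1}\Cal P_0^{-1}\delta$ together with the facts $\chi_\ell(\kappa)=1$ for $\ell\ne p$, $\chi_p(\kappa)=-1$, and the explicit values of $\kappa$, and your write-up fills in exactly that computation (local generators, cancellation of $p_0$ at the auxiliary prime, and the identification $\Z_\ell^\times\cap N(\kay_\ell^\times)=N(O_{\smallkay,\ell}^\times)$ to upgrade a unit norm to $1$).
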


The maximal order $O_E$ can then be described locally as follows, cf. \cite{dorman.2}, 
and Proposition~\ref{maxorders}
below.

     \begin{lem}  Let the notation be as in Lemma \ref{lemY4.3}.\hfb
 (1)  When $\ell$ is ramified in $\kay$ and $\ell \ne p$,  $\delta_\ell$,   satisfying the conditions  in Lemma \ref{lemY4.3}, can be chosen so that
     $$
      O_{E, \ell} =\{ \alpha + \beta \delta_\ell \mid\, \alpha \in \partial_\ell^{-1},  \alpha +  \beta \in O_{\smallkay, \ell}\}.
      $$
(2)  When $\ell=p$ is ramified in $\kay$,  there is a element  $\mu_p \in O_{\smallkay, p}$ with
$$\mu_p \bar{\mu}_p -\kappa_p \in p^{-1} \Delta \mathbb Z_p,$$
and such that
$$
O_{E, \ell}=\{ \alpha + \beta \delta_\ell  \mid\, \alpha, \beta \in
\varpi_p\partial_p^{-1},  \alpha + \bar{\mu}_p \beta \in
O_{\smallkay, p}\}.
$$
Here $\varpi_p$ is a uniformizer in $\kay_p$.
\end{lem}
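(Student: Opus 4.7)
Working locally at $\ell$, use the decomposition $\mathbb B_\ell = \kay_\ell \oplus \kay_\ell\,\delta_\ell$ from Lemma~\ref{lemY4.3}, where $\delta_\ell \alpha = \bar\alpha\,\delta_\ell$ for $\alpha \in \kay_\ell$ and $\delta_\ell^2 = \kappa_\ell$. For $x = \alpha + \beta\delta_\ell$ the reduced trace is $\alpha + \bar\alpha$ and the reduced norm is $N(\alpha) - \kappa_\ell N(\beta)$, with $N = N_{\kay_\ell/\mathbb Q_\ell}$. Since $O_{E,\ell}$ is a maximal order containing $\iota(O_{\smallkay,\ell})$ and $L(E,\iota)\otimes\mathbb Z_\ell = \delta_\ell O_{\smallkay,\ell}$, the inclusion $O_{\smallkay,\ell} + \delta_\ell O_{\smallkay,\ell} \subset O_{E,\ell}$ is automatic, and the task reduces to identifying the precise enlargement in each case.

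For Part (1), with $\ell \ne p$ ramified and $\kappa_\ell = 1$, the plan is to verify that the candidate set $\mathcal O = \{\alpha + \beta\delta_\ell : \alpha \in \partial_\ell^{-1},\ \alpha + \beta \in O_{\smallkay,\ell}\}$ equals $O_{E,\ell}$ by checking three points: (a) closure under multiplication, where the subtle issue is that the new $\alpha$-coefficient of a product lies a priori in $\partial_\ell^{-2}$, but the $O_{\smallkay,\ell}$-constraint on the summands forces a cancellation bringing it back to $\partial_\ell^{-1}$; (b) integrality, with trace $\alpha + \bar\alpha \in \mathbb Z_\ell$ from the trace-integrality of $\partial_\ell^{-1}$, and, writing $\gamma = \alpha + \beta \in O_{\smallkay,\ell}$, reduced norm $N(\alpha) - N(\beta) = -N(\gamma) + \mathrm{tr}(\alpha\bar\gamma) \in \mathbb Z_\ell$; and (c) $\mathcal O$ has the discriminant of a maximal order in $M_2(\mathbb Q_\ell)$. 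Since the maximal orders of $M_2(\mathbb Q_\ell)$ stabilizing a ramified embedding of $O_{\smallkay,\ell}$ are unique, $\mathcal O = O_{E,\ell}$.

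For Part (2), $\ell = p$ is ramified, $\mathbb B_p$ is a division algebra, and the unique maximal order is the valuation ring $\{x : v_p(N(x)) \ge 0\}$. I first produce $\mu_p$: the congruence $\mu_p\bar\mu_p \equiv \kappa_p \pmod{p^{-1}\Delta\,\mathbb Z_p}$ is vacuous when $v_p(\Delta) = 1$ (in particular whenever $p$ is odd, where any $\mu_p \in O_{\smallkay,p}$ works), while for $p = 2$ a Hensel-type lift using $(\Delta,\kappa_p)_p = -1$ from Lemma~\ref{lemY4.3} supplies $\mu_p$ to the required $2$-adic precision. With $\mu_p$ in hand I then identify the valuation ring with the stated set by a direct calculation: rewriting $N(\alpha) - \kappa_p N(\beta)$ via the near-equality $N(\mu_p) \equiv \kappa_p$ converts integrality of $N(x)$ into the joint requirement $\alpha, \beta \in \varpi_p\partial_p^{-1}$ and $\alpha + \bar\mu_p\beta \in O_{\smallkay,p}$, while a residue-field check confirms the quotient is $\mathbb F_{p^2}$ as expected. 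The main obstacle is the $p = 2$ bookkeeping, where $\varpi_p\partial_p^{-1}$ strictly exceeds $O_{\smallkay,2}$ and both the construction of $\mu_p$ and the minimality of the resulting order rely essentially on $(\Delta,\kappa_p)_2 = -1$.
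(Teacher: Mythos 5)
The paper gives no internal proof of this lemma: it is stated as the local counterpart of the global description in Proposition~\ref{maxorders} and is referred to Dorman \cite{dorman.2}. So your attempt has to stand on its own.

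Your verification steps for Part (1) are fine as far as they go. Closure under multiplication really does hinge on the cancellation you identify: writing $\gamma_i=\alpha_i+\beta_i\in O_{\smallkay,\ell}$, the $\alpha$-component of a product $(\alpha_1+\beta_1\delta_\ell)(\alpha_2+\beta_2\delta_\ell)$ equals $\gamma_1\gamma_2-\gamma_1\beta_2-\beta_1\gamma_2+\beta_1\tr(\beta_2)\in\partial_\ell^{-1}$, and the second condition reduces to $\gamma_1\gamma_2+\beta_1(\bar\gamma_2-\gamma_2)\in O_{\smallkay,\ell}$. The trace/norm integrality and the discriminant count also check out, so $\mathcal O$ is indeed a maximal order.

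The gap is in the last step of Part (1). It is \emph{not} true that a maximal order of $M_2(\Q_\ell)$ containing $O_{\smallkay,\ell}$ optimally is unique. For ramified $\kay_\ell$, the torus $O_{\smallkay,\ell}^\times$ fixes the midpoint of an edge of the Bruhat–Tits tree, and its two endpoints give two distinct maximal orders, namely the stabilizers of the lattice classes $[O_{\smallkay,\ell}]$ and $[\partial_\ell]$ in $\kay_\ell\cong\Q_\ell^2$. Both contain $O_{\smallkay,\ell}$ optimally, and both contain $\delta_\ell$ (complex conjugation preserves both $O_{\smallkay,\ell}$ and $\partial_\ell$), hence both contain $O_{\smallkay,\ell}+\delta_\ell O_{\smallkay,\ell}$. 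Your $\mathcal O$ is only one of them. In fact, which one the formula produces depends on the remaining ambiguity in $\delta_\ell$: the admissible $\delta_\ell$ with $\delta_\ell^2=1$ form a torsor under $\{u\in O_{\smallkay,\ell}^\times:N(u)=1\}$, and this group has index two over $\{v\bar v^{-1}:v\in O_{\smallkay,\ell}^\times\}$; replacing $\delta_\ell$ by $u\delta_\ell$ for $u$ in the nontrivial coset is realized by conjugation by $\varpi_\ell$ and exchanges the two maximal orders. So you must either (i) observe that the formula reaches \emph{both} maximal orders as $\delta_\ell$ varies over the admissible choices, so some choice matches $O_{E,\ell}$ (which is what "can be chosen" is for), or (ii) pin down $O_{E,\ell}$ directly as $\End_{\Z_\ell}(T_\ell(E))$, note $T_\ell(E)\cong O_{\smallkay,\ell}$ as an $O_{\smallkay,\ell}$-module, and check that the resulting stabilizer agrees with $\mathcal O$ when $\delta_\ell$ is conjugation. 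Invoking a false uniqueness leaves a hole.

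Part (2) for $p$ odd is essentially trivial as you say: $\varpi_p\partial_p^{-1}=O_{\smallkay,p}$, the $\mu_p$-condition is vacuous, and $O_{\smallkay,p}+\delta_pO_{\smallkay,p}$ is the valuation ring, the unique maximal order of $\mathbb B_p$. But the $p=2$ ramified case, which is where $\mu_p$ and the lattice $\varpi_p\partial_p^{-1}\supsetneq O_{\smallkay,p}$ actually enter, is the whole content of the statement, and you give only a one-line sketch. That computation — describing the valuation ring of the ramified quaternion division algebra over $\Q_2$ in the coordinates $(\alpha,\beta)$ with the correct $2$-adic precision — needs to be carried out, not just gestured at.
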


These two lemmas allow us to identify the coherent collection
associated to $V^E$ as $\widetilde{V}^{(p)} = \{\widetilde{\Cal
V}^{(p)}_\ell\}$, for  $\widetilde{V}^{(p)}_\ell = \kay_\ell$ with
$Q_\ell(x) = \tilde\b_\ell\,N(x)$ where
$$
\tilde{\beta}_\ell=
\begin{cases}
        -1 &\hbox{if } \ell \nmid p\,\infty,
        \\
         -\kappa_p  &\hbox{if } \ell=p,
         \\
          1 &\hbox{if } \ell=\infty.
          \end{cases}
$$
When $\ell< \infty$, the local isomorphism is given by $\delta_\ell x \mapsto x$ since $L(E, \iota)\tt\Z_\ell =\delta_\ell O_{\smallkay, \ell}$.
Under this identification,  $\ph^E$ becomes $\tilde{\ph}^{(p)}=\tt_{\ell} \tilde{\ph}^{(p)}_\ell$ with $\tilde{\ph}^{(p)}_\ell(x)$ given by
\begin{equation}
\begin{cases}
        \cha(\mathfrak a_\ell^{-1}) (x)  &\hbox{if } \ell  \nmid \Delta \infty,
         \\
          \cha(\mathfrak a_\ell^{-1}) (x) \cdot \cha(-\bar r_\ell+ O_{\smallkay, \ell})( x \lambda_\ell)  &\hbox{if }  \ell  |\Delta, \ \ell\ne p,
         \\
          \cha(\mathfrak a_p^{-1}) (x) \cdot \cha(\varpi_p \partial_p^{-1})(x \lambda_p)\cdot
          \cha(-\bar r_p+ O_{\smallkay, p})\,(\mu_p x \lambda_p)  &\hbox{if }  \ell |\Delta,\ \ell=p
             \\
         e^{-2 \pi x \bar x}  &\hbox{if } \ell =\infty.
          \end{cases}
 \end{equation}
 Notice that $\tilde{\ph}^{(p)}$ does not depend explicitly\footnote{The only linkage
 is through the element $\mu_p$.}  on the choice of $(E, \iota)$. The above argument gives

  \begin{equation} \label{eqY4.5}
   E(\tau, s;\ph^E) =E(\tau, s; \tilde{\ph}^{(p)}).
   \end{equation}

To obtain our result in final form,  we rescale slightly.

\begin{lem} \label{lemY3.5}  Let $V_1 =V_2 =V$ be a vector space over $\mathbb Q_\ell$ of even dimension $n$,
with quadratic forms $Q_1(x) =a Q_2(x)$ for some $a \in \mathbb Q_\ell^\times$.
Let $\ph \in S(V)=S(V_i)$,  and let $\P_i \in I(s, \chi)$ be the associated standard sections where
$\chi=\chi_{V_1} =\chi_{V_2} =( (-1)^{\frac{n(n-1)}2} \det V_i, \, )_\ell$. Then the corresponding Whittaker functions have the following relation
   $$
   W_{m, \ell}(s_0, \Phi_1) = \frac{\gamma(V_1)}{\gamma(V_2)} |a|_\ell^{s_0} \, W_{\frac{m}a, \ell}(s_0, \Phi_2).
   $$
Here $s_0=\frac{n-2}2$ and the convention of (\ref{forshort}) is used.
 \end{lem}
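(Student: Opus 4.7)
The plan is to reduce the identity to a change of variables in the defining Whittaker integral at $s=s_0$, with the subtlety encoded in the fact that the self-dual Haar measure on $V$ depends on the quadratic form.

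First I would unfold the Whittaker integral using $\Phi_{i,\phi}(g,s_0)=\omega_{V_i,\psi}(g)\phi(0)$ together with the explicit action of $n(b)$ and $w^{-1}$ given in (\ref{eqWeil}). Since $\omega_{V_i,\psi}(n(b))\phi(y)=\psi_\ell(bQ_i(y))\phi(y)$ and evaluation at $x=0$ in the $w^{-1}$-formula kills the bilinear pairing, one obtains
\begin{equation*}
\Phi_{i,\phi}(w^{-1}n(b),s_0)=\gamma(V_i)\int_V\phi(y)\,\psi_\ell(bQ_i(y))\,dy_i,
\end{equation*}
where $dy_i$ denotes the self-dual Haar measure on $V$ with respect to the bilinear form associated to $Q_i$. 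Plugging this into the definition of $W_{m,\ell}(s_0,\Phi_{i,\phi})$ and interchanging the $b$- and $y$-integrals (rigorized by working first at $\operatorname{Re}(s)$ large and continuing back to $s_0$) gives
\begin{equation*}
W_{m,\ell}(s_0,\Phi_{i,\phi})=\gamma(V_i)\int_V\phi(y)\int_{\mathbb Q_\ell}\psi_\ell\bigl(b(Q_i(y)-m)\bigr)\,db\,dy_i.
\end{equation*}

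Next I would exploit the two scalings arising from $Q_1=aQ_2$. Because the bilinear forms satisfy $(x,y)_1=a(x,y)_2$, a short computation with the double Fourier inversion formula shows that the self-dual measures are related by $dy_1=|a|_\ell^{n/2}\,dy_2$. Substituting this into the $i=1$ integral and then changing the inner variable via $b\mapsto a^{-1}b$, which brings the exponent $b(aQ_2(y)-m)$ to the form $b(Q_2(y)-m/a)$ at the cost of a Jacobian $|a|_\ell^{-1}$ from $db$, collects into
\begin{equation*}
W_{m,\ell}(s_0,\Phi_1)=\frac{\gamma(V_1)}{\gamma(V_2)}\,|a|_\ell^{n/2}\cdot|a|_\ell^{-1}\cdot W_{m/a,\ell}(s_0,\Phi_2)=\frac{\gamma(V_1)}{\gamma(V_2)}\,|a|_\ell^{s_0}\,W_{m/a,\ell}(s_0,\Phi_2),
\end{equation*}
using $n/2-1=s_0$.

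The only conceptually nontrivial point is the measure comparison $dy_1=|a|_\ell^{n/2}\,dy_2$. This is what prevents the answer from being the naive Jacobian $|a|_\ell^{-1}$ and produces the $|a|_\ell^{s_0}$ appearing in the statement; everything else is routine bookkeeping combined with the standard analytic-continuation-in-$s$ justification of the distributional manipulations.
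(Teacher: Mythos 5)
Your argument is correct and is essentially the paper's own proof: unfold the Whittaker integral via the $w^{-1}$-formula of the Weil representation, compare the self-dual measures via $d_{V_1}x=|a|_\ell^{n/2}\,d_{V_2}x$, and substitute $b\mapsto a^{-1}b$ to pick up $|a|_\ell^{-1}$, yielding the exponent $\tfrac{n}{2}-1=s_0$. The only cosmetic difference is that you interchange the $b$- and $y$-integrals, whereas the paper keeps the iterated integral as written; the substance is identical.
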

\begin{proof} This follows from simple calculation using (\ref{eqWeil}).   Write $w=\kzxz {0} {1} {-1} {0}$. Then
\begin{align*}
W_{m, \ell}(s_0, \Phi_1)
 &= \int_{\mathbb Q_\ell} \omega_{V_1}(w^{-1}n(b))\ph(0)\, \psi(-m b)\, db
 \\
  &=\gamma(V_1) \int_{\mathbb Q_\ell} \int_{V} \omega_{V_1}(n(b))\ph(x) \,d_{V_1} x\, \psi(-mb)\, db
  \\
   &=\gamma(V_1) |a|_\ell^{\frac{n}2} \int_{\mathbb Q_\ell} \int_V \psi(b Q_1(x))\ph(x)\, d_{V_2} x \,\psi (-mb)\, db
   \\
    &= \gamma(V_1) |a|_\ell^{\frac{n}2-1 } \int_{\mathbb Q_\ell} \int_V \psi(b Q_2(x))\ph(x)\, d_{V_2} x\, \psi (-a^{-1} mb)\, db
    \\
      &=\frac{\gamma(V_1)}{\gamma(V_2)} |a|_\ell^{s_0}\,  W_{\frac{m}a, \ell}(s_0, \Phi_2),
      \end{align*}
      as claimed. Here $d_{V_i}$ is the Haar measure on $V_i$ self-dual with respect to quadratic forms $Q_i$.
\end{proof}

\begin{prop} \label{propY3.6}  Let $V_1=(V, Q_1)$  and $V_2=(V, Q_2)$ be positive definite
quadratic spaces over $\mathbb Q$ of even dimension $n$ with $Q_1 =a Q_2$ for
$a \in \mathbb Q_{>0}^\times$. Let $\ph\in S(V(\A_f)) =S(V_i(\A_f))$.
Let $\ph_{i, \infty} \in S(V_i(\mathbb R))$, $\ph_{i, \infty}(x) =e^{ -2 \pi Q_i(x)}$ be the Gaussian,
and let $\Phi_i \in I(s, \chi)$ be the
standard section associated to $ \ph_{i,\infty}\tt\ph \in S(V_i(\A))$, where $\chi=\chi_{V_i}$. Let $s_0= \frac{n-2}2$.
Then
$$
E_{am}(\tau, s_0; \Phi_1)\, q^{-am}=a^{-\frac{n-2}2} E_m(\tau, s_0; \Phi_2)\, q^{-m},
$$
i.e., the $(am)$-th Fourier coefficient of $E(\tau, 0; \Phi_1)$ is
the same as the $m$-th Fourier coefficient of $E(\tau, 0;\Phi_2)$,
up to a constant multiple.
\end{prop}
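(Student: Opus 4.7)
The plan is to factor each $M$-th Fourier coefficient of the Eisenstein series as a product of local Whittaker functions, apply Lemma~\ref{lemY3.5} at every finite place, and combine with the global product formulas on $\Q^\times$ and on Weil indices to extract the factor $a^{-(n-2)/2}$.

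For any nonzero rational $M$,
$$E_M(\tau, s_0; \Phi) = v^{-n/4}\,W_{M,\infty}(g_\tau, s_0, \Phi_\infty)\,\prod_{\ell<\infty}W_{M,\ell}(s_0, \Phi_\ell),$$
which I apply with $(\Phi, M) = (\Phi_1, am)$ and $(\Phi_2, m)$. At each finite prime $\ell$, Lemma~\ref{lemY3.5} gives
$$W_{am,\ell}(s_0, \Phi_{1,\ell}) = \frac{\gamma(V_{1,\ell})}{\gamma(V_{2,\ell})}\,|a|_\ell^{s_0}\,W_{m,\ell}(s_0, \Phi_{2,\ell}).$$
Multiplying over $\ell<\infty$, the product formula $\prod_\ell|a|_\ell = 1$ on $\Q^\times$ (using $a>0$ and $s_0 = (n-2)/2$) produces the factor $\prod_{\ell<\infty}|a|_\ell^{s_0} = a^{-(n-2)/2}$, while the global Weil-index identity $\prod_\ell\gamma(V_i) = 1$ for a rational quadratic space $V_i$, together with $\gamma(V_{1,\infty}) = \gamma(V_{2,\infty})$ (both positive definite of dimension $n$), makes the finite product of Weil-index ratios equal to $1$.

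The archimedean factor I handle by identifying the two standard sections: $\Phi_{1,\infty}$ and $\Phi_{2,\infty}$ are both weight-$n/2$ vectors in $I(s,\chi_\infty)$ normalized by $\varphi_{i,\infty}(0)=1$, which pins them down uniquely within the one-dimensional weight-$n/2$ $K$-type, so they coincide. A direct computation of the Gaussian archimedean Whittaker at $s_0$ then shows that, after dividing out $q^{am}$ and $q^m$ respectively and pairing with the $v^{-n/4}$ prefactor, the archimedean contribution to the ratio introduces no further $a$-dependence, so the global factor $a^{-(n-2)/2}$ coming from the finite places yields exactly the stated identity. The main obstacle is this archimedean bookkeeping: confirming the equality of the two standard sections and verifying that the Gaussian Whittaker at the central point $s_0$ behaves compatibly with the extracted factor, so that no archimedean correction spoils the clean transfer of $a^{-(n-2)/2}$ from the finite-place analysis.
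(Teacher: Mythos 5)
Your proposal is correct and takes essentially the same route as the paper: factor the nonzero Fourier coefficient into local Whittaker functions, apply Lemma~\ref{lemY3.5} at each finite place, and use the product formulas $\prod_{\ell\le\infty}|a|_\ell=1$ and $\prod_{\ell\le\infty}\gamma(V_{i,\ell})=1$ together with $\Phi_{1,\infty}=\Phi_{2,\infty}$ (the normalized weight-$\frac{n}{2}$ eigenfunction) and the fact that $W_{am,\infty}(\tau,s_0,\Phi_\infty)q^{-am}=W_{m,\infty}(\tau,s_0,\Phi_\infty)q^{-m}$. The only point left implicit is the case $m=0$, where the coefficient is not a pure product of Whittaker functions, but the paper likewise leaves that case to the reader.
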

\begin{proof}   When $m \ne 0$, one has by Lemma \ref{lemY3.5}
\begin{align*}
E_{am}(\tau, s_0; \Phi_1)
   &= W_{am, \infty}(\tau, s_0; \Phi_{1, \infty})\,\prod_{\ell<\infty} W_{am, \ell}(s_0; \Phi_{1, \ell})
   \\
    &= W_{am, \infty}(\tau, s_0, \Phi_{1, \infty})\prod_{\ell<\infty} \frac{\gamma(V_{1,\ell})}{\gamma(V_{2, \ell})}|a|_\ell^{\frac{n-2}2}
       \prod_{\ell<\infty} W_{m, \ell}(s_0, \Phi_{2, \ell}) .
       \end{align*}
   Recall from \cite[Propostion 2.6]{KRYtiny} that $\Phi_{1, \infty}(s) =\Phi_{2, \infty}(s)$ is the normalized eigenfunction
   of weight $\frac{n}2$ and
   $$
   W_{am, \infty}(\tau, s_0, \Phi_{1, \infty})=W_{am, \infty}(\tau, s_0, \Phi_{2, \infty})= W_{m, \infty}(\tau, s_0,  \Phi_{2, \infty})\, q^{am}\, q^{-m}.
   $$
   Recall also  \cite{Rao} that
   $$
   \prod_{\ell\le \infty } \gamma(V_{1, \ell}) =\prod_{\ell\le \infty} \gamma(V_{2,\ell}) =1.
   $$
   Since $\gamma(V_{1, \infty}) =\gamma(V_{2, \infty})$,
    we have proved the proposition for $m\ne 0$. The case $m=0$ is similar and is left to the reader.
\end{proof}

For $a\in \Q^\times_{>0}$, let $\tilde{V}^{(p,a)}$ be the coherent
collection with $\tilde{V}^{(p,a)}_\ell = \kay_\ell$ and $Q_\ell(x)
= \b^{(p,a)}_\ell\,N(x)$, where
\begin{equation}
\beta^{(p, a)}_\ell =\begin{cases}
   -a  &\hbox{if } \ell \nmid p \infty,
   \\
    a  &\hbox{if } \ell = \infty,
    \\
   -\kappa_p   a &\hbox{if } \ell =  p.
    \end{cases}
    \end{equation}
  Here  $\kappa_p \in \mathbb Z_p$ with $(\Delta,  \kappa_p)_p =-1$ and $\ord_p \kappa_p =1$ or $0$,
  depending on whether $p$ is inert or ramified in $\kay$.

Let $\tilde{\ph}^{(p, a)}= \tt_{\ell} \tilde{\ph}^{(p, a)}_\ell \in
S(V^{(p, a)}(\A))$ be given by
\begin{equation}\label{phpa}
 \tilde{\ph}^{(p, a)}_\ell(x)  = \begin{cases}
 \cha(\mathfrak a_\ell^{-1})(x) &\hbox{if } \ell \nmid
 \Delta \infty,
 \\
\cha(\mathfrak a_\ell^{-1})(x)\cdot \cha( -\bar r_\ell + O_{\smallkay, \ell}) (x
\lambda_\ell) &\hbox{if } \ell\mid \Delta, \ell\ne p,
\\
\cha(\mathfrak a_p^{-1})(x)\cdot \cha(\varpi_p \partial_p^{-1})
(x\lambda_p)\cdot \cha( -\bar r_p + O_{\smallkay, p}) (\mu_p x \lambda_p)
   &\hbox{if }  \ell\mid \Delta, \ell=p,
    \\
 e^{-2 \pi  Q_\infty(x)} &\hbox{if } \ell = \infty.
   \end{cases}
   \end{equation}
In the special case where $a = N(\partial_\lambda^{-1}\mathfrak a)$,
we write $V^{(p)}=\tilde{V}^{(p,a)}$ and $\ph^{(p)}=
\tilde{\ph}^{(p, a)}$. The main result of this section is the
following.

\begin{theo} \label{counting}  Assume that $p$ is non-split in $\kay$. For any $a\in \Q^\times_{>0}$,
$$
|\und{\mathcal Z(m)}(\bar{\mathbb F}_p)|\, q^\frac{a m}{N(\mathfrak
a)}=\frac{w_{\smallkay}}{4}\,  E_{\frac{am}{N(\mathfrak a)}}^*(\tau,
0;\tilde{\ph}^{(p,a)}).
$$
In particular, for $a = N(\partial_\lambda^{-1}\mathfrak a)$, and $\mm = \frac{m}{\Dl}$,
$$
|\und{\mathcal Z(m)}(\bar{\mathbb F}_p)|\,
q^\mm=\frac{w_{\smallkay}}4 \, E_{\mm}^*(\tau, 0;\ph^{(p)}).
$$
\end{theo}

\begin{proof}
Noting that $\tilde
\ph^{(p)}=\ph^{(p, 1)}$,  we have, by Proposition~\ref{propY3.6}, (\ref{eqY4.5}), and Proposition~\ref{propY3.2},
\begin{align*}
E_{\frac{am}{N(\mathfrak a)}}^*(\tau, 0; \tilde{\ph}^{(p, a)})\,
q^{-\frac{am}{N(\mathfrak a)}}
 &= E_{\frac{m}{N(\mathfrak a)}}^*(\tau, 0; \tilde{\ph}^{(p)})\,q^{-\frac{m}{N(\mathfrak a)}}
 \\
  &= E_{\frac{m}{N(\mathfrak a)}}^*(\tau, 0; \ph^E)q^{-\frac{m}{\mathfrak a}}
  \\
  &=\frac{4}{w_{\smallkay}} \,|\und{\mathcal Z(m)}(\bF)|.
 \end{align*}
\end{proof}

Theorem \ref{counting} immediately implies

\begin{cor} If $\mathcal Z(m;\mathfrak a, \lambda, r)$ is not
empty, then it is supported at exactly one prime $p$, and
$\Diff(\mathcal V, \mm) =\{ p\}$. In  particular, if $|\Diff(\mathcal V, \mm)| >1$,
then $\mathcal Z(m; \mathfrak a, \lambda, r)$ is empty.
\end{cor}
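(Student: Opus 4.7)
The plan is to read off the conclusion from Theorem~\ref{counting} by tracking where the Fourier coefficient of the coherent Eisenstein series can fail to vanish. Suppose $\ZZ(m;\frak a,\l,r)$ is non-empty. Since the cycle is supported only at primes non-split in $\kay$, it has a non-empty fibre over some $\bar{\mathbb F}_p$ with $p$ non-split, and then Theorem~\ref{counting} gives
$$
0\ne |\und{\ZZ(m)}(\bF)| = \tfrac{w_{\smallkay}}{4}\, E^*_{\mm}(\tau,0;\ph^{(p)})\,q^{-\mm}.
$$
The coherent Eisenstein series on the right has an Euler product Fourier coefficient $E^*_{\mm}(\tau,0;\ph^{(p)}) = W^*_{\mm,\infty}(\tau,0,\Phi^{(p)}_\infty)\prod_{\ell<\infty}W^*_{\mm,\ell}(0,\ph^{(p)}_\ell)$, and each local Whittaker function at $s=0$ vanishes whenever the local quadratic space $V^{(p)}_\ell$ fails to represent $\mm$ (at infinity this includes the positivity condition $\mm>0$). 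So non-vanishing forces $V^{(p)}$ to represent $\mm$ at every place.

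Now I would compare $V^{(p)}$ with $\Cal V$ place by place using the definitions: they share the same local data (as $\kay_\ell$ with a scalar multiple of the norm form) at every $\ell\ne p$, in particular they agree at $\infty$, which is positive definite in both cases. Therefore, from the fact that $V^{(p)}$ represents $\mm$ everywhere, it follows that $\Cal V_\ell$ represents $\mm$ for every place $\ell\ne p$, and moreover $\Cal V_\infty$ represents $\mm$ (so $\mm>0$). Consequently
$$
\Diff(\Cal V,\mm)\subset\{p\}.
$$

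Finally, since $\Cal V$ is incoherent, $|\Diff(\Cal V,\mm)|$ is odd and hence $\ge 1$, forcing $\Diff(\Cal V,\mm)=\{p\}$. In particular the prime $p$ at which $\ZZ(m)$ is supported is uniquely determined by $\mm$, and if $|\Diff(\Cal V,\mm)|>1$ no such $p$ can exist, so $\ZZ(m;\frak a,\l,r)$ must be empty. There is no serious obstacle here: the one point that needs care is the correct identification of the local places where $V^{(p)}$ differs from $\Cal V$, which is already laid out in the formulas for $\b^{(p,a)}_\ell$ preceding Theorem~\ref{counting}.
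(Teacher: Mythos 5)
Your proof is correct and is essentially the argument the paper leaves implicit when it says Theorem~\ref{counting} ``immediately implies'' the corollary: non-vanishing of the count forces $E^*_\mm(\tau,0;\ph^{(p)})\ne 0$, hence (by local Whittaker vanishing / the Siegel--Weil interpretation) the coherent collection $V^{(p)}$ represents $\mm$ at every place; since $V^{(p)}_\ell=\Cal V_\ell$ for all $\ell\ne p$, this gives $\Diff(\Cal V,\mm)\subset\{p\}$, and the oddness of $|\Diff(\Cal V,\mm)|$ for the incoherent $\Cal V$ pins it down to $\{p\}$. One could also see directly that $p\in\Diff(\Cal V,\mm)$ because $V^{(p)}_p$ and $\Cal V_p$ differ by the scalar $\kappa_p$ with $\chi_p(\kappa_p)=-1$, so they represent complementary sets of $\mm$; your parity argument is a clean alternative to this local check.
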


\section{\bf Computation of the length}
\label{sectLength}

We again assume that $m>0$, and, for fix a prime $p$ that is not split in $\kay$, let $O_p=\OK\tt_Z\Z_p$, and let $\pi$ be a
uniformizer in $O_p$. For $x =(E, \iota, \bbold) \in
\ZZ(m)(\bar{\mathbb F}_p)$, we want to compute the length of the
local Henselian ring $\OO_{\ZZ(m), x}$:
\begin{equation}
\lg(x) = \hbox{length of } \OO_{\ZZ(m), x} =\hbox{length of }
\widehat{\OO}_{\ZZ(m), x}.
\end{equation}
Recall, \cite{grossqc}, that  $\widehat{\Cal O}_{\Cal C,
\mathrm{pr}(x)} = W_{O_p}(\bar\F_p)$, the ring of relative Witt
vectors.

\begin{prop}\label{length.prop}  Let the notation be as above.  Then
$$\widehat{\Cal O}_{\ZZ(m), x} = W_{O_p}(\bar\F_p)/(\pi^\nu),$$
where
$$
\nu=\nu_p(m) =
\begin{cases}
\frac12\,(\ord_p(m)+1)&\text{if $p\nmid \Delta$,}\\
\nass
\ord_p(m N(\partial \partial_\lambda^{-1})) &\text{if $p\mid N(\partial)$}.
\end{cases}
$$
In particular, $\lg(x)=\nu_p(m)$ depends only on $\ord_p m$ and $\lambda$.

\end{prop}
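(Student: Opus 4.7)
The plan is to identify $\lg(x)$ as the largest integer $\nu$ such that the special quasi-endomorphism $\bbold$, together with the integrality condition $r + \bbold\lambda \in O_E$, deforms from $\bar{\mathbb F}_p$ to the quotient $W/(\pi^\nu)$ of $W := W_{O_p}(\bar{\mathbb F}_p)$, and to carry out this deformation analysis using Gross's description of the canonical lift, handling the inert and ramified cases separately.

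First I would use Gross's theorem \cite{grossqc}, already invoked in the paper, which identifies $\widehat{\mathcal O}_{\mathcal C, \mathrm{pr}(x)}$ with $W$ via the canonical lift $(\tilde E, \tilde\iota)/W$. The forgetful map $\mathcal Z(m) \to \mathcal C$ is representable by closed immersions on completions at $x$, since the additional datum $\bbold$ is cut out by the equations $\bbold \in L(E,\iota)\frak a^{-1}$, $\deg(\bbold) = m/N(\frak a)$, and $r + \bbold\lambda \in O_E$. Hence $\widehat{\mathcal O}_{\mathcal Z(m), x} = W/(\pi^\nu)$, where $\nu$ is the maximal integer such that $\bbold$ lifts to $\tilde\bbold \in L(\tilde E_\nu,\tilde\iota)\frak a^{-1}$ with $r + \tilde\bbold\lambda \in O_{\tilde E_\nu}$, writing $\tilde E_\nu := \tilde E \otimes_W W/(\pi^\nu)$.

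The heart of the computation is the filtration $L(\tilde E_\nu,\tilde\iota) \subset L(E,\iota)$ by sub-$O_{\smallkay,p}$-lattices coming from Gross's theorem on canonical liftings. In the inert case ($p \nmid \Delta$) this filtration reads $L(\tilde E_\nu,\tilde\iota) \otimes \mathbb Z_p = \delta_p \pi^{\nu-1} O_{\smallkay,p}$. Writing $\bbold = \delta_p v$ via Lemma \ref{lemY4.3}, one gets $\deg(\bbold) = -\kappa_p v\bar v$, so $\ord_p(\deg\bbold) = \ord_p(\kappa_p) + \ord_p(v\bar v) = 1 + 2\,\ord_\pi(v)$; the lifting condition $v \in \pi^{\nu-1}O_{\smallkay,p}$ then gives $\nu = \tfrac12(\ord_p(\deg\bbold) + 1)$. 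Since at an inert $p$ the local data $r_p$ and $\lambda_p$ vanish, the integrality condition is vacuous at $p$. Using the scaling lemma between Definition~\ref{ZZdef} and Definition~\ref{def.ztop} to normalize $\frak a$ to be prime to $p$, and $\deg\bbold = m/N(\frak a)$, this recovers $\nu_p(m) = \tfrac12(\ord_p(m) + 1)$.

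The ramified case is the main obstacle. Here I would combine Gross's filtration with the twisted description of $O_{E,p}$ from the lemma preceding the proposition, which involves the element $\mu_p \in O_{\smallkay,p}$ satisfying $\mu_p\bar\mu_p - \kappa_p \in p^{-1}\Delta\,\mathbb Z_p$ and realizes $O_{E,p}$ as $\{\alpha + \beta\delta_p : \alpha,\beta \in \varpi_p\partial_p^{-1},\ \alpha + \bar\mu_p\beta \in O_{\smallkay,p}\}$. Writing $\bbold = \delta_p v$, the condition $r + \bbold\lambda \in O_{\tilde E_\nu,p}$ becomes a congruence on $v\lambda_p$ modulo $\pi^\nu$, twisted by $\mu_p$, where the valuation of $\lambda_p$ in $\partial_p^{-1}\frak a_p/\frak a_p$ contributes $1 - \ord_p(\partial_\lambda)$; combining this with the Gross lifting bound produces the formula $\nu = \ord_p(mN(\partial\partial_\lambda^{-1}))$. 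The delicate step will be the bookkeeping in this combined congruence: the twist by $\mu_p$ shifts the lattice $L(\tilde E_\nu,\tilde\iota)$ in a $\lambda$-dependent way at the ramified prime, and one must verify that the contribution of $r$ produces only a shift within the same congruence class, so that $\nu$ depends on $\lambda$ only through $\partial_\lambda$ and is independent of the choices of $\frak a$ and $r$, as asserted.
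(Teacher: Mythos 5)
Your inert case is essentially sound, but it is obtained by re-deriving a special case of a result that the paper simply cites: Gross's Proposition~4.3 of \cite{grossqc} already gives the deformation locus of $(X,\iota,\xi_p)$, for $\xi_p=(r+\bbold\l)_p$ an endomorphism of the $p$-divisible group $X=E[p^\infty]$, as $\Spf\,W_{O_p}(\bar\F_p)/(\pi^\nu)$ with the \emph{single unified} formula
\[
\nu \;=\; \frac{1}{f_p}\bigl(\ord_p(\Delta\, N(\xi_p^-))-1\bigr)+1,
\]
where $\xi_p^-=\bbold\l_p$ is the anti-linear component. The factor $\ord_p(\Delta)$ built into this formula is precisely what accounts for ramification, and plugging in $N(\xi_p^-)=N(\bbold\l)=\frac{m}{N(\frak a)}N(\frak a\,\d_\l^{-1})=\mm$ finishes the proof in one line for both the inert case ($f_p=2$, $\ord_p\Delta=0$) and the ramified case ($f_p=1$, $\ord_p\Delta=1$). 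In particular the independence from $r$ is immediate, since $\xi_p^-$ does not involve $r$. You instead unpack this by hand through the filtration $L(\tilde E_\nu,\tilde\iota)_p=\delta_p\pi^{\nu-1}O_{\smallkay,p}$; that correctly reproduces the formula when $p\nmid\Delta$, and it also avoids the clean cancellation of $N(\frak a)$ in $N(\bbold\l)$ that the paper exploits, forcing you to invoke the scaling lemma to normalize $\frak a$ prime to $p$.

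The genuine gap is the ramified case, which is the substantive half of the proposition. There you do not state the filtration on $L(\tilde E_\nu,\tilde\iota)_p$, do not write down the congruence coming from the integrality condition $r+\bbold\l\in O_{\tilde E_\nu,p}$ in terms of $\mu_p$, and do not carry out the ``bookkeeping'' you flag as the delicate step — you only assert that the combined congruence ``produces the formula.'' This would-be computation must in particular establish that (a) $\nu$ depends on $\l$ only through $\d_\l$ and (b) $\nu$ is independent of $r$ even though $r_p$ appears in the integrality constraint; both claims require an argument. The paper's route — tracking the full endomorphism $\xi_p=(r+\bbold\l)_p$ of the $p$-divisible group and citing the formula of \cite[Prop.~4.3]{grossqc} — handles exactly this without any case analysis, because the deformation obstruction depends only on the anti-linear part $\xi_p^-=\bbold\l_p$, and the twist by $\mu_p$ from the explicit description of $O_{E,p}$ never enters. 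Unless you produce the ramified filtration and solve the congruence in detail, your argument does not prove the case $p\mid N(\d)$.
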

\begin{proof}  The point $x$ corresponds to a collection $(E,\iota, \bbold)$
over $\bar \F_p$ for a special quasi-endomorphism $\bbold \in
L(E,\iota)\frak a^{-1}$ with $N(\frak a)N(\bbold)=m$, and
$r+\bbold\l\in \End(E)$.

Let $X=E[p^\infty]$ be the $p$-divisible group  of
$E$. Then  $r+\bbold
\l$ determines an endomorphism $\xi_p =(r+\bbold \l)_p$ of  $X$.
 Gross's result, \cite[Proposition~4.3]{grossqc}, determines the deformation locus of
$(X,\iota,\xi_p)$ inside that of $(X,\iota)$ as $\Spf
\,W_{O_p}(\bar\F_p)/(\pi^\nu)$, where
$$\nu =
\frac{1}{f_p}(\ord_p(\Delta N(\xi_p^-))-1)+1.$$ Here
$f_p=[\kay(\mathfrak p): \mathbb F_p]$, and $\xi_p^- =\bbold
\lambda_p$ is the component of $\xi_p$ in $\delta\kay_p$. So
$$
N(\xi_p) = N(\bbold \lambda) =\frac{m}{N(\mathfrak a)} N(\mathfrak
a \partial_{\lambda}^{-1}) = \mm,
$$
and
$$
\nu=
\begin{cases}
\frac12\,(\ord_p(m)+1)&\text{if $p\nmid \Delta$,}\\
\nass
\ord_p(m \Delta \Dl^{-1}) &\text{if $p\mid \Delta$},
\end{cases}
$$
as claimed.
\end{proof}

Combining this proposition with Theorem  \ref{counting},  we obtain the following intermediate result.

\begin{theo}  \label{theodegree} For a positive integer $m>0$ with
$\Diff(\mathcal V,\mm)=\{p\}$, and any positive rational number $a>0$, then
$$
\widehat{\deg}\, \mathcal Z(m)\,  q^{\frac{am}{N(\mathfrak a)}}=
\frac{1}4 c_p(m) \log p  \cdot E_{\frac{am}{N(\mathfrak
a)}}^*(\tau, 0, \ph^{(p, a)}) .
$$
Here
$$
c_p(m) =  \begin{cases}
\ord_p(m)+1 &\text{if $p\nmid \Delta$,}\\
\nass
\ord_p(m \Delta \Dl^{-1}) &\text{if $p\mid \Delta$}.
\end{cases}
$$
\end{theo}

\section{\bf Whittaker functions and their derivatives}
\label{sectWhittaker}

  \begin{prop} \label{propY5.1}  Suppose $p \in \Diff(\mathcal V, \frac{ m}{\Dl})$. Then
  $$
  W_{\mm, p}^*( 0, \ph^{(p)}_p)\, E_{\mm}^{*, \prime}(\tau, 0, \ph)
   =W_{\mm, p}^{*, \prime}( 0, \ph_p)\, E_{\mm}^*(\tau, 0,
   \ph^{(p)}).
   $$
   Here $\ph$ is defined in (\ref{mainph}) and $\ph^{(p)}$ is defined in (\ref{phpa}).
\end{prop}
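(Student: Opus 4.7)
The plan is to exploit the Euler factorization of the Fourier coefficients together with the fact that $\ph$ and $\ph^{(p)}$ differ only at the place $p$, and then to use the vanishing of the local $p$-Whittaker function $W_{\mm,p}^*(0,\ph_p)$ forced by the incoherence condition $p\in\Diff(\Cal V,\mm)$.

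First I would record the product formula
\begin{equation*}
E^*_{\mm}(\tau,s,\ph) \;=\; W^*_{\mm,\infty}(\tau,s,\ph_\infty)\,\prod_{\ell<\infty} W^*_{\mm,\ell}(s,\ph_\ell),
\end{equation*}
and the same formula for $E^*_{\mm}(\tau,s,\ph^{(p)})$. Comparing the definitions of $\ph$ in (\ref{mainph}) and of $\ph^{(p)} = \tilde\ph^{(p,a)}$ with $a = N(\partial_\lambda^{-1}\frak a)$ in (\ref{phpa}), I observe that the two Schwartz functions agree component by component at every place $\ell \neq p$ (including at infinity, where the Gaussian is governed by the same quadratic form $Q_\infty(x) = N(\partial_\lambda^{-1}\frak a)N(x)$). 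Consequently, setting
\begin{equation*}
A(\tau,s) \;=\; W^*_{\mm,\infty}(\tau,s,\ph_\infty)\,\prod_{\substack{\ell<\infty\\ \ell\neq p}} W^*_{\mm,\ell}(s,\ph_\ell),
\end{equation*}
one has, identically in $s$,
\begin{equation*}
E^*_{\mm}(\tau,s,\ph) = W^*_{\mm,p}(s,\ph_p)\,A(\tau,s),\qquad E^*_{\mm}(\tau,s,\ph^{(p)}) = W^*_{\mm,p}(s,\ph^{(p)}_p)\,A(\tau,s).
\end{equation*}

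The crucial input is that $W^*_{\mm,p}(0,\ph_p)=0$. This is where the condition $p\in\Diff(\Cal V,\mm)$ enters: the local quadratic space $\Cal V_p$ does not represent $\mm$, and a standard local calculation (which can be read off from the local Siegel--Weil identity or from the explicit form of the Whittaker integral at the point $s_0=0$ of the weight-$1$ induced representation) forces the normalized Whittaker function to vanish there. I would cite or briefly verify this local vanishing, noting that the normalization factor $|\Delta|_p^{-(s+1)/2}L_p(s+1,\chi)$ is finite and nonzero at $s=0$, so vanishing of $W_{\mm,p}(0,\ph_p)$ passes to $W^*_{\mm,p}(0,\ph_p)$.

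Having established this, the product rule applied at $s=0$ to $E^*_{\mm}(\tau,s,\ph)=W^*_{\mm,p}(s,\ph_p)A(\tau,s)$ gives
\begin{equation*}
E^{*,\prime}_{\mm}(\tau,0,\ph) \;=\; W^{*,\prime}_{\mm,p}(0,\ph_p)\,A(\tau,0),
\end{equation*}
since the other term carries the factor $W^*_{\mm,p}(0,\ph_p)=0$. Meanwhile
\begin{equation*}
E^*_{\mm}(\tau,0,\ph^{(p)}) \;=\; W^*_{\mm,p}(0,\ph^{(p)}_p)\,A(\tau,0).
\end{equation*}
Multiplying the first identity by $W^*_{\mm,p}(0,\ph^{(p)}_p)$ and the second by $W^{*,\prime}_{\mm,p}(0,\ph_p)$ yields the desired equality
\begin{equation*}
W^*_{\mm,p}(0,\ph^{(p)}_p)\,E^{*,\prime}_{\mm}(\tau,0,\ph)\;=\;W^{*,\prime}_{\mm,p}(0,\ph_p)\,E^*_{\mm}(\tau,0,\ph^{(p)}).
\end{equation*}
The only real content is the local vanishing at $p$; everything else is bookkeeping and the product rule. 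The main obstacle, therefore, is verifying that $W^*_{\mm,p}(0,\ph_p)=0$ cleanly, which should be either a direct consequence of a general result in \cite{KYeis} or a short computation using the formulas for $W_{\mm,p}$ at a non-representing quadratic space.
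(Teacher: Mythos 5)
Your proposal is correct and follows essentially the same route as the paper: factor the $\mm$-th Fourier coefficient as a product of local Whittaker functions, observe that $\ph_\ell = \ph^{(p)}_\ell$ (and $\Cal V_\ell = V^{(p)}_\ell$) for $\ell\ne p$, use the vanishing $W^*_{\mm,p}(0,\ph_p)=0$ forced by $p\in\Diff(\Cal V,\mm)$, and apply the Leibniz rule so that only the derivative at $p$ survives. Your explicit introduction of $A(\tau,s)$ is just a clean repackaging of the same computation; the paper likewise takes the local vanishing for granted at this point and verifies it explicitly only in Proposition~\ref{Whittaker}.
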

\begin{proof} Notice first that $\mathcal V_\ell = V^{(p)}_\ell$ as quadratic spaces and $\ph_\ell
=\ph^{(p)}_\ell$ for all $\ell \ne p$. Since $p \in \Diff(\mathcal V, \mm)$,
$W_{\mm, p}^*(0, \ph_p)=0$, and so
\begin{align*}
& W_{\mm, p}^*(0, \ph^{(p)}_p) \,E_{\mm}^{*, \prime}(\tau, 0,
\ph)
\\
 &=W_{\mm, p}^*(0, \ph^{(p)}_p) \,W_{\mm, p}^{*, \prime}(0,
 \ph_p) \prod_{\ell \ne p}W_{\mm, \ell}^{*}(0,
 \ph_\ell)
 \\
  &=W_{\mm, p}^{*, \prime}(0,
 \ph_p) \prod_\ell W_{\mm, \ell}^{*}(0,
 \ph^{(p)}_\ell)
 \\
 &=W_{\mm, p}^{*, \prime}(0, \ph_p)\, E_{\mm}^*(\tau, 0,
   \ph^{(p)}).
   \end{align*}
\end{proof}

To compute the central derivative of the Whittaker functions, we
need the following few lemmas,  whose proofs can be found in
\cite{HowardYang}. Special cases can be found in \cite{KRYtiny},
\cite{YaValue}, and \cite{KYeis}.

 Let $K$ be a $p$-adic local field, and let $L$ be a
quadratic extension (including $K \oplus K$) with associated
quadratic character $\chi$. Let $\psi_K$ be a fixed unramified
character of $K$.  For $t \in O_K$, $t\ne0$, let $V_t=(L, t x \bar
x)$ be the corresponding binary quadratic space, and, for $\mu \in
\partial_{L/K}^{-1}$, let $\phi^\mu= \cha(\mu +O_L)\in S(V_t)$. Let $\pi$ be a uniformizer of $K$ and $q=|O_K/\pi|$. Let
$$
W_{m}^*(s, \phi^\mu) = |d_{L/K}|^{\frac{s+1}2} L(s+1, \chi)\,W_{m}(s, \phi^\mu)
$$
be the normalized Whittaker function.

The following is well known, see for example \cite{KRYtiny},
\cite{YaValue}.

\begin{lem}  \label{lemY5.2} Suppose that $L/K$ is unramified and that $t \in O_K^\times$.
If $m\notin O_K$, then $W_{m}^*(s, \phi^0) =0$.  If $m \in  O_K$, then
$$
\gamma(V_t)^{-1}\,W_m^*(s, \phi^0)
 = \sum_{r=0}^{\ord_K m} (\chi(\pi) q^{-s})^r.
 $$
 In particular, $W_{m}^*(0, \phi^0)=0$ if and only if $
 \chi(m)=-1$ and $\ord_K m$ is even, and, in this case,
 $$
\gamma(V_t)^{-1}\,W_m^{*, \prime}(0, \phi^0) = \frac{1}2(1+
\ord_K m) \log q.
$$
\end{lem}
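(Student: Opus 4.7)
The plan is to reduce to the direct integral definition of $W_m(s,\phi^0)$ and exploit the Iwasawa decomposition of $w^{-1}n(b)$ together with the spherical nature of $\phi^0$ in the unramified setting. Write
\begin{equation*}
W_m(s,\phi^0) = \int_K \Phi(w^{-1}n(b),s)\,\psi(-mb)\,db,
\end{equation*}
where $\Phi(\cdot,s)$ is the standard section associated to $\phi^0$. Split the integration into $|b|\le 1$ and $|b|>1$. For $|b|\le 1$, $w^{-1}n(b)\in\SL_2(O_K)$, so since $L/K$ is unramified and $t\in O_K^\times$, the vector $\phi^0$ is $\SL_2(O_K)$-fixed under $\omega_{V_t,\psi}$ and $\Phi(w^{-1}n(b),s)=\phi^0(0)=1$. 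For $|b|>1$, factor $w^{-1}n(b)=n(-b^{-1})m(b^{-1})k$ with $k\in\SL_2(O_K)$ to obtain $\Phi(w^{-1}n(b),s)=\chi(b)|b|^{-s-1}$, using $\chi^2=1$.

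The first piece contributes $\mathbf{1}(m\in O_K)$, immediately yielding $W_m^*(s,\phi^0)=0$ for $m\notin O_K$. For $m\in O_K$ with $n=\ord_K m$, decompose $\{|b|>1\}$ into annuli $|b|=q^r$ and evaluate
\begin{equation*}
\int_{|b|=q^r}\psi(-mb)\,db = q^r\bigl[\mathbf{1}(r\le n)(1-q^{-1}) - q^{-1}\mathbf{1}(r=n+1)\bigr],
\end{equation*}
via the standard $\int_{O_K^\times}\psi(cu)\,du$ computation. Setting $X=\chi(\pi)q^{-s}$, summing the geometric series gives
\begin{equation*}
W_m(s,\phi^0) = 1 + (1-q^{-1})\sum_{r=1}^n X^r - q^{-1}X^{n+1}.
\end{equation*}

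The key telescoping identity is the factorization
\begin{equation*}
(1-q^{-1}X)\sum_{r=0}^n X^r = 1 + (1-q^{-1})\sum_{r=1}^n X^r - q^{-1}X^{n+1}.
\end{equation*}
Since $L(s+1,\chi)=(1-\chi(\pi)q^{-s-1})^{-1}=(1-q^{-1}X)^{-1}$ and $|d_{L/K}|=1$, dividing gives $W_m^*(s,\phi^0)=\sum_{r=0}^n X^r$, which is the claim after noting $\gamma(V_t)=1$ in the unramified case (this normalization is precisely the compatibility between the spherical vector $\phi^0$ and the Weil index).

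For the vanishing and derivative, specialize $X|_{s=0}=\chi(\pi)$. In the unramified quadratic case $\chi(\pi)=-1$, so $W_m^*(0,\phi^0)=\sum_{r=0}^n(-1)^r$, which vanishes precisely when $n$ is odd (equivalently when $\chi(m)=-1$). Differentiating termwise,
\begin{equation*}
W_m^{*,\prime}(0,\phi^0) = -\log q\cdot \sum_{r=0}^n r\,\chi(\pi)^r,
\end{equation*}
and a short telescoping of $\sum_{r=0}^n r(-1)^r$ when $n$ is odd produces $-\tfrac12(n+1)$, giving the stated formula $\tfrac12(1+\ord_K m)\log q$. The only subtle point is verifying the normalizations so that $\gamma(V_t)=1$ and $\Phi(k,s)=1$ for $k\in\SL_2(O_K)$ are simultaneously consistent with the formula $\Phi(w^{-1},s)=\gamma(V_t)\int_{O_L}dy$; this is standard in the unramified setting but is the one place where an off-by-a-constant slip is easy, so I would double-check measure normalizations before anything else.
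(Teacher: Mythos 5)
The paper does not actually prove this lemma: it is quoted as ``well known,'' with the proof delegated to \cite{KRYtiny}, \cite{YaValue} and \cite{HowardYang}, so there is no in-paper argument to compare against. Your direct computation via the Iwasawa decomposition of $w^{-1}n(b)$ is the standard route taken in those references, and it is correct: the splitting of the integral at $|b|=1$, the annulus evaluation of $\int_{|b|=q^r}\psi(-mb)\,db$, the telescoping against $(1-q^{-1}X)=L(s+1,\chi)^{-1}$, and the specialization and differentiation at $s=0$ (using $\sum_{r=0}^{n}r(-1)^r=-\tfrac12(n+1)$ for $n$ odd) all check out, as does the vanishing of both pieces when $m\notin O_K$. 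Two remarks. First, the normalization point you flag is genuine but resolves exactly as you expect: for $L/K$ unramified, $t\in O_K^\times$ and $\psi$ unramified, $O_L$ is self-dual for $tx\bar x$, so $\phi^0$ spans a line on which $\SL_2(O_K)$ acts by a character; that character is trivial (by perfectness of $\SL_2(O_K)$, or by a direct Gauss-sum evaluation), which is precisely the statement $\gamma(V_t)=1$, and then $\Phi(w^{-1}n(b),s)=1$ for $|b|\le 1$ as you use. Strictly speaking $\omega(w^{-1})\phi^0=\gamma(V_t)\phi^0$ rather than $\phi^0$, so ``$\SL_2(O_K)$-fixed'' should read ``an $\SL_2(O_K)$-eigenvector with trivial eigencharacter,'' but this is cosmetic. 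Second, your vanishing criterion --- $W_m^*(0,\phi^0)=0$ if and only if $\ord_K m$ is odd, equivalently $\chi(m)=-1$ --- is the correct one and is the one forced by the derivative formula; the clause ``and $\ord_K m$ is even'' in the statement of the lemma is evidently a typo, since $\chi(m)=-1$ already forces $\ord_K m$ odd when $\chi$ is unramified, and your computation silently corrects it.
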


\begin{lem}  \label{lemY5.3} {\rm (}\cite[Lemma  4.6.3]{HowardYang}{\rm )} Suppose that $L/K$ is ramified, and let $N=\ord_K
m$, $c= \ord_K t$, and $X=q^{-s}$.

 (1)\  If $m
  \notin O_K$, then $W_m^*(s, \Phi_t^0)=0$.  If $m
  \in O_K$, then
  $$
\gamma(V_t)^{-1}W_m^*(s,  \phi_t^0)
 =\begin{cases}
   |t| (1-X) \sum_{n=0}^{N} (qX)^n &\hbox{if }
   N < c,
   \\
    |t|  (1-X) \sum_{n=0}^{c-1} (qX)^n +  (X^{c} + \chi(t m)
     X^{f+ N })  &\hbox{if }  N \ge c.
     \end{cases}
$$
   (2)\  Suppose that $m \in O_K$, $m\ne 0$, then
$$
\gamma(V_t)^{-1}\,W_m^*(0, \phi_t^0) = \begin{cases}
   0 &\hbox{if } N< c,
   \\
    1+\chi(m t) &\hbox{if } N \ge c.
    \end{cases}
$$
  In particular,  $W_{m}(0,
   \phi_t^0) =0$ if and only if $N < c$ or
   $\chi(tm ) =-1$. In this case
   $$
\gamma(V_t)^{-1}\,W_m^{*, \prime}(0, \phi_t^0)
 =\log q\cdot
 \begin{cases}
 q^{-c} \sum_{n=0}^{N} q^n &\hbox{if } N < c,
   \\
 \frac{1-q^{-c}}{q(1-q^{-1})} +  (N +f  -c) &\hbox{if } N \ge
  c.
\end{cases}
$$

\end{lem}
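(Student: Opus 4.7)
The plan is to compute $W_m^*(s,\phi_t^0)$ directly as a $p$-adic integral, then specialize and differentiate. By definition,
$$
W_m(s,\phi_t^0)=\int_K \Phi(wn(b),s)\,\psi_K(-mb)\,db,
$$
with $\Phi$ the standard section attached to $\phi_t^0=\cha(O_L)$. The first step is to split the integration into $b\in O_K$ and $|b|>1$. For $b\in O_K$ the matrix $wn(b)$ lies in $\SL_2(O_K)$, so
$$
\Phi(wn(b),s)=\omega(wn(b))\phi_t^0(0)=\gamma(V_t)\int_{O_L}\psi_K(bt\,N(x))\,dx
$$
is independent of $s$. For $|b|>1$ the Iwasawa decomposition $wn(b)=n(-b^{-1})m(-b^{-1})n^{-}(b^{-1})$ with $n^{-}(b^{-1})\in \SL_2(O_K)$ yields $\Phi(wn(b),s)=\chi(-b)^{-1}|b|^{-(s+1)}\omega(n^{-}(b^{-1}))\phi_t^0(0)$, which isolates the $s$-dependence in the single factor $|b|^{-(s+1)}$.

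Next I would evaluate the two inner Weil-representation integrals. For the compact piece, decomposing $O_L=O_K\oplus \varpi_L O_K$ (where $\varpi_L$ is a uniformizer of the ramified extension $L/K$) reduces $\int_{O_L}\psi_K(bt\,N(x))\,dx$ to a quadratic Gauss sum over $O_K$; this sum vanishes unless $\ord_K(bt)\ge -f$, where $f=\ord_K(\partial_{L/K})$, and in the non-vanishing range it produces a constant of absolute value $|t|^{1/2}$ times $\chi(\cdot)$. Integrating the resulting step function against $\psi_K(-mb)$ over the annuli $\ord_K b=n$ ($n\ge 0$) contributes the two boundary terms $X^c$ and $\chi(tm)X^{f+N}$ of part~(1). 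For the non-compact piece, $|b|^{-(s+1)}=q^{n(s+1)}$ on the annulus $\ord_K b=-n$, and the unramified character $\psi_K(-mb)$ truncates the resulting sum at $n\le N+1$; the assembled geometric series is $|t|(1-X)\sum_{n\ge 0}(qX)^n$ with cutoff $\min(N,c-1)$. Multiplying by the normalization $|d_{L/K}|^{(s+1)/2}L(s+1,\chi)$ (and using $L(s+1,\chi)=1$ for the ramified $\chi$) gives the case split of part~(1).

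Part~(2) is then an elementary specialization of the formula from part~(1). At $X=1$, the value $W_m^*(0,\phi_t^0)$ vanishes precisely when either the factor $1-X$ kills the truncated geometric series (case $N<c$) or when $1+\chi(tm)=0$ forces the two boundary terms to cancel (case $N\ge c$). In each such situation I would differentiate the closed form from part~(1) in $s$, using $\frac{d}{ds}X\bigr|_{X=1}=-\log q$, which reduces the derivative to evaluating a finite geometric sum at $X=1$; the two cases yield the two formulas of part~(2). The one delicate point is the ramified Gauss-sum computation on $O_L$: the contribution of the different $\partial_{L/K}$ must be bookkept uniformly through both the compact and non-compact pieces, and it is precisely the extra exponent from this integral that produces the shift $f$ in the term $X^{f+N}$ and in the summand $N+f-c$ of the derivative.
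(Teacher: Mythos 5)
The statement you are proving is not actually proved in this paper: it is cited as \cite[Lemma~4.6.3]{HowardYang}, and the text explicitly says that the proofs of Lemmas~\ref{lemY5.2}--\ref{lemY5.4} "can be found in \cite{HowardYang}." So there is no internal proof to compare against, and your attempt has to be judged on its own terms.

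Your overall strategy---splitting the Whittaker integral at $|b|=1$, recognizing that the compact part lies in $\SL_2(O_K)$ and that the non-compact part is controlled by the Iwasawa decomposition $wn(b)=n(-b^{-1})m(-b^{-1})n^-(b^{-1})$---is the standard approach and would in principle work. But there is a concrete misallocation of where the $s$-dependence lives. For $b\in O_K$, the section $\Phi(wn(b),s)$ is a \emph{standard} section restricted to the maximal compact, so the entire compact contribution $\int_{O_K}\Phi(wn(b),s)\psi_K(-mb)\,db$ is independent of $s$; after multiplying by the normalization $|d_{L/K}|^{(s+1)/2}L(s+1,\chi)$ it acquires at most a single overall power $X^{f/2}$, not two distinct powers. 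Checking against the closed form, the constant ($X^0$) coefficient of $|t|(1-X)\sum_{n=0}^{c-1}(qX)^n + X^c + \chi(tm)X^{f+N}$ (for $c\ge 1$) is $|t|=q^{-c}$, and that is what the compact piece actually produces. Your claim that the compact annuli $\ord_K b=n\ge 0$ "contribute the two boundary terms $X^c$ and $\chi(tm)X^{f+N}$" therefore cannot be right: those terms are genuinely $s$-dependent with two different exponents and must come from the annuli with $|b|>1$, where $|b|^{-(s+1)}$ supplies the $X^n$ factors and the Gauss sum $\omega(n^-(b^{-1}))\phi_t^0(0)$ truncates the sum at the thresholds $c$ and $f+N$.

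Beyond that, the write-up is a plan rather than a proof: the ramified Gauss sum on $O_L$ (the hard part, which you yourself call "the one delicate point"), the exact truncation in $n$, the bookkeeping of $|d_{L/K}|^{(s+1)/2}$ against the exponent $f$, and the passage from the geometric series to the stated $(qX)^n$ form are all asserted rather than carried out. Part~(2) would follow routinely once part~(1) is in hand, as you say, but with the compact/non-compact attribution corrected and the Gauss sum actually computed. As it stands, the argument has a gap that would not close by simply filling in routine details.
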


\begin{lem} \label{lemY5.4} {\rm(}\cite[Lemma 4.6.4]{HowardYang}{\rm)}
Let the notation be as in  Lemma \ref{lemY5.3} and  assume  that $p
\ne 2$ and that $\mu \notin O_K$. If $m \notin t \mu \bar \mu +O_K$, then $W_m^*(s,
\phi_t^\mu)=0$. If  $m \in t \mu \bar \mu +O_K$, write $c(m,
\mu)=\ord_\pi (m -t \mu \bar \mu)$. Then
$$
\gamma(V_t)^{-1}\,W_m^*(s,  \phi_t^\mu)=\begin{cases}
 |t| (1-X)
\sum_{0 \le n \le  c(m, \mu)}(qX)^n, &\hbox{if } c(m,
\mu) < c, \\
|t|  (1-X) \sum_{0 \le n  <c} (qX)^n +  X^c , &\hbox{if } c(m, \mu)
\ge c. \end{cases}
$$

In particular, $W_m^*(0, \phi_t^\mu)=0$ if and only if $c(m,
\mu) <c$. In this case,
$$
\gamma(V_t)^{-1}\,W_m'(0, \phi_t^\mu)=|t| \log q \sum_{0 \le
n \le c(m, \mu)}q^n.
$$
\end{lem}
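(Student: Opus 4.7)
The plan is to evaluate
$$
W_m(s, \phi_t^\mu) = \int_K \omega_{V_t, \psi}(w^{-1}n(b))\, \phi_t^\mu(0)\, \psi_K(-mb)\, db
$$
directly from the formulas (\ref{eqWeil}) for the Weil representation, following the pattern of Lemma \ref{lemY5.3}. The key observation is that $\phi_t^\mu$ is the translate by $\mu$ of $\phi_t^0 = \cha(O_L)$; under Fourier transform translation becomes multiplication by a character, so the computation should be very close to the $\mu=0$ case, but with an extra factor $\psi_K(bt\mu\bar\mu)$ in the integrand.

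First, I would split $K$ into a neighborhood of $0$ and its complement $|b|\ge q^{N_0}$ for $N_0$ large, and on the small-$b$ region use the Iwasawa decomposition of $w^{-1}n(b)$. On the large-$b$ region, substitute $y = \mu + z$ with $z \in O_L$ into the intertwined Weil-representation integral: the inner quadratic exponent becomes
$$
bt(\mu+z)(\bar\mu + \bar z) \;=\; bt\mu\bar\mu + bt(\mu\bar z + \bar\mu z) + bt z\bar z,
$$
and since $p \ne 2$, the $z$-linear part produces a support condition $bt\mu \in \partial_{L/K}^{-1}$ equivalent to $b$ lying in a specific fractional ideal, while the quadratic part is the usual Gauss sum over $O_L$.

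Second, combining these contributions, the effect of shifting from $\mu=0$ to $\mu \in \partial_{L/K}^{-1}\setminus O_L$ is to replace the Gauss-sum piece $X^c + \chi(tm)X^{f+N}$ of Lemma \ref{lemY5.3}(1) by the single monomial $X^c$: the twist $\psi_K(bt\mu\bar\mu - mb) = \psi_K(-b(m - t\mu\bar\mu))$ makes the residual $b$-integral detect only $c(m,\mu)=\ord_K(m - t\mu\bar\mu)$ rather than $\ord_K m$, and one of the two boundary terms is forced to vanish because $\mu \not\in O_L$ kills the "alternate" orbit of the Fourier transform. The vanishing criterion $W_m^*(0, \phi_t^\mu)=0 \iff c(m,\mu)<c$ and the formula for the derivative then fall out by evaluating, and then differentiating, the explicit polynomial in $X=q^{-s}$ at $s=0$.

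The main obstacle is book-keeping the normalizations: the Haar measure on $L$ self-dual for $Q(x)=t x\bar x$ contributes a factor $|t|$; the Weil index $\gamma(V_t)$ depends on $t$ modulo norms; and the normalization $|d_{L/K}|^{(s+1)/2}L(s+1,\chi)$ must be folded in at the end, where in the ramified case $L(s+1,\chi)=1$ so the $L$-factor plays no role but $|d_{L/K}|$ absorbs the $\pi_L$-scaling. Once these are tracked, differentiation at $s=0$ (i.e.\ at $X=1$) of the closed polynomial expression yields the stated formula $\gamma(V_t)^{-1}W_m'(0,\phi_t^\mu) = |t|\log q \sum_{0\le n\le c(m,\mu)} q^n$. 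The detailed bookkeeping for all these normalizations is carried out in \cite{HowardYang}, so here I would merely verify the structural reduction to Lemma \ref{lemY5.3} and the shift $m \mapsto m - t\mu\bar\mu$.
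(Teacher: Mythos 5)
Your proposal is correct and follows essentially the same route as the paper, which in fact gives no proof of this lemma at all but simply cites \cite[Lemma 4.6.4]{HowardYang}; the computation there is exactly the one you outline (Iwasawa decomposition for the standard section, expansion of $Q(\mu+z)$ over the coset $\mu+O_L$, with the absence of high-valuation elements in that coset killing the second boundary term $\chi(tm)X^{f+N}$ of Lemma \ref{lemY5.3} and replacing $N=\ord_K m$ by $c(m,\mu)$). Your reduction-to-the-$\mu=0$-case structure and the final evaluation/differentiation at $X=1$ are sound, and deferring the measure and $\gamma(V_t)$ normalizations to \cite{HowardYang} is no less complete than what the paper itself does.
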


   \begin{prop} \label{Whittaker} Let  $ m >0$ be a rational number, and assume that $p \in \Diff(\mathcal V, \mm)$.\hfb
(1) \ Suppose that $p$ is inert in $\kay$. If $m \notin \mathbb Z_p$, then $W_{m,p}^*(s, \ph^{(p)}) = 0$. If $m \in \mathbb Z_p$, then
   $$
   \frac{W_{\mm,p}^{*, \prime}(0, \ph_p)}{W_{\mm, p}^*(0, \ph^{(p)}_p)}  =-\frac{1}2 (1 + \ord_p m) \log p.
   $$
(2) \  If $p \ne 2 $ is ramified in $\kay$ and $r_p \notin O_{\smallkay, p}$,
$$W_{m,p}^*(s, \ph^{(p)})
  =0, \qquad\text{and}\qquad W_{\mm, p}^*(s, \ph_p) =0.$$
(3) \  Suppose that $p \ne 2 $ is ramified in $\kay$ and that $r_p \in O_{\smallkay, p}$. If $\mm \notin \mathbb Z_p$, then
    $W_{\mm ,p}^*(s, \ph^{(p)}) = 0$.  If $\mm \in \mathbb Z_p$,  then
    $$
   \frac{W_{\mm ,p}^{*, \prime}(0, \ph_p)}{W_{\mm , p}^*(0, \ph^{(p)}_p)}
   =-\frac{1}2 \ord_p( m N(\partial \partial_\lambda^{-1}))  \log p.
   $$
 (4) \ In all cases,
 $$
 W_{\mm ,p}^{*, \prime}(0, \ph_p) = -\frac{1}2\, c_p(m)\, \log p \cdot  W_{\mm , p}^*(0, \ph^{(p)}_p),
 $$
 where $c_p(m)$ is the number given in Theorem \ref{theodegree}.
   \end{prop}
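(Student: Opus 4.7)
My plan is to prove all four parts by direct local Whittaker computation, reducing in each case to the explicit evaluations of Lemmas~\ref{lemY5.2}--\ref{lemY5.4} after rewriting $\ph_p$ and $\ph^{(p)}_p$ in the form $\cha(\mu + O_L)$ on a binary space $V_t = (L, t\,x\bar x)$, and transporting between $\Cal V_p$ and $V^{(p)}_p$ via the scaling formula of Lemma~\ref{lemY3.5}. The crucial structural fact is that $\Cal V_p$ and $V^{(p)}_p$ coincide away from $p$ and differ at $p$ only by rescaling the quadratic form by $\kappa_p$, with $(\Delta,\kappa_p)_p = -1$ and $\ord_p \kappa_p$ equal to $1$ or $0$ according as $p$ is inert or ramified; thus the Weil-index ratio $\gamma(\Cal V_p)/\gamma(V^{(p)}_p)$ equals $-1$, producing the overall sign $-\tfrac12$ in the final formula.

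For part (1), $p$ is inert and $p \nmid \Delta$, so $\ph_p = \ph^{(p)}_p = \cha(\frak a_p^{-1})$; both binary spaces are unramified over $\Q_p$ and differ by rescaling by $\kappa_p \in \Z_p$ with $\ord_p \kappa_p = 1$. First I would apply Lemma~\ref{lemY3.5} to transport $W^{*}_{\mm, p}(s, \ph_p)$ on $\Cal V_p$ into a Whittaker function on $V^{(p)}_p$ at a shifted argument involving $\kappa_p$, and then invoke Lemma~\ref{lemY5.2} on both the incoherent and coherent sides. The parity flip in $\ord_p \mm$ caused by $\kappa_p$ is precisely what makes the incoherent value vanish and keeps the coherent one nonzero, and the derivative formula $\tfrac12(1+\ord_K m)\log q$ of Lemma~\ref{lemY5.2} then produces the ratio $-\tfrac12(1+\ord_p m)\log p$ after accounting for the $-1$ from the $\gamma$-ratio.

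For parts (2) and (3), $p$ is ramified, $p \mid \Delta$, and the extra cutoff factor $\cha(-\bar r_p + O_{\smallkay,p})(x\lambda_p)$ in $\ph_p$ (together with the further-modified factor involving $\mu_p$ in $\ph^{(p)}_p$) brings us into the setting of Lemmas~\ref{lemY5.3}--\ref{lemY5.4}. Unwinding the Schwartz functions gives $\ph_p = \phi^\mu_t$ and $\ph^{(p)}_p = \phi^{\mu'}_{t'}$ for $\mu = -\bar r_p\lambda_p^{-1}$ and $t$ determined by the form, with the support restriction $x\lambda_p \in \varpi_p\partial_p^{-1}$ in $\ph^{(p)}_p$ absorbed by shifting $t'$ by $\varpi_p$. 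In case (2), $r_p \notin O_{\smallkay,p}$ forces $\mu \notin O_L$, and Lemma~\ref{lemY5.4}'s compatibility $m \in t\mu\bar\mu + O_K$ fails, making both Whittaker functions vanish identically in $s$. In case (3), $r_p \in O_{\smallkay,p}$ reduces to $\phi^0_t$, Lemma~\ref{lemY5.3} applies, and the $\Diff$ hypothesis forces the branch $N \ge c$; matching $N = \ord_p\mm$ and $c, f$ with the conductor data of the form converts the formula $\tfrac{1-q^{-c}}{q(1-q^{-1})} + (N+f-c)$ into precisely $-\tfrac12 \ord_p(m\,N(\partial\partial_\lambda^{-1}))\log p$. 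Part (4) is then the uniform restatement of (1) and (3) using the formula for $c_p(m)$ from Theorem~\ref{theodegree}.

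The main obstacle will be the careful tracking of all normalizations: the factor $|\partial|^{(s+1)/2}L(s+1,\chi)$ in the definition of $W^*$, the Weil indices $\gamma(\Cal V_p)$ and $\gamma(V^{(p)}_p)$ under rescaling of the form and their ratio $-1$, and the scalar $N(\partial_\lambda^{-1}\frak a)$ that enters both spaces. Once these normalizations are carried consistently through each case, the proposition assembles directly from the three Whittaker lemmas, the real substance being entirely in the prefactors and in the parity and valuation bookkeeping for $\ord_p m$.
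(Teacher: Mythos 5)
Your proposal follows the same strategy as the paper's proof: rescale both Schwartz functions to standard binary forms $\phi^\mu_t$, read off the values and derivatives from Lemmas~\ref{lemY5.2}--\ref{lemY5.4} with the $\Diff$ hypothesis selecting the vanishing branch, and insert the sign $\gamma(\Cal V_p)/\gamma(V^{(p)}_p)=-1$ to produce the factor $-\tfrac12$. A couple of small bookkeeping points are off in the sketch --- after scaling by $\lambda_p$ the parameter in Lemma~\ref{lemY5.4} should be $\mu=-\bar r_p$ rather than $-\bar r_p\lambda_p^{-1}$, and in case (2) the function $\ph^{(p)}_p$ is actually identically zero (so its Whittaker function vanishes for the trivial reason), not merely incompatible with the $m\in t\mu\bar\mu+O_K$ condition --- but these do not alter the soundness of the approach, which is the one taken in the paper.
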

   \begin{proof} First assume that $ p \nmid \Delta$ is unramified in $\kay$.
   Then we can choose $\alpha \in \mathfrak a_p$ with $N_{\smallkay_p/\mathbb Q_p} \alpha = N(\mathfrak a \partial_\lambda^{-1})$ in $\mathbb Q_p^\times$.
   So $x \mapsto x\alpha$ gives an  isomorphism of quadratic space
   $ V_p^{(p)} =(\kay_p, -N(\mathfrak a \partial_\lambda^{-1}) x \bar x)$ to
   $\tilde{\mathcal V}_p=(\kay_p, - x \bar x)$, under which,
   $\ph_p =\cha(\mathfrak a_p^{-1})$ becomes   $\tilde \ph_p=\cha(O_{\smallkay, p})$.
   Then, by Lemma \ref{lemY5.2},  noting that $\ord_p \partial_\lambda =0$,
   \begin{align*}
   \gamma( V_p^{(p)})^{-1}\,W_{\mm, p}^*(s, \ph_p)&= \gamma(\tilde{\mathcal V}_p)^{-1}\, W_{\mm, p}^*(s,
   \tilde{\ph}_p)
   \\
    &= \sum_{0 \le r \le \ord_p m} (\chi_p(p) p^{-s})^r.
    \end{align*}
    For the same reason, $x \mapsto x\alpha$ gives  isomorphism of quadratic space
    $ V^{(p)}_p =(\kay_p, -\kappa_p N(\mathfrak a \partial^{-1}) x \bar x)$ to
    $\tilde{\mathcal V}_p=(\kay_p, - \kappa_p x \bar x)$, under which,
    $\ph^{(p)}_p =\cha(\mathfrak a_p^{-1})$ becomes   $\tilde{\ph}^{(p)}_p=\cha(O_{\smallkay, p})$. So Lemma \ref{lemY5.2} gives
    $$
    \gamma(V^{(p)}_p)^{-1}\,W_{\mm, p}^*(0, \ph^{(p)}_p)
    = \gamma(\tilde{\Cal V}_p)^{-1}\,W_{\mm, p}^*(0, \tilde{\ph}^{(p)}_p) =\cha(\mathbb Z_p)(m).
    $$
    Here we have used the fact that $p \in \Diff(\mathcal V, \mm)$, which implies that
    $\chi_p(\mm) =-1$.
    So $W_{\mm, p}^*(0, \ph^{(p)}_p)\ne 0$ if and only if $m\in \mathbb Z_p$, and, in this case, we have
     $$
   \frac{W_{\mm,p}^{*, \prime}(0, \ph_p)}{W_{\mm, p}^*(0, \ph^{(p)}_p)}
    =-\frac{1}2 (1 + \ord_p m) \log p,
   $$
where the negative sign comes from the fact
$$
\gamma(V^{(p)}_p) =-\gamma(\mathcal V_p).
$$

   Next, we assume that $p\ne 2$ is ramified. Then  $\partial_p =\varpi \,O_{\smallkay, p}$, and so
   \begin{align*}
   \ph^{(p)}_p(x) &= \cha(\mathfrak a_p^{-1})(x)\cdot \cha(\varpi_p \partial^{-1})(x\lambda_p)\cdot
      \cha(-\bar{r}_p + O_{\smallkay, p}) (\mu_p x \lambda_p)
      \\
       &= \begin{cases} 0 &\hbox{if }  \ord_p r_p <0,
       \\
         \cha(O_{\smallkay, p})(x\lambda_p) &\hbox{if } \ord_p r_p =0 .
         \end{cases}
         \end{align*}
         On the other hand,
        \begin{align*}
  \ph_p&=\cha(\mathfrak a_p^{-1})(x)\cdot
      \cha(-\bar r_p + O_{\smallkay, p}) ( x \lambda_p)
      \\
      &=\begin{cases}
\cha(\mathfrak a_p^{-1})(x)\cdot
      \cha(-\bar r_p + O_{\smallkay, p}) ( x \lambda_p) &\hbox{ if }  \ord_p r_p <0,
        \\
         \cha(O_{\smallkay, p})(x \lambda_p)  &\hbox{ if } \ord_p r_p =0 .
         \end{cases}
         \end{align*}

To prove  (2), we first assume that $r_p \notin O_{\smallkay, p}$. This implies
 that $\lambda_p$ generates $\partial_p^{-1} \mathfrak a_p$,  i.e., that $p |\Dl$. In
 this case,
 $x \mapsto x \lambda_p$ gives an isomorphism  from $\mathcal V_p$ to $(\kay_p, - \frac{N(\mathfrak a \partial_\lambda^{-1})}{\lambda_p \bar{\lambda}_p} x \bar x)$, under which $\ph_p$ becomes
      $\tilde{\ph}_p=\cha(-\bar r_p + O_{\smallkay, p})$. Set $t =- \frac{N(\mathfrak a \partial_\lambda^{-1})}{\lambda_p \bar{\lambda}_p}$, and note that $\ord_p t
      =0$.
     Lemma \ref{lemY5.4}
      shows that
      $W_{m, p}^*(s, \ph_p) =0$ unless
      $$
      \mm - t (r_p \bar r_p) \in  \mathbb Z_p,
 $$
   which  is impossible since $p \in \Diff(\mathcal V, \mm)$.

    To prove (3), we next assume that $r_p \in O_{\smallkay, p}$.  In this
     case,  the map $x \mapsto x \lambda_p$ gives an isomorphism
     between $\mathcal V_p$ and $(\kay, - \frac{N(\mathfrak
     a \partial_\lambda^{-1})}{\lambda_p \bar{\lambda}_p} x \bar x)$, under which
     $\ph_p$ becomes $\cha(O_{\smallkay, p})$.  Set $t =\frac{N(\mathfrak
     a \partial_\lambda^{-1})}{\lambda_p \bar{\lambda}_p} \in \mathbb Z_p^\times$. Then
     Lemma \ref{lemY5.3}  shows that
 $W_{\mm, p}^*(s, \ph_p)=0$ unless $\mm \in \mathbb Z_p$. In that case, the same lemma shows that
$$
  \gamma(\mathcal V_p)^{-1}\,W_{\mm,p}^{*, \prime}(0, \ph_p)
     = \ord_p (\mm \Delta ) \log p,
     $$
     and
     $$
\gamma(V^{(p)}_p)^{-1}\,W_{\mm, p}^*(0, \ph^{(p)}_p)
  =
    2.
   $$
   Since
   $
   \gamma(\mathcal V_p) =-\gamma(V^{(p)}_p),
   $ the proposition is proved.
 \end{proof}

{\bf Proof of Theorem \ref{maintheo1} for $m>0$.}   
We may assume that $m$ is an integer and
$\Diff(\mathcal V, \mm)=\{p\}$,  since otherwise
both sides are zero. Furthermore, we assume $r_p \in O_{\smallkay,
p}$ and $ \mm \in \mathbb Z_p$, since otherwise
both sides are zero by Proposition \ref{Whittaker}.  Under these
conditions,  $W_{\mm, p}^*(0,
\ph^{(p)}_p) \ne 0$ by  Proposition \ref{Whittaker}.  Now Theorem
\ref{theodegree} with $a=N(\mathfrak a \partial_\lambda^{-1})$
gives
$$
\widehat \deg\, \mathcal Z(m)\, q^{\mm}
   =\frac{1}4\,  c_p(m)\, E_{\mm}^*(\tau, 0,  \ph^{(p)}).
   $$
   On the other hand,  Propositions \ref{propY5.1} and \ref{Whittaker} give
\begin{align*}
E_{\mm}^{*, \prime}(\tau, 0, \ph)
   &=
  \frac{W_{\mm, p}^{*, \prime}( 0,
\ph_p)  }{W_{\mm, p}^*(0, \ph^{(p)}_p) }\, E_{m}^*(\tau, 0,
\ph^{(p)})
\\
 &=-\frac{1}2\, c_p(m)\, E_{m}^{*, \prime}(\tau, 0, \ph)
 \\
  &= -2 \, \widehat \deg\, \mathcal Z(m)\, q^{\mm}.
  \end{align*}
as claimed.

\begin{rem} Instead of $\mathcal V$ and $\ph$, we can use $\mathcal V^{(a)}$ and $\ph^{(a)}$ for any rational number $a>0$. The only problem is  that
the identity
$$
\frac{W_{\frac{am}{N(\mathfrak a)}, p}^{*, \prime}(0,  \ph^{(a)}_p)}{W_{\frac{am}{N(\mathfrak a)}, p}^*(0, \ph^{(p, a)}_p)}
  =\frac{1}2\, c_p(m)\, \log p
$$
 is not true in general. One can fix this by using a modified Eisenstein series, as was done in  \cite{KRYbook}.
 Let $S$ be the set of primes dividing either the numerator or denominator of $\frac{a \Dl}{N(\mathfrak a)}$, then there are
 holomorphic functions  $c_p(s)$ with $c_p(0) =0$ and coherent Eisenstein series $E^*(\tau, s, \ph^{(p), \prime})$ such that the modified Eisenstein series
 $$
 \tilde E(\tau, s,  \ph^{(a)}) = E^*(\tau, s, \ph^{(a)}) + \sum_{p \in S} c_p(s) E^*(\tau, s, \ph^{(p), \prime})
 $$
 satisfies
 $$
 \widehat{\deg} \mathcal Z(m; \mathfrak a, \lambda, r)\, q^{\frac{am}{N(\mathfrak a)}} =\tilde E_{\frac{am}{N(\mathfrak a)}}'(\tau, 0, \ph^{(a)}).
 $$
\end{rem}

\section{\bf The case $m<0$}
\label{sectY6}

\newcommand{\VV}{V^{(\infty)}}

In this section, we prove Theorem~\ref{maintheo1} in the case $m<0$.
This case is simpler than the case $m>0$ since the `length' is artificially
defined to be independent of the `points' $(E, \iota, \bbold)$.
The calculation of $|Z_\mathbb C(m)|$ can be dealt exactly the same way as
in Section  \ref{sectCounting}, so we only give sketch of the
proof.

Let $\VV =(\kay_\A, - N(\mathfrak a \partial_\lambda^{-1}) x \bar x)$ be the coherent
quadratic space over $\A$. Notice that $\VV$ differ from $\mathcal
V$ at exact the prime $\infty$.

We define $\ph^{(\infty)}=\prod \ph^{(\infty)}_\ell \in S(\VV)$ as
follows.
  \begin{equation}
  \ph^{(\infty)}_\ell(x) = \begin{cases}
     \cha(\mathfrak a_\ell^{-1})(x) &\hbox{if } \ell \nmid \Delta \infty,
     \\ \nass
      \cha(\mathfrak a_\ell^{-1})(x)\cdot \cha(-\bar r_\ell + O_{\smallkay, \ell})(x \lambda_\ell) &\hbox{if } \ell |\Delta ,
      \\ \nass
      e^{-2 \pi  N (\mathfrak a \partial_\lambda^{-1}) x \bar x} &\hbox{if }  \ell =\infty.
      \end{cases}
      \end{equation}
     Then we have
     \begin{prop}
      $$
      |Z_\mathbb C(m)| \, q^{\mm}= \frac{w_{\smallkay}}{4}\, E_{\mm}^*(\tau, 0,
      \ph^{(\infty)}).
      $$
     \end{prop}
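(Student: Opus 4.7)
The plan is to mirror the proof of Proposition~\ref{propY3.2}, with the archimedean place now playing the role of the non-split prime $p$. First, I would fix any $(E,\iota)\in\Cal C(\mathbb C)$ and set $V^{E^\top} = L(E^\top,\iota)\otimes_{\Z}\Q$, a one-dimensional $\kay$-vector space equipped with its negative definite quadratic form $Q(j) = -j^2$. Define a Schwartz function $\ph^{E^\top} = \otimes_\ell \ph^{E^\top}_\ell \in S(V^{E^\top}(\A))$ by the obvious archimedean analogue of (\ref{propY4.2}): at finite $\ell$ use the same lattice and coset conditions with $L(E,\iota)$ replaced by $L(E^\top,\iota)$, and at infinity take the standard Schwartz Gaussian $\ph^{E^\top}_\infty(j) = e^{-2\pi|Q(j)|}$ for the negative definite space.

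Next, the Siegel--Weil formula identifies $E(\tau,0;\ph^{E^\top})$ with the theta integral $I(\tau,\ph^{E^\top})$ over $H(\Q)\backslash H(\A)$, where $H = \SO(V^{E^\top})\cong \kay^1$. Unfolding $I_\mm$ and appealing to the simply transitive action of $\CL(\kay)$ on $\Cal C(\mathbb C)$ via the Serre construction, exactly as in the proof of Proposition~\ref{propY3.2}, yields
$$
I_\mm(\tau,\ph^{E^\top})\,q^{-\mm} \;=\; C\cdot |\und{\ZZ^\top(m)}|,
$$
with the normalization $C^{-1} = \tfrac{w_{\smallkay}}{4}\,\Lambda(1,\chi)$ determined by setting the theta kernel equal to $1$.

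Finally, I would identify the coherent collection attached to $V^{E^\top}$ with $\VV$. At each finite place $\ell$, writing $L(E^\top,\iota)\otimes\Z_\ell = \delta_\ell\,O_{\smallkay,\ell}$ in the style of Lemma~\ref{lemY4.3} produces a local isometry $V^{E^\top}_\ell\cong\VV_\ell$ after a rescaling in $\Q_\ell^\times$, and at $\infty$ both spaces are negative definite. Lemma~\ref{lemY3.5} and Proposition~\ref{propY3.6} then give
$$
|\und{\ZZ^\top(m)}|\,q^{\mm} \;=\; C^{-1}\,I_\mm(\tau,\ph^{E^\top}) \;=\; \tfrac{w_{\smallkay}}{4}\,E^*_\mm(\tau,0;\ph^{(\infty)}),
$$
as claimed. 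The only real bookkeeping is tracking Weil indices and normalizations during the finite-place rescaling, but this is identical to the calculation that passed from $\ph^E$ to $\tilde\ph^{(p)}$ at the end of Section~\ref{sectCounting}, with $p$ replaced by $\infty$; no new ingredient is required.
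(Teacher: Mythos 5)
Your proposal is correct and follows essentially the same route as the paper: Siegel--Weil for the definite binary space, unfolding the theta integral via the simply transitive action of $\CL(\kay)$ on $\Cal C(\C)$ through the Serre construction, and the rescaling of Proposition~\ref{propY3.6} to pass to $\ph^{(\infty)}$. The only cosmetic difference is that the paper fixes the explicit model $(E,\iota)=((\kay\otimes_\Q\R)/\OK,\iota)$ and observes $L(E^\top,\iota)=j_0\OK$ with $j_0^2=1$, so that the finite-place identification requires no local rescaling at all.
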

\begin{proof} (sketch) Fix $(E, \iota) = ((\kay \otimes_{\mathbb Q} \mathbb R) /\OK, \iota) \in \mathcal C(\mathbb C)$ with
the fixed embedding of $\kay$ into $\mathbb C$ giving the complex
structure on $\kay \otimes_{\mathbb Q} \mathbb R$. Let
$j_0(r\otimes x) = \bar r \otimes x$. Then $j_0 \in L(E^{\Top},
\iota)$, and $j_0^2=1$. Moreover, $L(E^{\Top}, \iota) =j_0
\OK$. Since $\CL(\kay)$ acts on $\mathcal C(\mathbb C)$
simply transitively,  the rest is exact the same argument as in
Section \ref{sectCounting}.
\end{proof}

 \begin{prop}  (1) \  For $m <0$, one has
 $$
W_{\mm, \infty}^*(\tau, 0, \ph^{(\infty)}_\infty)\, E_{\mm}^{*, \prime}
(\tau, 0,  \ph) =W_{\mm, p}^{*, \prime}(\tau, 0, \ph_\infty)\,
      E_{\mm}^*(\tau, 0, \ph^{(\infty)}).
 $$
(2) \  For $m<0$,  $W_{\mm, \infty}^*(\tau, 0,
\ph^{(\infty)}_\infty)\ne 0$, and
$$
\frac{W_{\mm, p}^{*, \prime}(\tau, 0,
\ph_\infty)}{W_{\mm, \infty}^*(\tau, 0,
\ph^{(\infty)}_\infty)} =-\frac{1}2 \beta_1(4 \pi |\mm|
v).
$$
 \end{prop}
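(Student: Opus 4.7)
The plan is to treat the two parts separately, following the template of Proposition~\ref{propY5.1} for part~(1) and performing the standard archimedean Whittaker function computation for part~(2).

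For part~(1), I would first observe that the collections $\mathcal V$ and $V^{(\infty)}$ agree at every finite prime, so $\ph_\ell = \ph^{(\infty)}_\ell$ for all $\ell<\infty$. The incoherent $\mathcal V_\infty = \kay_\R$ is positive definite with $Q_\infty(x) = N(\d_\l^{-1}\frak a) N(x)$, whereas $V^{(\infty)}_\infty$ is the corresponding negative definite space; thus $\mathcal V_\infty$ cannot represent the negative number $\mm$, i.e.\ $\chi_\infty(-\mm)=+1$, so $\infty \in \Diff(\mathcal V,\mm)$. This forces $W_{\mm,\infty}^*(\tau,0,\ph_\infty)=0$. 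In the Euler product
$$E_{\mm}^*(\tau, s,\ph) = W_{\mm,\infty}^*(\tau,s,\ph_\infty) \prod_{\ell<\infty} W_{\mm,\ell}^*(s,\ph_\ell),$$
only the archimedean factor vanishes at $s=0$, so
$$E_{\mm}^{*,\prime}(\tau,0,\ph) = W_{\mm,\infty}^{*,\prime}(\tau,0,\ph_\infty) \prod_{\ell<\infty} W_{\mm,\ell}^*(0,\ph_\ell).$$
The analogous factorization of $E_{\mm}^*(\tau,0,\ph^{(\infty)})$ has exactly the same finite part, since $\ph_\ell = \ph^{(\infty)}_\ell$ for $\ell<\infty$. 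Cross-multiplying the two identities yields the claim.

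For part~(2), I would compute the two archimedean Whittaker functions explicitly from the integral formula for the Weil representation. Set $c = N(\d_\l^{-1}\frak a)$. On the coherent negative definite space $V^{(\infty)}_\infty$ with $Q^{(\infty)}_\infty(x) = -c\, N(x)$, the Gaussian $\ph^{(\infty)}_\infty(x) = e^{-2\pi c\, x\bar x}$ is the standard normalized eigenfunction of weight $1$, and the standard formula (see \cite{KYeis}) gives a nonzero value of the form $W_{\mm,\infty}^*(\tau,0,\ph^{(\infty)}_\infty) = C\, q^{\mm}$ for $\mm<0$, since a negative definite binary space represents every negative number. On the incoherent positive definite space $\mathcal V_\infty$, the same Gaussian $\ph_\infty$ produces a Whittaker function whose Fourier integral cannot support $\mm<0$, so it vanishes at $s=0$; expanding to first order in $s$ converts the resulting integral into the exponential integral and yields
$$W_{\mm,\infty}^{*,\prime}(\tau,0,\ph_\infty) = -\tfrac{1}{2}\,\beta_1(4\pi|\mm|v)\cdot C\, q^{\mm}.$$
Dividing gives the asserted ratio, and incidentally shows $W_{\mm,\infty}^*(\tau,0,\ph^{(\infty)}_\infty)\ne 0$.

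The main obstacle is the bookkeeping in the archimedean integrals of part~(2): one must be careful with the normalizations (the factor $v^{-1/2}L_\infty(s+1,\chi)$, the weight-$1$ eigenfunction, and the Weil indices $\gamma(\mathcal V_\infty) = -\gamma(V^{(\infty)}_\infty)$) to recover the sign $-\tfrac12$ and the exact argument $4\pi|\mm|v$ of $\beta_1$. These are exactly the same computations already carried out in \cite{KRYtiny} and \cite{KYeis} for the weight-$1$ incoherent Eisenstein series, so the result is essentially a reference once the identifications are made. Part~(1), by contrast, is purely formal once one recognizes $\infty$ as the unique element of $\Diff(\mathcal V,\mm)$.
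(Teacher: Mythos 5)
Your proposal is correct and follows essentially the same route as the paper: part (1) is the formal cross-multiplication of Euler products exactly as in Proposition~\ref{propY5.1}, using that $\infty\in\Diff(\mathcal V,\mm)$ for $m<0$ and that $\ph_\ell=\ph^{(\infty)}_\ell$ at all finite places, and part (2) reduces to the archimedean Whittaker computations of \cite{KRYtiny} (Proposition~2.6) together with the rescaling of Lemma~\ref{lemY3.5} and the sign $\gamma(\mathcal V_\infty)=-\gamma(V^{(\infty)}_\infty)$, which is precisely how the paper obtains the factor $-\tfrac12$.
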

\begin{proof} (1) is exactly the same as Proposition \ref{propY5.1}.

(2)  Notice that $\VV_{\infty} = -\mathcal V_\infty$. Thus, as in
Lemma \ref{lemY3.5} and \cite[Proposition 2.6]{KRYtiny}
$$
\gamma(\VV_{\infty})^{-1}\,W_{\frac{m}{\Delta(\lambda)},
\infty}^*(\tau, 0, \ph^{(\infty)}_\infty)\,
q^{-\frac{m}{\Delta(\lambda)}}
 = \gamma(\mathcal V_\infty)^{-1}\,W_{-\frac{m}{\Delta(\lambda)}, \infty}^*(\tau, 0, \ph_\infty)\, q^\frac{m}{\Delta(\lambda)}
 =2.
$$
  By the
same proposition, one has
$$
\gamma(\mathcal V_\infty)^{-1}\,W_{\mm, \infty}^{*, \prime}(\tau, 0, \ph_\infty)\,
q^{-\mm} =\beta_1(4 \pi |\mm| v)
$$
This proves (2) since
$
\gamma(\mathcal
 V_\infty) = -\gamma(\VV_{\infty}).
$

{\bf Proof of Theorem \ref{maintheo1} for $m<0$}. Now the proof for
the case $m<0$ is the same as that of $m>0$. We leave the details
to the reader.
\end{proof}

\medskip
\medskip
\medskip

\centerline{\Large\bf Part II}

\section{\bf Maximal orders and optimal embeddings}\label{section.maxorders}

In this section, we summarize the results we need concerning maximal orders in
a quaternion algebra $B$ over $\Q$
with an optimal embedding of $\OK$.
Of course, some of this material is rather classical and well known, \cite{gross.zagier.singular}, \cite{dorman.1},
\cite{dorman.2}, but we need certain detailed information for which we found no good reference.
In this section, we do not assume that $B$ is indefinite.

Let $\Delta$ be the discriminant of $\OK$, and let $\chi$ be the Dirichlet character associated to $\kay$.
Let $D=D(B)$ be the product of the primes that ramify in $B$ and write $D(B) = D_0D_1$, where $D_1$
is the product of the primes which ramify in both $\kay$ and $B$. The primes dividing $D_0$ are all inert in $\kay$.
Let $\d$ be the
different of $\kay/\Q$ and let $\d_1\mid \d$ be the factor with $N(\d_1)=D_1$.
Write $\d=\d_1\d_2$, and note that,
if $2$ is ramified in both $B$ and $\kay$, then $\d_1$ and $\d_2$ are not
relatively prime.
Let $\Delta_2=N(\d_2)$.

Fix an embedding $i:\kay\rightarrow B$ and write
\begin{equation}\label{Bbasis}
B = \kay+\kay\,\delta, \qquad \text{with}\
\delta^2=\kappa \in \Q^\times,
\end{equation}
where $\delta \,\a = \bar\a\,\delta$ for $\a\in \kay$.  The element $\delta$ is unique up to scaling by an element of $\kay^\times$.
Note that  $(\kappa,\Delta)_p = \inv_p(B)$.
We write $[\a,\b] = \a+\b\delta$ and  obtain an embedding
\begin{equation}\label{Bembed}
B\hookrightarrow M_2(\kay), \qquad [\a,\b]
\mapsto \begin{pmatrix} \a&\b\\\kappa\bar\b&\bar\a
\end{pmatrix}.
\end{equation}
Note that we have an isomorphism
\begin{equation}\label{equiiso}
B\isoarrow \kay^2, \qquad [\a,\b] \mapsto \begin{pmatrix} \b\\ \bar\a\end{pmatrix}
= \begin{pmatrix} \a&\b\\\kappa\bar\b&\bar\a
\end{pmatrix}\,\begin{pmatrix} 0\\ 1\end{pmatrix},
\end{equation}
equivariant for the left action of $B$ and such that
$$[\a,\b]\,[\a_0,0] \mapsto \bar\a_0 \begin{pmatrix} \b\\ \bar\a\end{pmatrix},$$
i.e., conjugate linear for the right action of $\kay$.  Note that this action is the restriction to $B\tt\kay$
of the action of $B\tt B$ on $B$ defined by
$b_1\tt b_2: x \mapsto b_1\,x\, b_2^{\iota}.$

Let $\Max_B(\OK)$ be the set of maximal orders $O_B$ in $B$ with $i^{-1}(O_B)=\OK$.
Such orders can be described as follows.  Consider pairs $(\frak a, \l)$ where $\frak a$ is
a fractional ideal with
\begin{equation}\label{akapparelation}
N(\frak a) =\kappa\frac{|\Delta|}{D},
\end{equation}
and  $\l$ is a generator of the cyclic $\OK$-module $\d_2^{-1}\frak a/\frak a$ such that
\begin{equation}
N(\l)\equiv \kappa \text{ in }  \Delta_2^{-1}N(\frak a)/N(\frak a).
\end{equation}
For a given pair $(\frak a,\l)$, let
\begin{equation}\label{classicalmax}
O_{\frak a,\l,B} = \{\ [\a,\b]\mid  \a\in \d_2^{-1}, \ \b\in \frak a^{-1}, \
\a+\l\b \equiv 0\  \text{in}\  \d_2^{-1}/\OK\ \}.
\end{equation}

The following result is a slight generalization of the description of maximal orders given
\cite{dorman.1} and \cite{dorman.2}.

\begin{prop}\label{maxorders}
(i)  $O_{\frak a,\l,B}$ is a maximal order in $\Max_B(\OK)$.\hfb
(ii) Every maximal order in $\Max_B(\OK)$ has the form
$O_B= O_{\frak a,\l,B}$ for some $\frak a$ and $\l$. \hfb
(iii) The finite id\`eles\footnote{imbedded in $(B\tt\A_{\Q,f})^\times$ via $i$} $\A^\times_{\smallkay,f}$ act on the set $\Max_B(\OK)$
by conjugation,
$$O_B \mapsto \bbbold\, O_B \bbbold^{-1}=\bbbold\, \widehat O_B \bbbold^{-1}\cap B,$$
where $\bbbold\in \A^\times_{\smallkay,f}$ and $\widehat O_B = O_B\tt_Z\widehat \Z$.
Explicitly,
$$\bbbold\, O_{\frak a,\l,B} \,\bbbold^{-1} = O_{\frak a_\bbbold,\l_\bbbold,B},$$
where
$$\frak a_\bbbold = \frak b \bar{\frak b}^{-1}\frak a, \qquad\text{and}\qquad
\l_{\bbbold} = \bbbold \bar\bbbold^{-1}\l,$$
and $\frak b$ is the fractional ideal determined by $\bbbold$.
This action is transitive. \hfb
(iv)  Suppose that $\delta$ is replaced by $\delta'=\b_0\delta$, for $\b_0\in \kay^\times$. Then,
$$O_B =O_{\frak a,\l,B} = O_{\frak a',\l',B}$$
where $\frak a' = \b_0\frak a$, and $\l'=\b_0\l$.
\end{prop}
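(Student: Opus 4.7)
My plan is to prove the four parts in order, using a local--global framework throughout.

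For (i), I first check that $O := O_{\frak a,\l,B}$ is a full $\Z$-lattice in $B$ of rank $4$: the map $O\to \frak a^{-1}$, $[\a,\b]\mapsto \b$, is surjective (given $\b\in\frak a^{-1}$, the element $\a:=-\l\b$ lies in $\d_2^{-1}$ and makes $[\a,\b]\in O$), and has kernel $i(\OK)$, giving the exact sequence $0\to i(\OK)\to O\to \frak a^{-1}\to 0$. Closure under multiplication uses the product formula $[\a_1,\b_1][\a_2,\b_2]=[\a_1\a_2+\kappa\b_1\bar\b_2,\ \a_1\b_2+\b_1\bar\a_2]$: the $\a$-component lies in $\d_2^{-1}$ because $\kappa\b_1\bar\b_2\in\kappa N(\frak a)^{-1}\OK=(D/|\Delta|)\OK\subset \d_2^{-1}$, the $\b$-component lies in $\frak a^{-1}$ automatically, and the residual congruence is precisely what the normalization $N(\l)\equiv\kappa$ mod $\Delta_2^{-1}N(\frak a)/N(\frak a)$ is designed to guarantee. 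Maximality is then local: I identify $O_\ell$ at each prime $\ell$ with a standard maximal order containing $\OK_\ell$, cross-classifying by the behavior of $\ell$ in $\kay$ and in $B$; equivalently, one sees that the reduced discriminant of $O$ equals $D$ precisely thanks to $N(\frak a)=\kappa|\Delta|/D$. Part (ii) then runs in reverse: given $O_B\in\Max_B(\OK)$, the projection to the $\b$-component in the decomposition $B=\kay+\kay\delta$ produces an $\OK$-lattice $\frak a^{-1}\subset\kay$ with kernel $i(\OK)$, and the resulting extension class gives an $\OK$-linear map $\frak a^{-1}\to\kay/\OK$ whose image is a cyclic submodule $\d_\l^{-1}/\OK$, from which $\l$ is read off as a generator of $\d_\l^{-1}\frak a/\frak a$. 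Maximality of $O_B$ then forces $\d_\l=\d_2$ and $N(\frak a)=\kappa|\Delta|/D$, showing $O_B=O_{\frak a,\l,B}$.

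For (iii), I begin with the local conjugation computation: from $\delta\a=\bar\a\delta$ one obtains $\bbbold_\ell\delta\bbbold_\ell^{-1}=(\bbbold_\ell/\bar\bbbold_\ell)\,\delta$ for $\bbbold_\ell\in\kay_\ell^\times$, hence $\bbbold_\ell[\a,\b]\bbbold_\ell^{-1}=[\a,\,(\bbbold_\ell/\bar\bbbold_\ell)\b]$. Assembling place by place and translating the adelic ratio $\bbbold/\bar\bbbold$ into the ideal ratio $\frak b\bar{\frak b}^{-1}$ yields the stated transformation formulas for $\frak a_\bbbold$ and $\l_\bbbold$. Transitivity of the $\A^\times_{\smallkay,f}$-action on $\Max_B(\OK)$ I then reduce to the local statement that at each prime $\ell$ the set $\Max_{B_\ell}(\OK_\ell)$ is a single $\kay_\ell^\times$-conjugacy orbit: when $B_\ell$ is a division algebra there is a unique maximal order; when $B_\ell$ is split, the maximal orders containing $\OK_\ell$ form a chain (an apartment in the building) on which $\kay_\ell^\times$ acts transitively by translation.

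For (iv), the claim is direct substitution: $\delta'=\b_0\delta$ satisfies $\delta'^2=N(\b_0)\kappa=:\kappa'$, and rewriting $\a+\b\delta=\a+(\b/\b_0)\delta'$ shows the pair $(\b_0\frak a,\b_0\l)$ satisfies all the defining axioms relative to $\delta'$, including the new norm condition $N(\b_0\frak a)=N(\b_0)N(\frak a)=\kappa'|\Delta|/D$. The main obstacle in the entire proof is the local analysis at primes $\ell$ simultaneously ramified in $\kay$ and in $B$, and especially $\ell=2$, where $\d_1$ and $\d_2$ need not be coprime and both the multiplicative closure in (i) and the classification underlying transitivity in (iii) must be checked against the explicit local presentation of the ramified maximal order in a ramified local quaternion algebra; the formula (\ref{classicalmax}) is engineered precisely to accommodate this delicate case.
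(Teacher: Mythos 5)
The paper itself does not present a proof of Proposition~\ref{maxorders}: it cites Dorman's papers as the source and refers to the statement as a ``slight generalization'' of those results. So there is nothing in the text to compare your approach against directly. Your overall local--global strategy is the natural one, and parts (iv) and the conjugation formula in (iii) are essentially correct (apart from a $\bbbold\leftrightarrow\bbbold^{-1}$ convention that is easily adjusted).

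However, there is a genuine gap in your argument for part (i). You assert that the $\a$-component of a product $[\a_1,\b_1][\a_2,\b_2]$ lies in $\d_2^{-1}$ ``because $\kappa\b_1\bar\b_2\in\kappa N(\frak a)^{-1}\OK=(D/|\Delta|)\OK\subset\d_2^{-1}$,'' but the last containment is false whenever $\d_2\neq\OK$. Using (\ref{akapparelation}) and $|\Delta|=D_1\Delta_2$, one has $D/|\Delta|=D_0/\Delta_2$, and since $\Delta_2\OK=\d_2\bar\d_2=\d_2^2$ while $D_0$ is coprime to $\Delta$, the ideal $(D/|\Delta|)\OK=D_0\,\d_2^{-2}$ has valuation $-2$ at every prime $\p\mid\d_2$, whereas $\d_2^{-1}$ has valuation $-1$ there. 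Concretely, already for $\kay=\Q(\sqrt{-3})$ and $D(B)$ coprime to $3$, the element $D_0/3$ does not lie in $\d^{-1}$. Thus neither $\a_1\a_2\in\d_2^{-2}$ nor $\kappa\b_1\bar\b_2\in D_0\d_2^{-2}$ is individually in $\d_2^{-1}$; the correct argument must invoke the congruence condition $\a_i\equiv-\l\b_i\pmod\OK$ to show that the two valuation-$(-2)$ contributions cancel at every $\p\mid\d_2$. Setting $\a_i=-\l\b_i+c_i$ with $c_i\in\OK$, after absorbing the terms already in $\d_2^{-1}$ one is reduced to showing $\b_1(\l^2\b_2+\kappa\bar\b_2)\in\d_2^{-1}$, and it is here that the normalization $N(\l)\equiv\kappa\pmod{N(\frak a)}$ must be deployed, prime by prime at $\p\mid\d_2$ --- not merely as a check on ``the residual congruence'' of the product, as you phrase it, but as the sole mechanism producing the containment of the $\a$-component. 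The same gap affects the discriminant/maximality computation you invoke, since the reduced-norm calculation $N(\a)-\kappa N(\b)\in\Z$ also rests on this cancellation. You flag the simultaneous-ramification case as delicate at the end, but the problem arises already at every prime ramified in $\kay$ and \emph{un}ramified in $B$, which is the generic situation.
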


Let
\begin{equation}
\L(\frak a, \kappa, \Delta_2) = \{ \ \l\in \d_2^{-1}\frak a/\frak a \mid  \text{$\l$ a generator},
N(\l)\equiv \kappa\ \text{in}\  \Delta_2^{-1}N(\frak a)/N(\frak a)\ \}
\end{equation}
be the set of possible choices of $\l$. For a place $w\mid \d_2$, let $\l_w$ be the image of $\l$
in $(\d_2^{-1}\frak a/\frak a)_w$.
\begin{lem} (i)
For each $w\mid \d$,  with $w\nmid \d_1$,
there are two choices
$\l_w$ and $\varpi_w\bar\varpi_w^{-1}\l_w$ of the
local component
$\l_w\in ( \d_2^{-1}\frak a/\frak a)_w$ of $\l$.
Here $\varpi_w$ is a local uniformizer at $w$. \hfb
(ii) If $w\mid \d_2$ and $w\mid \d_1$ {\rm (}i.e., $w\mid 2$ is ramified and $2\mid D_1${\rm )},
then there is a unique choice of $\l_w$. \hfb
(iii) In particular,
$$|\L(\frak a,\kappa,\Delta_2)| = 2^{o(\Delta)-o(D_1)}$$
where $o(\Delta)$ {\rm(}resp. $o(D_1)${\rm )} is the number of prime factors of $\Delta$ {\rm(}resp.  $D_1${\rm )}.
\end{lem}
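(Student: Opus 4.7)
The plan is to reduce to a purely local count at each prime $w\mid\d_2$ of $\OK$, using the primary decomposition
$$\d_2^{-1}\frak a/\frak a \;=\; \bigoplus_{w\mid \d_2}(\d_2^{-1}\frak a/\frak a)_w,$$
which expresses a generator $\l$ as a tuple of local generators $\l_w$ and factors the condition $N(\l)\equiv \kappa\bmod N(\frak a)$ into local conditions. Fixing $w$, scaling so that $\frak a_w=O_w$ and writing $\d_{2,w}=\varpi_w^e O_w$, any local generator $\l_w$ has the form $\varpi_w^{-e}u$ with $u\in O_w^*$ determined modulo $1+\d_{2,w}$, and two such generators give the same $N(\l_w)\bmod N(\frak a)_w$ exactly when they differ by an element of the kernel of
$$N_w\colon O_w^*/(1+\d_{2,w}) \longrightarrow \mathbb Z_p^*/\bigl(1+N(\d_{2,w})\mathbb Z_p\bigr).$$
Granted existence of one valid $\l_w$ (automatic in case (i) by the local Hilbert identity $(\kappa,\Delta)_p=\inv_p(B)=+1$ making $\kappa$ a local norm, and in case (ii) by direct verification that $N_w$ is surjective), the local count equals $|\ker N_w|$. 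Note that $\varpi_w\bar\varpi_w^{-1}$ always lies in $\ker N_w$, as a unit of norm $1$.

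Next, I evaluate $|\ker N_w|$ by local type. For $w$ odd ramified (case (i), $p$ odd), $e=1$ and the trivial residue extension reduces the norm to $u\mapsto u^2$ on $\mathbb F_p^*$, with kernel $\{\pm 1\}$; since $\varpi_w\bar\varpi_w^{-1}\equiv -1\pmod{\varpi_w}$, the two local choices are $\l_w$ and $\varpi_w\bar\varpi_w^{-1}\l_w$, proving (i) at odd places. For $w\mid 2$, set $e_0=\ord_w(\d)\in\{2,3\}$, so $e=e_0$ in case (i.b) and $e=e_0-1$ in case (ii). A direct calculation in a concrete model, for instance $\kay_w=\mathbb Q_2(i)$ when $e_0=2$ and $\kay_w=\mathbb Q_2(\sqrt 2)$ when $e_0=3$, handles each of the four remaining subcases and yields $|\ker N_w|=2$ in case (i.b), again represented by $\varpi_w\bar\varpi_w^{-1}$, and $|\ker N_w|=1$ in case (ii), where $\varpi_w\bar\varpi_w^{-1}$ becomes absorbed into $1+\varpi_w^{e_0-1}$.

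Multiplying the local counts proves (iii): the places contributing a factor of $2$ are exactly those $w$ ramified in $\kay$ with $w\nmid\d_1$, whereas odd primes of $D_1$ have $\d_{2,w}=O_w$ and do not contribute, and the (at most one) case (ii) place contributes a factor of $1$. A bookkeeping check confirms the number of case (i) places equals $o(\Delta)-o(D_1)$. The main obstacle is the dyadic analysis: the orders of $O_w^*/(1+\varpi_w^e)$, the image of $N_w$, and the class of $\varpi_w\bar\varpi_w^{-1}$ all depend on whether $e_0=2$ or $3$ and on whether $w\mid\d_1$, so each of the four dyadic subcases is elementary in isolation but must be verified separately.
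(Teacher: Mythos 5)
Your argument is correct and is exactly the computation the paper has in mind: the paper offers no proof beyond the remark that for odd $w$ the local components are $\pm\l_w$ (which agrees with your kernel $\{\pm1\}$, since $\varpi_w\bar\varpi_w^{-1}\equiv -1 \bmod \varpi_w$) and that the dyadic places follow from an elementary calculation, which you have carried out. Your reduction to the kernel of the local norm map on unit groups, together with the case-by-case dyadic check, fills in precisely what the paper leaves to the reader.
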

Note that for a odd place $w$, the possible local components are just $\pm\l_w$. For $w\mid 2$,
an elementary calculation gives the result.

Given a maximal order $O_B\in \Max_B(\OK)$, we
will be interested in $\OK$-lattices $\L$ in $\kay^2$ that are stable under the action of $O_B \subset M_2(\kay)$.
Note that, for such a lattice $\L$, we have
$O_B = B\cap O_\L,$
where $O_\L\subset M_2(\kay)$ is the maximal order stabilizing $\L$.
Under the isomorphism (\ref{equiiso}), such lattices correspond to left $O_B$-ideals
that are stable under the right action of $\OK$.  The finite id\'eles $\A^\times_{\smallkay,f}$
act on the set of such ideals by right homotheties,
$$I\mapsto I\cdot \bbbold = \widehat I \cdot \bbbold \cap B.$$
Since the right order of such an ideal is again an element of
$\Max_B(\OK)$,  part (iii) of Proposition~\ref{maxorders} implies that every $\A^\times_{\kay,f}$-orbit
contains a two-sided $O_B$ ideal. For a subset $S$ of the set of primes dividing $D_0$, let
$$I_S = \prod_{p\in S} \Cal P,$$
where $\Cal P$ is the two sided $O_B$-ideal with $\Cal P^2 = pO_B$.
Then the $I_S$'s are a set of representatives for the $\A^\times_{\smallkay,f}$-homothety classes.
In particular, there are $2^{o(D_0)}$ such classes.
Let $\L_S$ be the image of $I_S$ under the isomorphism (\ref{equiiso}).
Then $\L_{O_B}:=\L_{\emptyset}$ is the image of $O_B$.

Any $\OK$-lattice in $\kay^2$ has the form $\gbold\,\L_0$, where $\L_0=\OK^2$ and
$\gbold\in \GL_2(\A_{\smallkay,f})$.
Choose a finite id\`ele $\abold$ with
$\ord_w(\abold) = \ord_w(\frak a)$ for all $w$, and an ad\`ele $\lbold$
with components $\lbold_w$ having image $\l_w$ in $(\d_2^{-1}\frak a/\frak a)_w$
for $w\mid \d_2$ and $\lbold_w=0$ otherwise.  We will sometimes write $O_{\abold,\lbold, B}$ in place of $O_{\frak a, \l,B}$.
Then a short calculation shows that, for $O_B=O_{\abold,\lbold, B}$,
\begin{equation}\label{lattice.def}
\L_{O_B}=\gbold\, \L_0,\qquad\text{where}\quad \gbold =\abold^{-1} \begin{pmatrix}1&{}\\
\lbold&\abold\end{pmatrix}.
\end{equation}

 \section{\bf Morphisms $j_\L:\Cal C \lra \M$}\label{morphismsection}

In this section, we suppose that an indefinite quaternion algebra $B$ with a fixed maximal order $O_B$
and an imaginary quadratic field $\kay=\Q(\sqrt{\Delta})$,
with an embedding $i:\OK\rightarrow O_B$ are given.
If $\L\subset B$ is a left $O_B$-ideal 
which is stable for the right action of $\OK$, we view $\L$ as an $O_B\tt_{\Z}\OK$-module
by the rule\footnote{The $\bar{\a}$ occurs here due to the conventions of section~\ref{section.maxorders} above.}
\begin{equation}\label{righttwist}
(b\tt \a): x\mapsto b x\bar\a.
\end{equation}
Then there is a functor, given by Serre's construction, \cite{serre.CF}, \cite{xue}, 
$E \ \mapsto \ A=\L\tt_{\OK} E$,
from elliptic curves $(E,\iota)$ with $\OK$ action over an $\OK$-scheme $S$  to $O_B$-modules $(A,\iota_B)$
over $S$.

Fix an element $\delta$ as in section~\ref{section.maxorders} and hence an identification
$B\simeq \kay^2$ as in (\ref{equiiso}).  Let $\L_0\subset B$  be the free $\OK$-module corresponding to $\OK^2\subset \kay^2$.
Then there is a
natural quasi-isogeny
\begin{equation}\label{quasiiso}
A_0 = \L_0\tt_{\OK}E \lra \L\tt_{\OK}E = A,
\end{equation}
and $A_0 = E\times_S E$.
\begin{lem} The $O_B$-module $A = \L\tt_{\OK}E$ satisfies the Drinfeld
special condition or Kottwitz condition (3.1.2)
in \cite{KRYbook}.
\end{lem}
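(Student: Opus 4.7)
The plan is to verify the determinant condition of \cite[(3.1.2)]{KRYbook}: for every $b \in O_B$,
\begin{equation*}
\det(T - \iota_B(b) \mid \mathrm{Lie}(A)) = T^2 - \mathrm{Trd}(b)\,T + \mathrm{Nrd}(b)
\end{equation*}
in $\OO_S[T]$, where $\mathrm{Trd}$ and $\mathrm{Nrd}$ denote the reduced trace and norm on $B$.

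First I would observe that $\L$ is a locally free $\OK$-module of rank $2$, which is immediate from the fact that $\L$ is an $\OK$-lattice in $\kay^2$. Consequently, $A = \L\tt_{\OK}E$ is an abelian scheme of relative dimension $2$ over $S$, and $\mathrm{Lie}(A) = \L\tt_{\OK}\mathrm{Lie}(E)$ is a locally free $\OO_S$-module of rank $2$: indeed, $\mathrm{Lie}(E)$ is a line bundle on $S$ on which $\OK$ acts through the structure morphism $\OK \to \OO_S$ (the CM condition), so the rank count follows.

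Next I would unpack the $O_B$-action on $\mathrm{Lie}(A)$. By construction, $b\in O_B$ acts on $A$ via $b\cdot(x\tt e) = bx\tt e$, where $bx$ is the left action of $B$ on $\kay^2$ from (\ref{Bembed}). Working Zariski locally on $S$ where $\L \simeq \OK^2$, the induced endomorphism of $\mathrm{Lie}(A)\simeq \mathrm{Lie}(E)^{\oplus 2}$ is represented by the matrix $\zxz{\alpha}{\beta}{\kappa\bar\beta}{\bar\alpha}$ with entries pushed into $\OO_S$ via the structure morphism. The determinant condition then reduces to the universal identity
\begin{equation*}
\det\zxz{T-\alpha}{-\beta}{-\kappa\bar\beta}{T-\bar\alpha} = T^2 - (\alpha+\bar\alpha)\,T + (\alpha\bar\alpha-\kappa\beta\bar\beta) = T^2 - \mathrm{Trd}(b)\,T + \mathrm{Nrd}(b),
\end{equation*}
and, since $\mathrm{Trd}(b), \mathrm{Nrd}(b)\in\Q$, their images in $\OO_S$ are well-defined and the identity holds in $\OO_S[T]$.

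The main obstacle is not a deep computation but rather the bookkeeping: one must carefully track the right-twisted $\OK$-action on $\L$ in (\ref{righttwist}), the conjugate-linearity of the isomorphism (\ref{equiiso}), and the several distinct uses of $\iota$ (the embedding $\kay\hookrightarrow B$, the CM structure $\OK\hookrightarrow \End(E)$, and the structure morphism of $S$) in order to confirm that the induced action on $\mathrm{Lie}(A)$ is really given by (\ref{Bembed}) composed with the structure map, rather than some twist of it. Once this is established, the determinant condition at every prime, including primes where $B$ ramifies, is captured by the single matrix identity above; the coprimality hypothesis $(D(B),\Delta)=1$ is precisely what ensures that $\kay$ splits $B$ cleanly at those primes so that $\L$ remains locally free of rank $2$ as an $\OK$-module there.
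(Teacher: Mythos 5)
Your proof is essentially correct and, at its core, reduces to the same computation as the paper's: the determinant condition is verified on $E\times E$ via the matrix form of $b$ in $M_2$, and then transported to $A$. The cosmetic difference is in how the reduction is carried out. The paper invokes the global quasi-isogeny $A_0 = \L_0\tt_{\OK}E \to A = \L\tt_{\OK}E$ and notes that the characteristic polynomial identity (whose right-hand side has coefficients in $\Z$) is insensitive to this quasi-isogeny; you instead localize Zariski-locally to trivialize $\L$ as a rank-$2$ $\OK$-module, compute the matrix of $b$ on $\mathrm{Lie}(E)^{\oplus 2}$, and read off the identity. Both reduce to the same $2\times 2$ determinant. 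One small imprecision: a local trivialization $\L\simeq \OK^2$ in general conjugates $O_B$ inside $M_2(\kay)$, so the matrix you write down is only a conjugate of $\kzxz{\a}{\b}{\kappa\bar\b}{\bar\a}$; this is harmless because trace and determinant are conjugation-invariant, but it is worth saying, since otherwise the step "represented by the matrix (\ref{Bembed})" is not literally true when $\L\ne\L_0$.

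However, your final sentence is wrong and should be deleted. The hypothesis $(D(B),\Delta)=1$ plays no role in this lemma, and it has nothing to do with $\L$ being locally free of rank $2$ over $\OK$: \emph{any} $\OK$-lattice in $\kay^2$, being a finitely generated torsion-free module over a Dedekind domain of rank $2$, is automatically projective of rank $2$, with no condition on $D(B)$ or $\Delta$. In the paper, that coprimality assumption is introduced only later (for the regularity of the base change $\M'$ in Theorem~\ref{theo.main.strong} and Main Theorem~B), not for this lemma, whose proof is unconditional.
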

\begin{proof} The action of $\OK$ on $\Lie(E/S)$ satisfies
$$\text{\rm char}(\iota(\a)|\Lie(E))(T) = T-i(\a) \in \Cal O_S[T],$$
for $\a\in \OK\overset{}{\rightarrow} \Cal O_S$.
Thus, for $\xi\in M_2(\OK)$,
$$\text{\rm char}(\iota(\xi)|\Lie(E\times E))(T) = T^2-\tr(i(\xi))T+\det(i(\xi))\in \Cal O_S[T],$$
and, hence, for $b\in O_B$,
$$\text{\rm char}(\iota(b)|\Lie(A))(T) = T^2-\tr(b)T+\nu(b)\in \Cal O_S[T],$$
as required, since this formula is unchanged under the quasi-isogeny from $A_0$ to $A$.
\end{proof}
Thus there is a morphism
$$j_\L: \Cal C \lra \M, \qquad (E,\iota) \mapsto (A,\iota_B),$$
of moduli stacks.

Let $O_\L = \End_{\OK}(\L)$ be the stabilizer of $\L\subset \kay^2$ in $M_2(\kay)$.
\begin{lem}  For $A = \L\tt_{\OK} E$,
$\End_{\OK}(A/S) = O_\L.$
\end{lem}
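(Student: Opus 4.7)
The plan is to identify $\End_{\OK}(A/S)$ with $O_\L = \End_{\OK}(\L)$ using the Serre construction, interpreting $\OK$-linearity as commutation with the $\OK$-action on $A$ inherited from the (left, scalar) $\OK$-module structure on $\L \subset \kay^2$. This is the natural structure on $A$ coming from the tensor product, and it differs from the restriction of $\iota_B$ to $i(\OK)$ by the $\bar{\cdot}$ twist recorded in (\ref{righttwist}).

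First I would construct the natural map $O_\L \to \End_{\OK}(A/S)$. Since elements of $O_\L$ are $\OK$-linear endomorphisms of $\L$ with respect to the left scalar action, and since this commutes with the right $\OK$-action of (\ref{righttwist}) used to form the tensor $A = \L\tt_{\OK}E$, any $g \in O_\L$ descends to $g \tt 1 \in \End(A)$, which by construction commutes with the scalar $\OK$-action on $A$. The map is injective because $-\tt_{\OK} E$ is a faithful functor on $\OK$-lattices in $\kay^2$ (visible e.g.\ on the $\ell$-adic Tate module of $E$ at any prime $\ell$ of good ordinary reduction).

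Next I would pass to rational endomorphisms using the quasi-isogeny $A_0 \to A$ of (\ref{quasiiso}), where $A_0 = \L_0 \tt_{\OK} E = E \times_S E$. This yields $\End^0(A) \cong \End^0(A_0) = M_2(\End^0(E))$. Under the identification $A_0 \cong E \oplus E$, the scalar $\OK$-action $j$ on $A_0$ corresponds to the diagonal matrix $\iota(\bar{\alpha})\cdot I$, so for $\phi = (\phi_{ij}) \in M_2(\End^0(E))$, commutation with $j$ forces each entry $\phi_{ij}$ to lie in the centralizer $\End^0_{\OK}(E) = \kay$ of $\iota(\OK)$ inside $\End^0(E)$. (This centralizer equals $\kay$ even in supersingular reduction, since $\iota(\OK)$ spans a maximal subfield of the quaternion algebra $\End^0(E)$.) Therefore $\End^0_{\OK}(A_0) = M_2(\kay)$, and hence $\End^0_{\OK}(A) = M_2(\kay)$, which also equals $\End^0_{\OK}(\L) = O_\L \tt_\Z \Q$.

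Finally I would handle integrality: an element $g \in M_2(\kay)$ defines an actual (integral) endomorphism of $A$ precisely when $g\L \subset \L$, i.e., $g \in O_\L$. If $g \L \not\subset \L$, then $(g\L + \L)/\L$ is a nontrivial finite $\OK$-module and, by the faithfulness of $-\tt_{\OK}E$ noted above (again checking on Tate modules at a suitable prime $\ell$), $g \tt 1$ fails to preserve $A$'s integral structure inside $A \tt \Q$. Combining the three steps yields $\End_{\OK}(A/S) = O_\L$.

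The main obstacle will be step three together with the centralizer computation in step two at supersingular primes of $\OK$, where $\End(E)$ is strictly larger than $\OK$ and one must verify that no extra $\OK$-linear endomorphisms of $A$ appear beyond those coming from $M_2(\kay)$, and that the integral $O_\L$ is cut out exactly by stability of the lattice. The cleanest route is local at each prime, using that $\L$ is a projective $\OK$-module of rank $2$ (hence locally free) so that the local computation reduces to the free case $\L = \L_0$.
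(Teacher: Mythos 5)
Your argument is correct, but it takes a different route from the paper. The paper's proof is a one--line appeal to the functoriality of the Serre construction: $\End_{\OK}((\L\otimes_{\OK}E)/S)\simeq \End_{\OK}(\L)\otimes_{\OK}\End_{\OK}(E/S) = O_\L\otimes_{\OK}\OK = O_\L$, using that the $\OK$-linear endomorphisms of a CM elliptic curve are exactly $\OK$ (the centralizer of $\iota(\OK)$ in $\End(E/S)$, even when $\End(E/S)$ is a quaternionic order). You instead pass to rational endomorphisms via the quasi-isogeny $A_0=E\times_S E\to A$, compute the centralizer of the diagonal $\OK$-action in $M_2(\BB)$ to be $M_2(\kay)$, and then cut out the integral endomorphisms by stability of the lattice $\L$. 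Both arguments hinge on the same input (the centralizer of $\kay$ in $\BB$ is $\kay$); yours is longer but has the advantage of being exactly the template the paper itself uses immediately afterwards to compute $\End(A)=\gbold\, M_2(O_E)\,\gbold^{-1}$ and the order $O_C$ in Proposition~\ref{special.endos}, so nothing is wasted. Two small remarks: the scalar $\OK$-action on $A_0$ induced by (\ref{righttwist}) and (\ref{equiiso}) is $\iota(\a)\cdot I$ rather than $\iota(\bar\a)\cdot I$, though this is immaterial for the centralizer; and the ``integrality iff $g\L\subset\L$'' step is most cleanly justified, as you indicate at the end, by working locally at each prime using exactness of the Serre construction, i.e., $((g\L+\L)/\L)\otimes_{\OK}E\ne 0$ whenever $(g\L+\L)/\L\ne 0$, rather than by appeal to a prime of good ordinary reduction, which need not be available over a general base $S$.
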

\begin{proof}
By functorial
properties of the Serre construction,  there is a natural map
$$O_\L \lra \End_{\OK}(A/S), \qquad \phi \mapsto \phi\tt 1_E,$$
and
$$\End_{\OK}(A/S) = \End_{\OK}((\L\tt_{\OK}E)/S)\simeq O_\L\tt _{\OK}\End_{\OK}(E/S) = O_\L.$$
since $\End_{\OK}(E/S) = \OK$.
\end{proof}

Note that for any $\abold\in \A^\times_{\smallkay,f}$ there is a functor $F_\abold:\Cal C \lra \Cal C$
given by $E\mapsto (\abold)\tt_{\OK} E$. Then, keeping in mind the conjugation in
(\ref{righttwist}),  we have the relation
$j_{\L\,\abold} = j_\L\circ F_{\bar\abold}.$
Thus we need only consider the morphisms $j_\L$ for $\L$ in the set of representatives for the
$\A^\times_{\smallkay,f}$-homothety classes of lattices described in the previous section.

Next suppose that $I$ is a two-sided $O_B$-ideal.  By a (slightly noncommutative) analogue
of the Serre construction, there is a functor
$$F_I : \M \lra \M, \qquad A \mapsto I\tt_{O_B}A,$$
where the action of $O_B$ on $I\tt_{O_B}A$ arises from its left action on $I$.
Then, for any $\L$ as above, we have
$j_{I\L} = F_I\circ j_\L,$
since $I\L \simeq I\tt_{O_B}\L$.

As a consequence of the previous observations, we can and do assume from now on
that $\L$ is the given order $O_B$, which we will frequently identify with the
corresponding $\OK$-lattice in $\kay^2$ via (\ref{equiiso}).

\section{\bf Special endomorphisms}\label{section.spec.endo}

In this section, we determine the endomorphism rings $\End(A)$ and
$\End(A,\iota_B)$
for $(A,\iota_B)$ coming from $(E,\iota)$ by the construction of the previous
section in various cases.
We then determine the space of special endomorphisms
$$L(A,\iota_B) = \{\ x\in \End(A,\iota_B)\mid \tr(x)=0\ \}.$$
We write $V(A,\iota_B) = L(A,\iota_B)\tt_\Z\Q$.

For $(E,\iota)/S$,  let
$O_E = \End(E/S)$ and let $\BB = \End^0(E/S)$, 
so that $O_E$ is an order in $\BB$,
with a given embedding
$i'=\iota:\OK \rightarrow O_E$.
First we consider rational endomorphisms.
As explained above,  we have homomorphisms\footnote{Here (\ref{equiiso}) depends
on the choice of $\delta$ and (\ref{quasiiso}) depends on the choice of the
free lattice $\L_0$. Both of these choices are fixed at the outset.}
\begin{equation}\label{longhoms}
\kay \overset{i}{\lra} B\overset{ \text{(\ref{equiiso})}}\lra M_2(\kay)\overset{i'}{\lra} M_2(\BB) = \End^0(E\times E)
\overset{\text{(\ref{quasiiso})}}{=} \End^0(A).
\end{equation}
and  $C:=\End^0(A,\iota_B)$, is the centralizer of $i'(B)$ in $M_2(\BB)$.
Explicitly, a simple computation shows that
\begin{equation}
C = \{\  [\a,\bbold] = \begin{pmatrix}\a&\bbold\\ \kappa \bbold&\a\end{pmatrix}
\in  M_2(\BB)\mid \a\in i'(\kay), \ \bbold \in V(E,\iota) \ \},
\end{equation}
where $V(E,\iota) = L(E,\iota)\tt_\Z\Q$ for
$$L(E,\iota) = \{ x\in \End(E/S)\mid x\,\iota(\a) = \iota(\bar\a)\,x\ \}$$
the lattice of special endomorphisms of $(E,\iota)$.
Thus
\begin{equation}\label{ratspecial}
V(A,\iota_B) = \{\ [\a,\bbold]\in C\mid \tr(\a)=0\ \},
\end{equation}
and $L(A,\iota_B)$ is the space of trace zero elements in the order
$O_C = \End(A,\iota_B)$ in $C$.
Note that, for $x = [\a,\bbold]\in V(A,\iota_B)$,
\begin{equation}\label{ssnorm}
-x^2 = N(\a)+\kappa N(\bbold).
\end{equation}
From now on, we will suppress $i'$ from the notation.

\begin{rem}  The construction just explained, based on (\ref{longhoms}), is functorial in $E/S$.
In particular, it provides us with idempotents
$$e_1= \begin{pmatrix}1&{}\\{}&0\end{pmatrix}$$
and $e_2 = 1_2-e_1$ in $\End^0(A/S)$. If a special endomorphism $x\in L(A,\iota_B)$
is given, then the corresponding `components' are $\bbold = e_1 x e_2$ and $\a= e_1 x e_1$ in
$\End^0(E/S)$, and their construction is functorial, hence, for example, commutes with any base change.
\end{rem}

\begin{lem}\label{ordinary}
 Suppose that $(E,\iota)/S$ is ordinary, i.e., that $\End(E/S)=\OK$.  Then\hfb
(i) $\End(A/S) = O_\L$. \hfb
(ii) $\End^0(A,\iota_B) = \kay$ and $\End(A,\iota_B) = \OK$. \hfb
(iii)  $L(A,\iota_B) = \{\ \a\in \OK\mid\tr(\a)=0\,\}.$
\end{lem}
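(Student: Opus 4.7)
The plan is to dispatch the three assertions in sequence, each time reducing to the framework of Section~\ref{morphismsection} and exploiting the ordinary hypothesis $\End(E/S) = \OK$.

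For (i), I would invoke the quasi-isogeny $A_0 = E\times E \to A$ of (\ref{quasiiso}) together with the ordinary assumption to produce a canonical identification
$$\End^0(A/S) \;\simeq\; \End^0(A_0/S) \;=\; M_2(\End^0(E/S)) \;=\; M_2(\kay).$$
The functoriality of Serre's construction then translates integrality of an endomorphism of $A = \L \otimes_{\OK} E$ into the condition that the corresponding matrix in $M_2(\kay)$ stabilize the $\OK$-lattice $\L\subset\kay^2$, which by definition of $O_\L$ gives $\End(A/S) = O_\L$.

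For (ii), under the ordinary hypothesis one has $\BB = \kay$, so $C = \End^0(A,\iota_B)$ is simply the centralizer of the image of $B$ under (\ref{Bembed}) inside $M_2(\kay)$. A direct two-step computation---commuting with $\mathrm{diag}(\a,\bar\a)$ for varying $\a\in\kay$ forces diagonality, and then commuting with the image of $\delta$ forces the two diagonal entries to coincide---identifies this centralizer with the scalar subalgebra $\kay\subset M_2(\kay)$. Intersecting with $\End(A/S) = O_\L$ from (i) then yields $\End(A,\iota_B) = O_\L\cap\kay = \OK$.

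For (iii), I would combine the description (\ref{ratspecial}) with the vanishing of $L(E,\iota)$ in the ordinary case: any $j\in\OK = \End(E/S)$ satisfying $j\,\iota(\a) = \iota(\bar\a)\,j$ for all $\a\in\OK$ must satisfy $j(\a-\bar\a) = 0$, hence $j = 0$. Therefore every element of $V(A,\iota_B)$ has the form $[\a,0]$ with $\a\in\kay$ of trace zero, and intersecting with $\End(A,\iota_B) = \OK$ gives the stated description of $L(A,\iota_B)$.

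The only mildly delicate point is in part (i), where one must verify that under the quasi-isogeny $A_0 \to A$, an element of $M_2(\kay)$ extends to an honest endomorphism of $A$ (not merely of $A\otimes\Q$) if and only if it preserves the lattice $\L$; everything else reduces to routine linear algebra inside $M_2(\kay)$ or a direct appeal to the preceding lemma $\End_{\OK}(A/S) = O_\L$.
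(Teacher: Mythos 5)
Your argument is correct and follows the same overall structure as the paper's: establish (i), then deduce (ii) and (iii) from the vanishing of $V(E,\iota)$ in the ordinary case together with the description (\ref{ratspecial}). The one place you depart from the paper is the finishing move in (i). The paper simply notes that, since $\End^0(A/S) = M_2(\kay)$, the ring $\End(A/S)$ is an order in $M_2(\kay)$ containing $O_\L = \End_{\OK}(A/S)$ (by the preceding lemma), and then invokes the \emph{maximality} of $O_\L$ to conclude equality. You instead recompute $\End(A/S)$ directly from the Serre construction as the stabilizer of $\L$, which is valid but essentially reproves the preceding lemma in the ordinary case; the "mildly delicate point" you flag---that a matrix in $M_2(\kay)$ gives an honest endomorphism of $A$ iff it preserves $\L$---is exactly the content that the paper's maximality argument lets one bypass. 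Both routes work; the paper's is shorter because it delegates the Serre-tensor functoriality to the earlier lemma once and for all. Parts (ii) and (iii) you treat exactly as the paper does.
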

\begin{proof} In this case, $\End(A/S)$ is an order
in $M_2(\kay)$ containing the maximal order $O_\L$.  This proves (i).
Since $V(E,\iota)=0$, parts (ii) and (iii) are then immediate from (i) and (\ref{ratspecial}).
\end{proof}

In general, for a CM elliptic curve $E/S$, we write $\End^0(E) = \BB$, so that $\End(E)$ is an order in $\BB$.
Notice that the matrix $\gbold$ of (\ref{lattice.def}) depends only on the lattices $\L$ and $\L_0$ and not on $E$.
We view $\End(A)$ and $\End(A_0)$
as orders in $\End^0(A)=\End^0(A_0)= M_2(\BB)$.
Then
$$\End(A) = \gbold M_2(O_{E})\gbold^{-1},$$
and, similarly,
$$O_C=\End(A,\iota_B) = C\cap \gbold M_2(O_{E})\gbold^{-1}.$$

We now describe $O_C$ and the space of special endomorphisms more explicitly.

\begin{prop}\label{special.endos}  Let $O_B = O_{\frak a, \l, B}$ for $\frak a$ and $\l$ as in
section \ref{section.maxorders}, and let $\gbold$ be given by (\ref{lattice.def}). In particular,
$\frak a = (\abold)$. \hfb
(i) The order $O_C = \End(A,\iota_B)$ in $C$ is the set of all
$$[\a,\bbold]=\begin{pmatrix}\a&\bbold\\ \kappa\bbold&\a\end{pmatrix}\in M_2(\BB)$$
such that
$$\bbold \in L(E,\iota)\, \bar{\mathfrak a}^{-1},\quad \a\in \d_2^{-1},
\quad\text{and}\quad \a+\bbold\,\l'\in O_{E}.$$
Here $\l'$ is an element of $\d_2^{-1}\bar{\frak a}$ whose image in
$\d_2^{-1}\bar{\frak a}/\bar{\frak a}$ coincides with that of $\lbold' = \bar\abold\,\abold^{-1}\,\lbold$.
\end{prop}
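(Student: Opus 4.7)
The strategy is to use the identification $O_C = C \cap \gbold\,M_2(O_E)\,\gbold^{-1}$ from the preceding section, so that $[\a,\bbold]\in O_C$ if and only if $X := \gbold^{-1}[\a,\bbold]\gbold$ lies in $M_2(\widehat O_E)$ at every finite place.  Writing $\gbold = \abold^{-1}M$ with $M = \begin{pmatrix}1 & 0\\ \lbold & \abold\end{pmatrix}$, one must be careful: the scalar conjugation by $\abold^{-1}$ is \emph{not} trivial, because $\abold$ fails to commute with the anti-linear $\bbold$.  Using $\iota(c)\,\bbold = \bbold\,\iota(\bar c)$ for $c\in \kay_\A$, a direct calculation gives $\abold[\a,\bbold]\abold^{-1} = [\a,\,u\bbold]$ with $u := \abold\bar\abold^{-1}$, hence $X = M^{-1}[\a,\,u\bbold]M$.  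Expanding this product --- using anti-linearity throughout and the identity $\bar\lbold\,\lbold = N(\lbold)\in \Q$ --- one finds the four entries of $X$ to be $\a + \bbold\,\lbold'$ at $(1,1)$, $\abold\bbold$ at $(1,2)$, $(\kappa - N(\lbold))\,\bbold\,\abold^{-1}$ at $(2,1)$, and $\a - \lbold\,\bbold$ at $(2,2)$, where $\lbold' := \bar\abold\abold^{-1}\lbold$.

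One now checks integrality place by place.  At primes $\ell$ split in $\kay$ one has $V(E,\iota)_\ell = 0$, so $\bbold_\ell = 0$ and everything reduces to $\a\in O_{\smallkay,\ell}$; at inert $\ell$ one has $\lbold_\ell = 0$, so the diagonals again give $\a \in O_{\smallkay,\ell}$.  The upper-right entry $\abold\bbold$ lies in $V(E,\iota)_\ell$, so $\abold\bbold \in O_{E,\ell}$ iff $\abold\bbold \in L(E,\iota)_\ell$; a local rearrangement via the anti-linearity identity $\abold\bbold = \bbold\bar\abold$ shows this is precisely the lattice condition $\bbold \in L(E,\iota)\,\bar{\frak a}^{-1}$ in the proposition.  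The lower-left entry is then automatic: the defining relation $N(\l)\equiv\kappa \pmod{N(\frak a)}$ from Proposition~\ref{maxorders} forces $\kappa - N(\lbold)\in N(\frak a)\widehat\Z$, supplying exactly the extra ideal factor required to place $(\kappa-N(\lbold))\,\bbold\,\abold^{-1}$ in $L(E,\iota)_\ell$.

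The essential work is at ramified primes $\ell\mid \d_2$.  Using the explicit description of $O_{E,\ell}$ from Lemma~4.4 (and the analogous description at $\ell = p$), the integrality of entry $(1,1)$ decomposes into $\a \in \d_{2,\ell}^{-1}$ together with the mod-$O_{\smallkay,\ell}$ congruence expressing $\a + \bbold\lbold' \in O_{E,\ell}$; independence of the lift $\l' \in \d_2^{-1}\bar{\frak a}$ of $\lbold'$ is immediate, since $\bbold\,\bar{\frak a}\subset L(E,\iota)\subset O_E$.  Entry $(2,2)$ gives the condition $\a - \lbold\bbold\in O_{E,\ell}$, which differs from that of $(1,1)$ by $\bbold(\lbold' + \bar\lbold) = \bbold\,\abold^{-1}\,\mathrm{tr}_{\smallkay/\Q}(\bar\abold\,\lbold)$; a short valuation check, exploiting $\mathrm{tr}_{\smallkay/\Q}(\bar\abold\,\lbold)\in N(\frak a)\Z$ at ramified places, puts this difference in $L(E,\iota)\subset O_E$, so the two entries yield equivalent conditions.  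The main obstacle will be the careful bookkeeping of the anti-linear twist $u=\abold\bar\abold^{-1}$ --- which is what replaces $\lbold$ by $\lbold'$ in the statement --- together with this reconciling trace identity at ramified places.
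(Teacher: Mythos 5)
Your proposal follows the paper's own proof: conjugate $[\a,\bbold]$ by $\gbold$, identify $O_C$ via $\gbold^{-1}[\a,\bbold]\gbold \in M_2(\widehat O_E)$, compute the four entries (your $(\kappa-N(\lbold))\bbold\abold^{-1}$ and $\a-\lbold\bbold$ match the paper's $\abold^{-1}(\kappa-\lbold\bar\lbold)\tilde\bbold$ and $\a-\lbold\abold^{-1}\bar\abold\tilde\bbold$ after moving the anti-linear $\bbold$ through scalars, with $\tilde\bbold=\abold\bar\abold^{-1}\bbold$), show the $(1,2)$ entry gives the lattice condition $\bbold\in L(E,\iota)\bar{\frak a}^{-1}$, that $(2,1)$ is then automatic from the norm congruence $N(\lbold)\equiv\kappa\pmod{N(\frak a)}$, and that $(2,2)$ reduces to $(1,1)$ by the trace identity at places dividing $\d_2$. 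The only cosmetic difference is that you go slightly further than the paper in making the condition $\a\in\d_2^{-1}$ explicit via the structure lemmas for $O_{E,\ell}$ at ramified primes.
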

\begin{proof} By (\ref{lattice.def}),
$$\gbold = \abold^{-1} \begin{pmatrix}1&{}\\\lbold&\abold\end{pmatrix},$$
so that
$$
\gamma=\begin{pmatrix}\a&\bbold\\
\kappa\bbold&\a\end{pmatrix}\in C\subset M_2(\BB),$$
lies in
$O_C$ if and only if $\gbold^{-1} \gamma \gbold \in
M_2(O_E)$, i.e.,
\begin{equation} \label{eqY.1}
\begin{pmatrix}
 \a+\bar\lbold\tilde\bbold&\bar{ \abold}\tilde\bbold\\
 \nass
 \abold^{-1} (\kappa-\lbold\bar\lbold)\tilde\bbold
 & \a-\lbold\,\abold^{-1}\bar{\abold}\tilde\bbold
 \end{pmatrix} \in M_2(\widehat O_E),
 \end{equation}
 with $\tilde\bbold = \abold\bar\abold^{-1}\,\bbold$.

The conditions that the off diagonal entries lie in $\widehat O_{E}$ are then
$$
\tilde\bbold\, \abold\in \widehat O_{E}, \qquad \tilde\bbold\,
\bar{\abold}^{-1} (\kappa -\lbold \bar\lbold) \in \widehat O_E.
$$
A little case by case calculation gives
\begin{equation}
\min (\ord_w(\abold), \ord_w(\bar{\abold}^{-1} (\kappa -\lbold
\bar\lbold)))=\ord_w(\abold),
\end{equation}
for each finite place $w$ of $\kay$.
Thus,  the off diagonal condition is simply
\begin{equation}\label{eqY.2}
\tilde\bbold\in \widehat O_{E}\,\frak a^{-1}, \qquad\text{i.e.}\quad
\bbold \in O_{E}\, \mathfrak a^{-1} \abold\bar{\abold}^{-1} = O_{E}\,\bar{\mathfrak a}^{-1}.
\end{equation}

The diagonal entries  in (\ref{eqY.1}) lie in
$\widehat O_E$ if and only if
$$
\alpha + \bar\lbold \tilde\bbold \in \widehat O_E,  \qquad
\hbox{and} \qquad \alpha -\lbold  \abold^{-1}\bar{\abold} \tilde\bbold
\in  \widehat O_E.
$$
But the second condition is a consequence of the first condition and
(\ref{eqY.2}). Indeed,
\begin{align*}
\alpha -\lbold  \abold^{-1}\bar{\abold} \tilde\bbold &=\alpha +
\bar\lbold \tilde\bbold - \tr(\lbold \abold^{-1})
\tilde\bbold \abold.
\end{align*}
When $w\nmid \d_2$, so that $\lbold_w=0$, there is
nothing to check. When $w|\d_2$,
$$\ord_w (\lbold\abold^{-1}) = \ord_v(\kappa) - \ord_w(\abold) =- \ord_w(\d_2),$$
Thus, $\tr(\lbold \abold^{-1})$ is integral, while, by $(\ref{eqY.2})$,
$(\tilde\bbold\,\abold)_w \in (O_E)_w$.
\end{proof}

\section{\bf The pullback on arithmetic Chow groups}\label{pullback.I}

In this section, we determine the cycle $j_\L^*(\ZZ(t))$ on $\Cal C$
where, for $t\in \Z_{>0}$,  $\ZZ(t)$ is the special cycle on $\M$ defined in \cite{KRYcompo},
\cite{KRYbook}. This pullback is defined as the fiber product
 \begin{equation}\label{fiberproduct}
\begin{matrix}
j^*_\L(\ZZ(t)) &\lra &\ZZ(t)\\
\nass
\snass
i\downarrow&{}&\downarrow\\
\nass
\Cal C&\overset{j_\L}{\lra}&\M.
\end{matrix}
\end{equation}
Thus, $j^*_\L(\ZZ(t))$ is the stack over $\text{\rm Sch}/\OK$ which associates to a base scheme $S$
the category of collections $(E,\iota,A,\iota_B,x)$ with $(E,\iota)$ in
$\Cal C(S)$, $(A,\iota_B)$ in $\M(S)$ with $(A,\iota_B) \simeq j_\L(E,\iota)$
and $x\in V(A,\iota_B)$ a special endomorphism with $Q(x) = -x^2 = t$.

We assume that $\Q(\sqrt{-t})$ and $\kay$ are distinct, so that the images of $\Cal C$ and $\ZZ(t)$
are disjoint on the generic fiber $\M_\Q$. It follows that the generic fiber of
$j_\L^*(\ZZ(t))$ is empty.
Since $\ZZ(t)$ is relatively representable over $\M$ by an unramified morphism,
\cite{KRYbook}, (3.4.3), the same is true for $j_\L^*(\ZZ(t))$ over $\Cal C$.
In particular, the morphism $i$ is finite and unramified, and
the coarse moduli scheme corresponding
to $j_\L^*(\ZZ(t))$ is an artinian scheme.

On the other hand, recall that in Definition~\ref{ZZdef}, we introduced the moduli stacks $\ZZ_{\Cal C}(m,\frak a, \l,r)$
over $\Cal C$, where we now add the subscript $\Cal C$ to distinguish them from the
cycles $\ZZ(t)$ on $\Cal M$. The main result of this section is the following.
\begin{prop}\label{geopullback}  For $t>0$ with $\Q(\sqrt{-t})\ne\kay$,
\begin{equation}\label{onestar}
j_\L^*(\ZZ(t)) = \coprod\limits_{\substack{ \a\in \d_2^{-1} \\ \snass \tr(\a)=0\\ \snass N(\a)<t}}
\ZZ_{\Cal C}\big(\frac{\scr|\Delta|}{\scr D}(t-N(\a)), \bar{\frak a}, \l',r_\a\big),
\end{equation}
where $r_\a$ is the image of $\a$ in $\d^{-1}/\OK$ and
$\l' \in \d_2^{-1}\bar{\frak a}/\bar{\frak a}\subset \d^{-1}\bar{\frak a}/\bar{\frak a}.$
\end{prop}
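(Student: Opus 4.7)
The plan is to unfold the fibre product (\ref{fiberproduct}) and then invoke Proposition~\ref{special.endos} to match the resulting data with the moduli problem of Definition~\ref{ZZdef}. An $S$-valued point of $j_\L^*(\ZZ(t))$ is, after identifying $A=j_\L(E,\iota)=\L\tt_{\OK}E$, a tuple $(E,\iota,x)$ with $x\in L(A,\iota_B)$ and $-x^2=t$. By (\ref{ratspecial}) together with Proposition~\ref{special.endos}, such an $x$ is uniquely a matrix $[\a,\bbold]$ with $\a\in\d_2^{-1}$, $\bbold\in L(E,\iota)\,\bar{\frak a}^{-1}$ and $\a+\bbold\l'\in O_E$, and the trace-zero condition on $x$ forces $\tr(\a)=0$.

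Next I would translate the norm condition. By (\ref{ssnorm}), $-x^2=N(\a)+\kappa N(\bbold)=t$, so $N(\bbold)=(t-N(\a))/\kappa$. Using (\ref{akapparelation}), i.e.\ $N(\frak a)/\kappa=|\Delta|/D$, this becomes
\begin{equation*}
\deg(\bbold)=N(\bbold)=\frac{m}{N(\bar{\frak a})},\qquad m=\tfrac{|\Delta|}{D}\bigl(t-N(\a)\bigr).
\end{equation*}
Thus, for a fixed admissible $\a\in\d_2^{-1}$ with $\tr(\a)=0$, the remaining data $(E,\iota,\bbold)$ satisfies exactly conditions (1)--(4) of Definition~\ref{ZZdef} with parameters $(\bar{\frak a},\l',r_\a)$, where $r_\a$ is the image of $\a$ in $\d^{-1}/\OK$ (note that $\d_2^{-1}\subset\d^{-1}$, and the condition $\a+\bbold\l'\in O_E$ becomes condition (3) verbatim once $r_\a$ replaces $r$). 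Conversely, every triple in $\ZZ_{\Cal C}(m;\bar{\frak a},\l',r_\a)$ yields such an $x=[\a,\bbold]$. This gives an identification of $S$-points on both sides of (\ref{onestar}).

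To show the union is disjoint, I would appeal to the remark following (\ref{ratspecial}): the idempotents $e_1,e_2\in\End^0(A)$ are functorial in $E/S$, so the components $\a=e_1xe_1$ and $\bbold=e_1xe_2$ of a given $x$ are intrinsic invariants. Hence different values of $\a$ label disjoint pieces of the moduli stack. The index set is then the set of $\a\in\d_2^{-1}$ with $\tr(\a)=0$ and $N(\a)<t$: equality $N(\a)=t$ would produce $\sqrt{-t}\in\kay$, which is ruled out by the standing hypothesis $\kay\neq\Q(\sqrt{-t})$, and the strict inequality is needed because $\ZZ_{\Cal C}(m)$ is empty unless $m>0$ (equivalently $\bbold\neq 0$). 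Finally, the isomorphism of categories on both sides of (\ref{onestar}) transports morphisms correctly, since an $\OK$-isomorphism $(E,\iota)\to(E',\iota')$ induces an $O_B$-isomorphism $j_\L(E,\iota)\to j_\L(E',\iota')$ compatible with the decomposition $x=[\a,\bbold]$.

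The main bookkeeping obstacle is keeping straight the appearances of $\frak a$ versus $\bar{\frak a}$ and of $\l$ versus the twist $\l'$: these arise because the right $\OK$-action on $\L=O_B$ is via $\bar\a$ (see (\ref{righttwist})) and because Proposition~\ref{special.endos} identifies $L(A,\iota_B)$ using the specific model $\gbold$ of $\L$ from (\ref{lattice.def}). Once that translation is made carefully, the proof is just the conversion of the norm identity $N(\a)+\kappa N(\bbold)=t$ into the congruence $\deg(\bbold)=m/N(\bar{\frak a})$ via (\ref{akapparelation}), and the rest is a direct matching of defining conditions.
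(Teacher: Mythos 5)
Your proposal is correct and follows essentially the same route as the paper: unfold the fibre product, invoke Proposition~\ref{special.endos} and (\ref{ratspecial}) to write $x=[\a,\bbold]$, convert $-x^2=N(\a)+\kappa N(\bbold)=t$ into $\deg\bbold=m/N(\bar{\frak a})$ with $m=\frac{|\Delta|}{D}(t-N(\a))$ via (\ref{akapparelation}), and match against Definition~\ref{ZZdef}. The paper's own proof is terser on two points you helpfully spell out: it states the two constructions are inverse without explicitly flagging that disjointness of the union comes from the functoriality of the idempotents $e_1,e_2$ (so that $\a$ is an intrinsic invariant of $x$), and it relegates the exclusion $N(\a)\ne t$ to the remark preceding the proof rather than arguing, as you do, that equality would force $\sqrt{-t}\in\kay$.
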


Here  we view the union on the right side of (\ref{onestar}) as a disjoint union of stacks indexed by $\a$.
No terms with $N(\a)=t$ can occur, due to our assumption
that $\kay\ne \Q(\sqrt{-t})$.

\begin{proof} First we define a functor from $j^*_\L(\ZZ(t))$ to union on the right side.
Given $(A,\iota_B,E,\iota,x)$ over $S$, we have components $\a = e_1xe_1\in \d_2^{-1}$ and
$\bbold = e_1 x e_2\in L(E,\iota)\bar{\frak a}^{-1}$ with $\tr(\a)=0$ and
$\a+\bbold \l'\in O_E$.  Moreover, $t= N(\a)+\kappa N(\bbold)$. The collection
$(E,\iota,\bbold)$ is then an object of $\ZZ_{\Cal C}(m,\bar{\frak a},\l',r_\a)$
over $S$, where
$r_\a$ is the image of $\a$ in $\d^{-1}/\OK$ and
$$m = N(\frak a) N(\bbold) = \frac{|\Delta|}{D}(t-N(\a)).$$
Note that $N(\bbold)\ge 0$, so that $N(\a)\le t$.
This construction is functorial.
Conversely, over a base $S$, an object of the right side of (\ref{onestar}) in the term with index $\a$ is a collection $(E,\iota,\bbold)$
in $\ZZ_{\mathcal C}(m,\bar{\frak a},\l',r)$ where an $\a\in \d_2^{-1}$ is {\it given} by the index,
$r= r_\a$, and $m= \frac{\scr|\Delta|(t-N(\a))}{\scr D}$.
Then we obtain $(A,\iota_B,E,\iota,x)$ by taking $A= j_\L(E)$ and $x=[\a,\bbold]$.
These constructions are inverses of each other.
\end{proof}

\section{\bf Pullbacks of Green functions}

In this section, we will compute the pullback $j_\L^*\Xi(t,v)$
of the Green function associated to the cycle $\ZZ(t)$ and its contribution
to the arithmetic degree $\degh(j_\L^*\ZH(t,v))$. We continue to assume that
the cycles $j_\L(\Cal C)$ and $\ZZ(t)$ are disjoint on the generic fiber.

We begin by giving a more intrinsic description of the Green function $\Xi(t,v)$ defined in
\cite{kudla.annals} and sections 3.2 and 3.4 of \cite{KRYbook} for any $t\in \Q^\times$ and $v\in \R^\times_+$. We view $\Xi(t,v)$ as
a real valued function on $\Cal M(\C)$; its value on an object $(A,\iota_B)$ of
$\Cal M(\C)$ is defined as follows.  Let $T_e(A)$ be the tangent space at the identity and let
$T_e(A^\top)$ be the underlying real vector space, where $A^\top$ is the underlying real torus.
For $x\in \End(T_e(A^\top))$, let $|N(x)| = |\det(x)|^{\frac12}$. Write
\begin{equation}\label{AdJpm}
\End(T_e(A^\top),\iota_B) = U^++U^-,
\end{equation}
where $U^+$ (resp. $U^-$) is the space of complex linear (resp. anti-linear) endomorphisms
of $T_e(A^\top)$ commuting with the action of $O_B$. If
$$\xx\in V(A^\top,\iota_B) = \{ \ \xx\in \End(A^\top,\iota_B)\mid \tr(\xx)=0\ \}$$
is a special endomorphism of $A^\top$ and $x\in \End(T_e(A^\top),\iota_B)$ is the induced endomorphism of
$T_e(A^\top)$,  let $\pr_-(x)$ be its $U^-$-component.  Then
we let
\begin{equation}\label{newgreen}
\Xi(t,v)(A,\iota_B) = \sum_{\substack{\xx\in L(A^\top,\iota_B)\\ \snass Q(x) = t\ }} \b_1(4\pi v |N(\pr_-(x))|),
\end{equation}
where $\b_1(r)= - \Ei(-r)$ is the exponential integral, \cite{KRYbook}, (3.5.2).
Here recall that the quadratic form $Q$ on $V(A^\top,\iota_B)$ is defined by
$-x^2 = Q(x) \,\text{id}_A$.
This sum converges if there is no element  $\xx\in V(A^\top,\iota_B)$
with $Q(x)=t$ and  $\pr_-(x) =0$.  Note that the image in $\M(\C)$ of the cycle $\ZZ(t)(\C)$ consists precisely of those $(A,\iota_B)$' s
for which such a {\it holomorphic} special endomorphism $\xx$  exists,
so that the function $\Xi(t,v)$ is well defined
outside of this cycle. In particular, if $t<0$, then there are no such holomorphic endomorphisms
and $\Xi(t,v)$ is a smooth function on all of $\Cal M(\C)$.

The relation between this description and that given in \cite{KRYbook} arises as follows.
Writing $A(\C) \simeq T_e(A)/L$
where $L$ is a lattice, we can choose an isomorphism
\begin{equation}\label{ABiso}
B_\R = B\tt_\Q\R \isoarrow T_e(A), \qquad O_B\isoarrow L.
\end{equation}
This is unique up to right multiplication by an element of $O_B^\times$.  Note that
$$B_\R \isoarrow \End(T_e(A^\top),\iota_B), \qquad b \mapsto (x\mapsto x b^\iota),$$
and, under this isomorphism,
$$O_B\isoarrow \End(A^\top, \iota_B).$$
Also, $|N(x)|$ is the absolute value of the reduced norm of $x$.
The complex structure on $T_e(A)$ is then given by right multiplication by an element
$J\in B_\R^\times$, and the decomposition (\ref{AdJpm}) becomes
$B_\R = U^+ + U^-$
where $U^\pm$ is the $\pm1$-eigenspace of $\text{\rm Ad}(J)$.
If $V$ is the space of trace zero elements in $B$ with inner product $(x,y) = \tr(xy^\iota)$, then
$U^-\subset V(\R)$ is a negative $2$-plane,  oriented by the action of $J$, i.e.,
an element $z$ of the space $D$.  The $U^-$-component of $x\in O_B\cap V$ is denoted by $\pr_z(x)$
in \cite{KRYbook}, and
$|(\pr_z(x),\pr_z(x))| = 2 |N(\pr_z(x))|$.
We then have
$$\Xi(t,v)(A,\iota_B) =
\Xi(t,v)(z) = \sum_{\substack{x\in O_B\cap V \\ \snass N(x) = t\ }} \b_1(2\pi v |(\pr_z(x),\pr_z(x))|),$$
as an $O_B^\times$-invariant function on $D$, with singularities at the points $z$ for which $\pr_z(x)=0$
for some $x$ in $O_B\cap V$ with $Q(x)=t$.

We now determine the pullback of this function to $\Cal C(\C)$ under $j_\L:\Cal C(\C) \rightarrow \M(\C)$.
The analysis of special endomorphism
made in section~\ref{section.spec.endo} is based on the functorial properties of the Serre
construction. Hence it applies without change to the real tori $E^{\top}$ and $A^{\top}$
underlying  $(E,\iota)/\C$ and $A = j_\L(E)$, and yields the following result.
\begin{prop} Let $\BB = \End^0(E^\top)$, $O_{E^\top} = \End(E^\top)$ and
$$L(E^\top,\iota) = \{\ x\in \End(E^\top)\mid x \iota(\a) = \iota(\bar\a) x \ \}.$$
Then
$$
\End(A^\top,\iota_B) =
  \{\ [\a,\bbold]\in M_2(\BB)\mid \bbold \in L(E^\top,\iota)\, \bar{\mathfrak a}^{-1},\ \a\in \d_2^{-1},\
 \a+\bbold\,\l'\in O_{E^\top}\ \}.
$$
If $x=[\a,\bbold]$ is a special endomorphism, i.e., if $\tr(\a)=0$, then
$Q(x) = N(\a) +\kappa N(\bbold),$
where $-\bbold ^2=N(\bbold)\,\text{\rm id}_E$.
\end{prop}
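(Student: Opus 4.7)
The plan is to observe that the entire derivation of Proposition~\ref{special.endos} depends only on three ingredients: the Serre construction $A=\L\tt_{\OK}E$ together with the quasi-isogeny $A_0=E\times E\to A$ of (\ref{quasiiso}), the identification (\ref{longhoms}) of $\End^0(A)$ with $M_2(\BB)$ where $\BB=\End^0(E)$, and the local matrix computation following (\ref{eqY.1}). Each of these steps is functorial in the $\OK$-module $E$ and uses only the data $(O_E,\BB,i')$. Therefore, if we replace $E$ by the real torus $E^\top$ and $O_E=\End(E)$ by $O_{E^\top}=\End(E^\top)$ throughout, the same derivation goes through verbatim.

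Concretely, I would first construct the real torus analog $A^\top=\L\tt_{\OK}E^\top$, verify it comes equipped with an $O_B$-action $\iota_B$, and check that the quasi-isogeny $A_0^\top=E^\top\times E^\top\to A^\top$ induces $\End^0(A^\top)=M_2(\BB)$. Next, exactly as in Section~\ref{section.spec.endo}, the centralizer $C^\top$ of $i'(B)$ in $M_2(\BB)$ consists of matrices $[\a,\bbold]=\kzxz{\a}{\bbold}{\kappa\bbold}{\a}$ with $\a\in i'(\kay)$ and $\bbold\in V(E^\top,\iota)$; the special endomorphisms are the trace-zero elements, i.e., those with $\tr(\a)=0$. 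The order $\End(A^\top,\iota_B)$ is then obtained by intersecting $C^\top$ with $\gbold\, M_2(O_{E^\top})\gbold^{-1}$, where $\gbold$ is the same matrix from (\ref{lattice.def}) determined purely by $(\frak a,\l)$.

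I would then carry out the local analysis of the condition $\gbold^{-1}\gamma\gbold\in M_2(\widehat{O}_{E^\top})$ exactly as in (\ref{eqY.1})--(\ref{eqY.2}). The off-diagonal conditions give $\bbold\in O_{E^\top}\,\bar{\frak a}^{-1}$, using the same identity $\min(\ord_w(\abold),\ord_w(\bar\abold^{-1}(\kappa-\lbold\bar\lbold)))=\ord_w(\abold)$, which is a statement about $\OK$-ideals and is independent of $E$. The two diagonal conditions reduce, via the relation $\a-\lbold\abold^{-1}\bar\abold\tilde\bbold=\a+\bar\lbold\tilde\bbold-\tr(\lbold\abold^{-1})\tilde\bbold\,\abold$, to the single condition $\a+\bbold\l'\in O_{E^\top}$ with $\l'=\bar\abold\abold^{-1}\lbold$, again verbatim from the proof of Proposition~\ref{special.endos}. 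This gives the desired description of $\End(A^\top,\iota_B)$.

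Finally, for a special endomorphism $x=[\a,\bbold]$, the identity $-x^2=N(\a)+\kappa N(\bbold)$ is the same matrix computation as (\ref{ssnorm}), using $\tr(\a)=0$ (so $\a^2=-N(\a)$) and $-\bbold^2=N(\bbold)\,\mathrm{id}_{E^\top}$, which is the defining quadratic form on $L(E^\top,\iota)$. There is no real obstacle here; the only task is to make sure that the Serre construction and the identification of endomorphism rings make sense for real tori equipped with $\OK$-action, which is essentially formal since $\OK$-module operations preserve the underlying real structure. The whole proof is therefore a direct transcription of Proposition~\ref{special.endos} with $E$ replaced by $E^\top$.
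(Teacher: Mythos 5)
Your proposal is correct and is essentially identical to the paper's own justification: the paper simply remarks that the analysis of special endomorphisms in the earlier section rests on the functorial properties of the Serre construction and therefore "applies without change" to the underlying real tori $E^{\top}$ and $A^{\top}$, which is exactly the transcription you carry out. The extra details you supply (the centralizer computation, the local conditions from $\gbold^{-1}\gamma\gbold\in M_2(\widehat O_{E^\top})$, and the norm identity) are just the verbatim repetition of the proof of the earlier proposition with $O_E$ replaced by $O_{E^\top}$, as intended.
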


Using this in (\ref{newgreen}), we obtain the following formula for the pullback.
\begin{prop}  Assume that $t\ne 0$ and that $\Q(\sqrt{-t}) \ne \kay$. Then
$$\Xi(t,v)(j_\L(E,\iota)) = \sum_{\substack{\a\in \d_2^{-1}\\ \snass \tr(\a)=0\\ \snass N(\a)>t}}
\ZZ_{\Cal C}\big( \frac{\scr|\Delta|}{\scr D}(t-N(\a)),D_2 v ; \bar{\frak a}, \l',r_\a\big)(E,\iota),$$
where
$\ZZ_{\Cal C}(m,v;\frak a, \l, r)$ is the function on $\Cal C(\C)$ defined in (\ref{arch.defZZ}).
\end{prop}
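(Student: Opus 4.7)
The plan is to unwind the definition (\ref{newgreen}) of $\Xi(t,v)$ at the point $j_\L(E,\iota)$ using the description of $\End(A^\top,\iota_B)$ supplied by the preceding proposition. Concretely, every $x\in L(A^\top,\iota_B)$ with $Q(x)=t$ has the form $x=[\a,\bbold]$ with $\a\in\d_2^{-1}$, $\tr(\a)=0$, $\bbold\in L(E^\top,\iota)\bar{\frak a}^{-1}$, $\a+\bbold\l'\in O_{E^\top}$, and $N(\a)+\kappa N(\bbold)=t$. Since $B$ is indefinite, $\kappa>0$, while $L(E^\top,\iota)$ carries a negative definite form so that $N(\bbold)\le 0$; together with $N(\a)\ge 0$ this forces $N(\a)\ge t$, and the excluded equality $N(\a)=t$ is ruled out by the hypothesis $\Q(\sqrt{-t})\ne\kay$. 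So the index set is exactly the pairs $(\a,\bbold)$ with $N(\a)>t$.

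The key geometric step is to identify $\pr_-(x)$. Writing $x=[\a,0]+[0,\bbold]$, the first summand lies in $\iota_B(\kay)\subset C=\End^0(A,\iota_B)$ and therefore acts $\C$-linearly on $T_e(A^\top)$ (since the complex structure is precisely given by $\iota_B(\kay)$, whose centraliser in $C$ is itself). Using the multiplication rule of the previous section, one checks $[0,\bbold]\cdot[\a',0]=[\a',0]\cdot[0,\bbold]\cdot\bar{\,\cdot\,}$, i.e.\ $[0,\bbold]$ anti-commutes with $\iota_B(\kay)$ and hence is $\C$-anti-linear. Therefore $\pr_+(x)=[\a,0]$ and $\pr_-(x)=[0,\bbold]$.

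Next I would compute $|N(\pr_-(x))|$. The element $\delta_C:=[0,\bbold]$ satisfies $\delta_C^{\,2}=[\,\kappa\bbold^2,0\,]=-\kappa N(\bbold)\cdot\mathrm{id}_A$, which is a rational scalar. Since $C$ has trace-zero form of signature $(1,2)$ (with $\kay$-imaginary part positive and $\kay\delta_C$ negative), $C$ is indefinite, so $C\otimes\R\simeq M_2(\R)$ acts on the $4$-dimensional real space $T_e(A^\top)$ as a free rank-$2$ module. For $y\in C\otimes\R\simeq M_2(\R)$ this gives $\det_\R(y\mid T_e(A^\top))=\mathrm{Nrd}(y)^2$, so $|N(\pr_-(x))|=|\mathrm{Nrd}(\delta_C)|=|\kappa N(\bbold)|=-\kappa N(\bbold)=N(\a)-t$, the last equality using $Q(x)=t$. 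Substituting into (\ref{newgreen}) gives
\[
\Xi(t,v)(j_\L(E,\iota))=\sum_{\substack{\a\in\d_2^{-1}\\ \tr(\a)=0\\ N(\a)>t}}\ \sum_{\bbold}\ \b_1\!\bigl(4\pi v(N(\a)-t)\bigr),
\]
where the inner sum is over $\bbold\in L(E^\top,\iota)\bar{\frak a}^{-1}$ with $\a+\bbold\l'\in O_{E^\top}$ and $\kappa N(\bbold)=t-N(\a)$; using (\ref{akapparelation}) this last condition is precisely $Q(\bbold)=\bigl(\tfrac{|\Delta|}{D}(t-N(\a))\bigr)/N(\bar{\frak a})$, so the inner sum is exactly the archimedean counting set underlying $\ZZ_{\Cal C}\bigl(\tfrac{|\Delta|}{D}(t-N(\a));\bar{\frak a},\l',r_\a\bigr)(E,\iota)$.

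The last step is to reconcile the $\b_1$-arguments. By (\ref{arch.defZZ})--(\ref{arch.length}) each $\bbold$ contributes $\b_1(4\pi|\mm|v_0)$ with $\mm=m/\Delta(\l')$ and $v_0=D_2 v$, so I need $\tfrac{|\Delta|D_2}{D\,\Delta(\l')}=1$. A short local analysis shows that $\l'$, being a unit multiple of $\lbold$ at each place dividing $\d_2$, generates $\d_2^{-1}\bar{\frak a}/\bar{\frak a}$, whence $\Delta(\l')=\Delta_2=|\Delta|/D_1$; combined with $D=D_0D_1$ and the convention $D_2=D_0$, this yields the identity. The expected obstacle is almost entirely this bookkeeping step: checking that the power of primes of $\d$ contributing to $\Delta(\l')$ really matches the scaling $D_2$ (so in particular that no place gets lost when one rewrites $(\a,\bbold)$ with respect to $\bar{\frak a}$ instead of $\frak a$), and verifying the signature of $C\otimes\R$ in order to justify the identity $\det_\R=\mathrm{Nrd}^2$.
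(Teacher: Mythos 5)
Your proposal is correct and follows the same route as the paper's (very terse) proof: write $x=[\a,\bbold]$, observe that the $U^-$-projection is $[0,\bbold]$, compute $|N(\pr_-(x))|=\kappa|N(\bbold)|$, substitute into (\ref{newgreen}), and then reconcile the exponents. The paper simply asserts $N(x_-)=\kappa N(\bbold)$ and the bookkeeping identity; you have supplied the justification for both, which is the genuine content of the check. Your determinant argument via $C\otimes\R\simeq M_2(\R)$ is a fine way to get $|N(\pr_-(x))|$; equally direct is the observation that $\pr_-(x)^2=[\kappa\bbold^2,0]=-\kappa N(\bbold)\cdot\mathrm{id}$ is a nonnegative real scalar on the $4$-dimensional space $T_e(A^\top)$, so $|\det|^{1/2}=-\kappa N(\bbold)$. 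The bookkeeping also checks out in general: $\d=\d_1\d_2$ with $N(\d_1)=D_1$ gives $D_1\Delta_2=N(\d_1)N(\d_2)=N(\d)=|\Delta|$ even when $\d_1,\d_2$ are not coprime, so $\tfrac{|\Delta|D_2}{D\Delta_2}=1$ holds as you say.

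One small slip in the wording that does not affect the conclusion: $\iota_B(\kay)$ is \emph{not} contained in $C=\End^0(A,\iota_B)$ (under (\ref{longhoms}), $\iota_B(\a)$ corresponds to $\mathrm{diag}(\a,\bar\a)$, whose intersection with $C$ is only $\Q$). The element whose commutation properties are actually relevant is $[\a',0]=\mathrm{diag}(\iota(\a'),\iota(\a'))$, the diagonal $\kay$-action arising from the Serre construction $A=\L\otimes_{\OK}E$, which is the action that induces the complex structure on $T_e(A^\top)=T_e(E)\oplus T_e(E)$. Since $\bbold\in L(E^\top,\iota)$ anti-commutes with $\iota(\OK)$, $[0,\bbold]$ conjugates $[\a',0]$ by $\a'\mapsto\bar\a'$, hence lies in $U^-$; this is the statement you intended, and with it the rest of your argument is correct.
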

\begin{proof}
Note that, if $\xx = [\a,\bbold] \in V(A^\top,\iota_B)$, then $N(x_-) = \kappa \,N(\bbold)$.
Also note that $N(\bbold)\le 0$, and that $\bbold \ne0$ due to our assumption
that $\Q(\sqrt{-t})\ne \kay$. Recall that $\kappa = N(\frak a)D/|\Delta|$. Thus
\begin{align*}
\Xi(t,v)(A,\iota_B) &= \sum_{\substack{\xx=[\a,\bbold]\in \End(A^\top,\iota_B)\\ \snass Q(\xx)=t}} \b_1(4\pi v \frac{N(\frak a)D}{|\Delta|}|N(\bbold)|)\\
\nass
{}&= \sum_{\substack{\a\in \d_2^{-1}\\ \snass \tr(\a)=0\\ \snass N(\a)>t}} \quad
\sum_{\substack{\bbold \in L(E^\top,\iota)\bar{\frak a}^{-1}\\
\snass \a+\bbold \l' \in O_{E^\top}\\ \snass N(\a)+\kappa N(\bbold)=t}} \b_1(4\pi v |\frac{m D_2}{N(\d_2)}|)\\
\nass
{}&=\sum_{\substack{\a\in \d_2^{-1}\\ \snass \tr(\a)=0\\ \snass N(\a)>t}}
\ZZ_{\Cal C}\big( \frac{\scr|\Delta|}{\scr D}(t-N(\a)), D_2 v;\bar{\frak a}, \l',r_\a)(E,\iota),
\end{align*}
where, in the second line,
$N(\bbold) = \kappa^{-1}(t-N(\a)) = m\,N(\frak a)^{-1}$,
so that
$$m = \frac{|\Delta|}{D}(t-N(\a)).$$
Here recall that
$\Delta(\l') = N(\d_2)$.

\end{proof}

\section{\bf The main formula for $\degh\, j^*_\L(\phih(\tau))$.}

We now give a formula for the pullback
$$
\degh\, j^*_\L(\phih(\tau)) = \sum_{t} \degh\, j^*_\L(\Zh(t,v))\,q^t,
$$
which, by the results of \cite{KRYbook}, is a modular form of weight $\frac32$ and level $4D(B)_o$,
where $D(B)_o$ is the odd part of $D(B)$.
To express the result,  we introduce the theta functions of weight $\frac12$, defined by
$$\theta(\tau;r)= \sum_{\substack{\a\in \d^{-1}\\
\snass \tr(\a)=0\\ \snass \a\equiv r\!\! \!\mod \OK}} q^{N(\a)}\qquad \text{for\ }
r\in \d^{-1}/\OK.$$

\begin{theo}\label{theo.weak.main}  Suppose that $t\ne 0$ and that $\Q(\sqrt{-t})\ne \kay$. Then the quantity
$\degh\, j^*_\L(\Zh(t,v))$ is the coefficient of $q^t$ in the modular form
\begin{equation}\label{twostars}
\sum_{\substack{r\in \d_2^{-1}/\OK\\ \snass \tr(r)=0}} \theta(\tau;r)\,\widehat{\phi}_{\Cal C}(D_2\tau; \bar{\frak a}, \l',r),
\end{equation}
where $\widehat{\phi}_{\Cal C}(\tau;\frak a,\l,r)$ is the generating function defined in (\ref{Cgenfun})
and $D = D(B) = D_1D_2$, as in section~\ref{section.maxorders}.
\end{theo}

If we assume disjoint ramification, then we have the following stronger result,
whose proof will be completed in the next section.

\begin{theo} \label{theo.main.strong} Assume that $\Delta$ and $D(B)$ are relatively prime. Then
$$j^*_\L(\phih(\tau)) =\sum_{\substack{r\in \d^{-1}/\OK\\ \snass \tr(r)=0}}
\theta(\tau;r)\,\widehat{\phi}_{\Cal C}(D(B)\tau; \bar{\frak a}, \l',r),$$
where $\widehat{\phi}_{\Cal C}(\tau;\frak a,\l,r)$ is the generating function defined in (\ref{Cgenfun}).
\end{theo}

\begin{rem}
Note that when, in addition, $2\nmid \Delta$, then by Theorem~\ref{maintheo1}, we have
\begin{equation}
\sum_{\substack{r\in \d_2^{-1}/\OK\\ \snass \tr(r)=0}} \theta(\tau;r)\,\widehat{\phi}_{\Cal C}(D_2\tau; \bar{\frak a}, \l',r)
=-\frac12\,\frac{\d}{\d s}\bigg(\sum_{\substack{r\in \d_2^{-1}/\OK\\ \snass \tr(r)=0}}
\theta(\tau;r)\, E^{*}(D_2\tau, s;\bar{\frak a},\l',r)\bigg)\bigg\vert_{s=0}.
\end{equation}
\end{rem}

\begin{proof}[Proof of Theorem~\ref{theo.weak.main}]
By the results of the previous sections, if $t\ne 0$ and $\Q(\sqrt{-t})\ne \kay$, the quantity
$\degh\, j^*_\L(\Zh(t,v))$ is the sum of the terms
\begin{equation}\label{alphasmall}
\sum_{\substack{ \a\in \d_2^{-1} \\ \snass \tr(\a)=0\\ \snass N(\a)<t}}
\degh\,\ZZ_{\Cal C}\big(\frac{\scr|\Delta|}{\scr D}(t-N(\a)), \bar{\frak a}, \l',r_\a\big),
\end{equation}
and
\begin{equation}\label{alphabig}
\sum_{\substack{\a\in \d_2^{-1}\\ \snass \tr(\a)=0\\ \snass N(\a)>t}}
\degh\,\ZZ_{\Cal C}\big( \frac{\scr|\Delta|}{\scr D}(t-N(\a)),D_2 v; \bar{\frak a}, \l',r_\a\big).
\end{equation}
The term here for a fixed $\a$ is the coefficient of $q^{\frac{D_2 m}{\Delta(\l')}}$ in $\widehat{\phi}_{\Cal C}( D_2\tau;\bar{\frak a},\l',r)$,
where
$$ \frac{D_2 m}{\Delta(\l')} = \frac{D_2 |\Delta|}{\Delta(\l')D}(t-N(\a)) = t-N(\a),$$
since $\Delta(\l')= N(\d_2)$. Recall that $\l$ is a generator for the cyclic module $\d_2^{-1}\frak a/\frak a$.
This gives the claimed identity.
\end{proof}

For later use, we compute the remaining Fourier coefficients of the modular form (\ref{twostars}).

\begin{prop}\label{prop.last.facts} (i) The constant term of (\ref{twostars}) is
$$\degh\,\ZZ_{\Cal C}(0,D_2 v) = -  \Lambda'(1, \chi) - \frac12\,\Lambda(1, \chi)\log(D_2 v).$$
Note that $D(B)=D_2 \frac{|\Delta|}{\Dl}$, and that, when $D(B)$ and $\Delta$ are relatively prime,  $D(B) = D_2$. \hfb
(ii) Suppose that $t\in \Z_{>0}$ with $\Q(\sqrt{-t}) = \kay$, and write $4t = n^2|\Delta|$. Then the $t$-th Fourier coefficient of (\ref{twostars}) is
the sum of the following terms: \hfb
(a)  The contribution where $\a= \pm\sqrt{t}$,
$$2\ \degh\,\ZZ_{\Cal C}(0,D_2 v).$$
(b) Terms given by (\ref{alphasmall}),  where $N(\a)<t$, and (\ref{alphabig}), where $N(\a)>t$.
\end{prop}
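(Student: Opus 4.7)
The plan is to compute the relevant Fourier coefficients of (\ref{twostars}) by direct expansion of the product. From the definitions, the coefficient of $q^t$ in $\theta(\tau;r)\cdot\widehat{\phi}_{\Cal C}(D_2\tau;\bar{\frak a},\l',r)$ is a sum over pairs $(\a,m)$ with $N(\a)+D_2m/\Delta(\l')=t$, where $\a\in\d^{-1}$ has $\tr(\a)=0$ and $\a\equiv r\mod\OK$, and $m\in\Z$. Summing over $r\in\d_2^{-1}/\OK$ with $\tr(r)=0$ absorbs the coset condition into a single sum over $\a\in\d_2^{-1}$ with $\tr(\a)=0$, coupled to the integer $m=(t-N(\a))\Delta(\l')/D_2=(t-N(\a))|\Delta|/D(B)$. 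Each summand is weighted by $\degh\,\ZZ_{\Cal C}(m,D_2v;\bar{\frak a},\l',r_\a)$, and the indices match exactly those of (\ref{alphasmall}) and (\ref{alphabig}).

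For part~(i), taking $t=0$ forces $m=-|\Delta|N(\a)/D(B)\le 0$. The $\a=0$ term (necessarily with $r_\a=0$) supplies the claimed value $\degh\,\ZZ_{\Cal C}(0,D_2v)=-\Lambda'(1,\chi)-\frac12\Lambda(1,\chi)\log(D_2v)$ from the definition in Section~\ref{sectY2}. For the remaining summands one has $\a\ne 0$, and the corresponding archimedean contribution is proportional to $|\und{\ZZ_{\Cal C}^{\top}(m;\bar{\frak a},\l',r_\a)}|$. I would show this set is empty by projecting the integrality condition $r_\a+\bbold\,\l'\in O_{E^{\top}}$ from Definition~\ref{def.ztop} onto the orthogonal decomposition $\End^{0}(E^{\top})\tt\R=\iota(\kay_\R)\oplus(L(E^{\top},\iota)\tt\R)$: the holomorphic projection forces $r_\a\in\iota(\OK)$, hence $\a\in\OK$, and the resulting lattice constraint on $\bbold\,\l'$, combined with $Q(\bbold)=m/N(\bar{\frak a})$ and the fact that $\l'$ generates $\d_2^{-1}\bar{\frak a}/\bar{\frak a}$, admits no solutions unless $\a=0$.

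For part~(ii), with $t>0$ and $\Q(\sqrt{-t})=\kay$, the same expansion applies, and the sum over $\a\in\d_2^{-1}$ with $\tr(\a)=0$ splits into the three regimes $N(\a)<t$, $N(\a)=t$, and $N(\a)>t$. The first and third recover the explicit sums (\ref{alphasmall}) and (\ref{alphabig}) by the same matching of indices used in the proof of Theorem~\ref{theo.weak.main}, the sign of $m$ dictating whether one picks up a geometric cycle or an archimedean Green contribution. In the middle regime, the zero trace condition together with $4t=n^{2}|\Delta|$ forces $\a=\pm n\sqrt{\Delta}/2$, namely the two square roots of $-t$ in $\kay$; each has $m=0$ and contributes $\degh\,\ZZ_{\Cal C}(0,D_2v;\bar{\frak a},\l',r_\a)$, combining to give part~(a).

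The main obstacle is the vanishing claim of part~(i). The underlying linear algebra is routine, but one must track carefully the integral lattice $O_{E^{\top}}$ against the decomposition into holomorphic and anti-holomorphic endomorphisms, together with the divisibility structure of $\l'$ in $\d_2^{-1}\bar{\frak a}/\bar{\frak a}$. Once this combinatorial emptiness is established, the remainder of the proposition is a direct bookkeeping identification of the Fourier coefficients of (\ref{twostars}) with the indicated $\degh$ values.
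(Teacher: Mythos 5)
Your bookkeeping for part (ii) and the reduction of part (i) to the vanishing of the $\a\ne 0$ terms is correct and agrees with the paper, which disposes of (ii) in one sentence. The gap is in your claimed mechanism for the vanishing in part (i).

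You assert that ``the holomorphic projection forces $r_\a\in\iota(\OK)$.'' This is false: the lattice $O_{E^{\top}}=\End(E^{\top})\simeq M_2(\Z)$ does \emph{not} decompose as a direct sum compatible with $\End^0(E^{\top})=\iota(\kay)\oplus L(E^{\top},\iota)\tt\Q$. The projection of $O_{E^{\top}}$ onto the $\kay$-factor is the inverse different $\d^{-1}$, not $\OK$, and correspondingly the integrality condition $r_\a+\bbold\,\l'\in O_{E^{\top}}$ is certainly solvable for many $r_\a\in\d^{-1}\setminus\OK$ --- that is, after all, why the paper introduces the cycles $\ZZ(m;\frak a,\l,r)$ with $r\notin\OK$ in the first place. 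So the projection argument yields no constraint on $\a$, and the claimed emptiness ``unless $\a=0$'' does not follow from local lattice considerations. Worse, nothing in your argument uses the hypothesis that $B$ is a division algebra; a purely local/lattice argument cannot distinguish $B$ from $M_2(\Q)$, where these archimedean terms need not vanish.

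The vanishing is in fact a global phenomenon, and the paper proves it via Hilbert reciprocity. Write $\End^0(E^{\top})=M_2(\Q)=\kay+\kay\,j$ with $j\in L(E^{\top},\iota)$ chosen so that $(\Delta,j^2)_p=1$ for all $p\le\infty$. Any $\bbold$ has the form $\bbold=\b j$ with $\b\in\kay$, and $Q(\bbold)=-N(\b)\,j^2=m/N(\frak a)$. Using $N(\frak a)=\kappa|\Delta|/D(B)$ and $m=-|\Delta|N(\a)/D(B)$, one finds $N(\a)=N(\b)\,j^2\,\kappa$. Now $\a$ is a trace-zero element of $\d^{-1}$, hence $\a=a/\sqrt{\Delta}$ with $a\in\Z$ and $N(\a)=a^2/|\Delta|$, so $(\Delta,N(\a))_p=(\Delta,-\Delta)_p=1$ for every $p$. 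Since also $(\Delta,N(\b))_p=1$ and $(\Delta,j^2)_p=1$, this forces $(\Delta,\kappa)_p=1$ for all $p$. But $(\Delta,\kappa)_p=\inv_p(B)=-1$ at every $p\mid D(B)$, contradicting that $B$ is a division algebra. This contradiction, not any lattice constraint, is what makes the extra terms vanish. You would need to replace your projection argument with this Hasse-invariant argument (or an equivalent appeal to the quaternion structure of $B$) to close the gap.
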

\begin{proof} Part (ii) is immediate. To prove (i), observe that the constant
term of (\ref{twostars}) is $\degh\,\ZZ_{\Cal C}(0,D_2 v)$, the contribution of the $\a=0$ term,
together with the sum
\beq\label{extra.terms} \sum_{\substack{\a\in \d^{-1} \\ \snass \tr(\a)=0}} \degh\,\ZZ(m,D_2v; \bar{\frak a},\l',r_\a),\eeq
where
$- N(\a) = D_2 \mm.$
Note that $\End^0(E^\top) = M_2(\Q) = \kay+\kay j$, where $j\in L(E^\top, \iota)$, with $(\Delta, j^2)_p=1$
for all primes $p\le \infty$. Then, in Definition~\ref{def.ztop},  $\bbold =\b j$, with $\b\in \kay$, and
$Q(\bbold) = - N(\b) j^2 = m/N(\frak a)$. Recalling that $N(\frak a) = \kappa |\Delta|/D(B)$, we have
$$N(\a) = \frac{a^2}{|\Delta|} = \frac{D(B)}{|\Delta|}\,N(\b)\,j^2\,\kappa \frac{|\Delta|}{D(B)} = N(\b)\,j^2\,\kappa.$$
But then, for any $p\le\infty$, the Hilbert symbol has value
$$1= (\Delta, N(\a))_p = (\Delta, N(\b)\,j^2\,\kappa)_p = (\Delta,\kappa)_p.$$
Since $(\Delta,\kappa)_p=-1$ for $p\mid D(B)$,  this contradicts the fact that $B$ is a division algebra.
Thus,  no such $\bbold$ can exist
and the sum (\ref{extra.terms}) is empty.
\end{proof}

\section{\bf Arithmetic adjunction and $j^*_\L(\ZH(0,v))$}\label{section.adjunction}

In this section, we determine $\degh\,j_\L^*\ZH(t,v)$ in the case where $\kay=\Q(\sqrt{-t})$ so that $j_\L(\Cal C)$
and $\ZZ(t)$ have common components, using the arithmetic adjunction
formula, discussed in section 2.7 of \cite{KRYbook}. We also determine the
arithmetic degree $\degh\,j_\L^*\ZH(0,v)$ of the pullback of the constant term $\ZH(0,v)$
of the generating function.

First consider the constant term.
\begin{prop}
$$
\degh\, j_\L^*\ZH(0,v) = - \L'(1,\chi) - \frac12\,\L(1,\chi)\,\log(v D(B)).
$$
\end{prop}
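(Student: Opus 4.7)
The plan is to identify the class $\ZH(0,v)$ on $\M$ explicitly in terms of the arithmetic Hodge bundle, pull this class back via $j_\L$ using the Serre construction, and then evaluate the pullback by the known Kronecker--Chowla--Selberg-type formula on $\Cal C$.  Recall from Chapter~3 of \cite{KRYbook} that the constant term has the shape
\begin{equation*}
\ZH(0,v) \;=\; -\,\widehat{\omega}_{\M} \;+\; \bigl(0,\,c_\infty(v)\bigr),
\end{equation*}
where $\widehat{\omega}_\M = (\omega_\M,\|\cdot\|_{\mathrm{Fal}})$ is the arithmetic Hodge bundle on $\M$ with its natural Faltings metric, and $c_\infty(v)$ is an archimedean correction linear in $\log v$ with coefficient $-\tfrac12\L(1,\chi)$.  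Pulling back via $j_\L$ then reduces the problem to computing $j_\L^*\widehat{\omega}_\M$ as an arithmetic line bundle on $\Cal C$.

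For the underlying line bundle, the Serre construction gives, for any $\Cal C$-scheme $S$ and $A = \L\tt_{\OK}E = j_\L(E,\iota)$, a canonical identification $\Lie(A/S) = \L\tt_{\OK}\Lie(E/S)$; taking the $\Cal O_S$-linear determinant and dualizing produces an isomorphism of $j_\L^*\omega_\M$ with $\omega_{\Cal C}^{\otimes 2}$ twisted by the (constant) discriminant line bundle of the rank $2$ $\OK$-lattice $\L = O_B \subset \kay^2$.  The hypothesis $\gcd(\Delta, D(B))=1$ ensures that this twist decomposes cleanly prime by prime and is trivial on the geometric side.  The comparison between the Faltings metric on $\omega_\M$ and the natural metric on $\omega_{\Cal C}^{\otimes 2}$ then contributes an archimedean correction equal to a rational multiple of $\log D(B)$: concretely, the covolume of $O_B$ as an $\OK$-lattice in $B_\R$ contains a factor $D(B)^{-1}$ (coming from the reduced discriminant of $O_B$), which propagates through the metric comparison.

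Combining the two steps, one obtains a relation of the form
\begin{equation*}
j_\L^*\,\widehat{\omega}_{\M} \;=\; 2\,\widehat{\omega}_{\Cal C} \;+\; \bigl(0,\,\tfrac12\L(1,\chi)\log D(B)\bigr),
\end{equation*}
and finally one applies the evaluation of $\degh\,\widehat{\omega}_{\Cal C}$ on the CM curve $\Cal C$.  This is the arithmetic content already present in \cite{KRYtiny}: the Faltings height of a CM elliptic curve together with the Chowla--Selberg formula yields $\degh\,\widehat{\omega}_{\Cal C} = -\tfrac12\L'(1,\chi)$ in the normalization fixed by $(\ref{Lschi})$.  Assembling the three contributions and collecting the $\log v$ and $\log D(B)$ terms then produces $-\L'(1,\chi) - \tfrac12\L(1,\chi)\log(vD(B))$, as claimed.

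The main obstacle is the bookkeeping of archimedean normalization constants.  Two archimedean corrections -- the one built into the definition of $\ZH(0,v)$ and the one coming from the metric comparison under the Serre construction -- must combine to exactly $-\tfrac12\L(1,\chi)\log(vD(B))$.  Showing that the coefficient of $\log D(B)$ is precisely $-\tfrac12\L(1,\chi)$ is equivalent to a precise volume computation for $O_B$ viewed as an $\OK$-module, and this is the step where the disjoint-ramification hypothesis $\gcd(\Delta,D(B))=1$ plays its role, allowing the local factors at primes dividing $D(B)$ to be read off independently of the ramification in $\kay$.
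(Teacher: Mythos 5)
Your skeleton is the same as the paper's: write $\ZH(0,v)$ as $-\widehat\o$ plus an archimedean correction, observe that $j_\L^*\o$ is the square of the Hodge bundle of $\Cal C$ via the Serre construction, and evaluate with the Chowla--Selberg/Faltings height identity $2h^*_{Fal}(E)=\Lambda'(1,\chi)/\Lambda(1,\chi)$. However, you have misplaced the $\log D(B)$ term, and the step you invent to recover it is not correct. By (3.5.7) of \cite{KRYbook} the constant term is $\ZH(0,v)=-\widehat\o-(0,\log(vD(B)))$: the $\log D(B)$ is already part of the archimedean correction in the \emph{definition} of the class, and after pullback its degree is $-\deg_\Q(\Cal C)\log(vD(B))=-\frac12\Lambda(1,\chi)\log(vD(B))$, which is the entire second term of the proposition. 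You instead take the correction to be linear in $\log v$ alone and claim that $j_\L^*\widehat\o$ differs from the square of $\widehat{\o}_{\Cal C}$ by a metric discrepancy proportional to $\log D(B)$ coming from the covolume of $O_B$. No such discrepancy occurs: as a right $\OK$-module $\L=O_B$ is projective of rank $2$, hence isomorphic to $\OK\oplus\frak b$ for some fractional ideal $\frak b$, so $A=\L\tt_{\OK}E$ is isomorphic to a product of two elliptic curves with CM by $\OK$, all of which have the same Faltings height; thus $\degh\,j_\L^*\widehat\o = 2\cdot\frac{2h_{\smallkay}}{w_{\smallkay}}\,h^*_{Fal}(E)=\Lambda'(1,\chi)$ exactly, with no $D(B)$-dependent term. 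Your volume heuristic is not a proof of the claimed correction, and if one kept the correct form of (3.5.7) alongside it, the $\log D(B)$ would be counted twice.

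There is also a sign inconsistency in your last step. You assert $\degh\,\widehat{\o}_{\Cal C}=-\frac12\Lambda'(1,\chi)$; since $\ZH(0,v)$ contains $-\widehat\o$ and $j_\L^*\widehat\o$ pulls back to twice $\widehat{\o}_{\Cal C}$, this would make the nonarchimedean contribution $+\Lambda'(1,\chi)$, not $-\Lambda'(1,\chi)$. In the normalization of the paper one has $\degh\,\widehat{\o}_{\Cal C}=\frac{2h_{\smallkay}}{w_{\smallkay}}\,h^*_{Fal}(E)=+\frac12\Lambda'(1,\chi)$, using $\Lambda(1,\chi)=2h_{\smallkay}/w_{\smallkay}$ and (10.82) of \cite{KRYcompo}, and the minus sign in the final formula comes from the minus sign in front of $\widehat\o$ in $\ZH(0,v)$. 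So the correct proof consists of: (i) quoting (3.5.7) with the $\log(vD(B))$ included, (ii) the exact identity $\degh\,j_\L^*\widehat\o=\Lambda'(1,\chi)$ with no correction term, and (iii) $\deg_\Q(\Cal C)=h_{\smallkay}/w_{\smallkay}=\frac12\Lambda(1,\chi)$ for the degree of the archimedean part.
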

\begin{proof}
Recall that, \cite{KRYbook}, (3.5.7),
$$\ZH(0,v) = -\widehat\o - (0,\log(v D(B))) \in \CH^1(\Cal M),$$
where $\widehat\o$ is the Hodge bundle, metrized as in \cite{KRYcompo},p. 987.  Thus,
$$\degh\, j_\L^*\ZH(0,v) = - 2 \,\frac{2h_{\smallkay}}{w_{\smallkay}}\,h^*_{Fal}(E) -\deg_\Q(\Cal C)\cdot\log(v D(B)).$$
Here the initial factor of $2$ comes in because the pullback of the Hodge line bundle $\omega$
of $\Cal M$, cf. section~3.3 of \cite{KRYbook},  is the square of the Hodge line bundle for the moduli space  $\Cal C$ of
CM elliptic curves. The factor $2h_{\smallkay}$ arises from the fact that the
Faltings height $h^*_{Fal}(E)$ is given by the arithmetic degree of the Hodge line bundle divided by the degree
of the Hilbert class field. Finally, the factor $w_{\smallkay}$ in the denominator arises from the
stack.  Also note that $\deg_\Q(\Cal C) = h_{\smallkay}/w_{\smallkay}$.
Using the fact that, \cite{KRYcompo}, (10.82),
\begin{equation}
2\,h^*_{\text{Fal}}(E) = \frac12\,\log|\Delta| + \frac{L'(1,\chi)}{L(1,\chi)} -\frac12\,\log(\pi) - \frac12\,\gamma
= \frac{\L'(1,\chi)}{\L(1,\chi)},
\end{equation}
and $\L(1,\chi) = 2 h_{\smallkay}/w_{\smallkay}$, where $\L(s,\chi)$ is given by (\ref{Lschi}), we obtain
the claimed expression.
\end{proof}

Now we turn to the case where $t>0$ with $\Q(\sqrt{-t})=\kay$,
so that $\ZZ(t)$ and $j_\L(\Cal C)$
are not disjoint on the generic fiber, and we write $4t = n^2 |\Delta|$.  Then, there is a decomposition,
\cite{KRYbook}, (7.4.7),
\begin{equation}\label{Ztdecompo}
\ZZ(t) = \sum_{\substack{c\mid n\\ \snass (c,D(B))=1}} \ZZh(t:c) + \ZZv(t),
\end{equation}
of divisors on $\M$. The idea is that, in the definition of $\ZZ(t)$, we are imposing an action of the order $\Z[\sqrt{-t}]$ of conductor $n$,
while, along the component $\ZZh(t;c)$ there is an action of the order $O_{c^2|\Delta|}$ of conductor $c$.
The divisor $\ZZv(t)$ consists of vertical components in the fibers of bad reduction for $p\mid D(B)$.

\begin{lem}\label{Ztdecompo.lem} (i) If the $\OK$-lattices $\L$ and $\L'$ in $\kay^2$ are associated to
two-sided $O_B$-ideals that are inequivalent for the right translation
action of $\A_{\smallkay,f}^\times\cap N(\widehat O_B)$, then the cycles
$j_\L(\Cal C)$ and $j_{\L'}(\Cal C)$ are disjoint on the generic fiber. The number of such inequivalent $\L$'s is $2^{o(D_2)}$,
where $o(D_2)$ is the number of divisors of $D(B)$ that are inert in $\kay$.
\hfb
(ii) As a divisor on $\Cal M$,
$$\ZZh(t:1) = 2\sum_{\L'} j_{\L'}(\Cal C),$$
where $\L'$ runs representatives for the equivalence classes of $\OK$-lattices in $\kay^2$
described in (i)
\end{lem}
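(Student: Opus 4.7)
The plan is to reduce both parts to algebraic assertions about left $O_B$-ideals in $B$ via the equivariant isomorphism (\ref{equiiso}), and then to use Serre's construction to pass between lattices and abelian varieties.

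The key observation for part (i) is that two two-sided $O_B$-ideals $I$ and $I'=I\bbbold$ that are right translates by some $\bbbold\in\A^\times_{\smallkay,f}$ must already satisfy $\bbbold\in N(\widehat O_B)$: the two-sidedness of $I$ gives $I^{-1}I=O_B$ adelically, and the two-sidedness of $I'$ forces $\bbbold O_B\bbbold^{-1}\subseteq O_B$, hence equality by maximality. Granted this, the disjointness of $j_\L(\Cal C)$ and $j_{\L'}(\Cal C)$ on the generic fiber follows from Serre's equivalence: a common geometric point yields $\L\otimes_{\OK} E\simeq\L'\otimes_{\OK} E'$ as $O_B$-abelian varieties, and writing $E'\simeq\mathfrak c\otimes_{\OK} E$ via the transitive $\CL(\kay)$-action on generic CM elliptic curves, I obtain an $(O_B,\OK)$-bimodule isomorphism $\L\simeq\L'\mathfrak c$. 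Any such isomorphism is right multiplication by some $y\in\kay^\times$, so $\L=\L'\bbbold$ for an $\bbbold\in\A^\times_{\smallkay,f}$ representing $\mathfrak c y^{-1}$; the observation then forces $\bbbold\in N(\widehat O_B)$, contradicting the hypothesis of inequivalence.

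For the count, two-sided $O_B$-ideals are parameterized by exponent vectors $(e_p)_{p\mid D(B)}\in\Z^{o(D(B))}$ via $\prod\mathcal P_p^{e_p}$, and right translation by $\bbbold\in\A^\times_{\smallkay,f}\cap N(\widehat O_B)$ shifts $e_p$ by $\ord_{B_p}(\bbbold_p)$. Since $\kay_p^\times\subseteq N(O_{B,p})$ at every $p\mid D(B)$, the image of $\ord_{B_p}|_{\kay_p^\times}$ is $2\Z$ at primes $p\mid D_2$ inert in $\kay$ (using $\ord_{B_p}(a)=2\ord_{\kay_p}(a)$) and all of $\Z$ at primes $p\mid D_1$ ramified in $\kay$ (where $\ord_{B_p}(a)=\ord_{\kay_p}(a)$). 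The quotient is $\prod_{p\mid D_2}\Z/2\Z$, of cardinality $2^{o(D_2)}$.

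For part (ii), I interpret $\ZZh(t:1)$ as the horizontal components on which the CM order generated by $\iota_B(O_B)$ together with the special endomorphism $x$ equals the maximal order $\OK$ (conductor one). Consequently, for any generic point $(A,\iota_B,x)\in\ZZh(t:1)$ the pair $(A,\iota_B)$ is of the form $j_{\L'}(E,\iota)$ via Serre's construction, with $\L'$ varying over representatives of the equivalence classes of part (i). For ordinary $E$, Lemma~\ref{ordinary} identifies $L(A,\iota_B)$ with the rank-one $\Z$-lattice of trace-zero elements of $\OK$; writing $4t=n^2|\Delta|$, the equation $x^2=-t$ admits exactly the two solutions $\pm x_0$ in this lattice, explaining the multiplicity two. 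The main obstacle is the adelic bookkeeping behind the index computation in (i)---specifically, verifying that the normalizer condition $\bbbold_p\in N(O_{B,p})$ at primes $p\nmid D(B)$ does not force extra restrictions on the exponent shifts (which it does not, since these exponents are already zero in a two-sided ideal).
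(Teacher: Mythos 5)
Your part (i) is essentially correct, and it actually supplies an argument the paper leaves implicit (the paper's printed proof addresses only (ii)). The key observation that a right translate $I\bbbold$ of a two-sided ideal is again two-sided only if $\bbbold$ normalizes $\widehat O_B$ (compare right orders), the reduction of a common generic point to an $(O_B,\OK)$-bimodule isomorphism $\L\simeq \L'\frak c$ using $\End_{\OK}(E)=\OK$ on the generic fiber, and the local computation giving the index $2^{o(D_2)}$ are all sound. One small quibble: two-sided ideals also have components $q^nO_{B,q}$ at $q\nmid D(B)$, so your parenthetical ``these exponents are already zero'' is not literally right; what saves the count is that translation by $\Q_q^\times\subset \A^\times_{\smallkay,f}\cap N(\widehat O_B)$ kills those components.

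For part (ii) you take a genuinely different route from the paper, and the route has a gap at its pivotal step. The paper proves only the easy containment --- each generic point of $j_{\L'}(\Cal C)$ carries the distinguished special endomorphism $1\tt\iota(\sqrt{-t})$ of conductor one, so the right-hand side is contained in $\ZZh(t:1)$ --- and then forces equality of divisors by comparing generic degrees: the variant $\deg_\Q \ZZh(t:c)=2^{o(D_2)+1}h(c^2|\Delta|)/w(c^2|\Delta|)$ of (3.4.5) of \cite{KRYbook} against $2\cdot 2^{o(D_2)}\cdot h_{\smallkay}/w_{\smallkay}$, using the disjointness from (i). You instead argue the reverse containment, and the sentence beginning ``Consequently, \dots the pair $(A,\iota_B)$ is of the form $j_{\L'}(E,\iota)$'' asserts, without proof, the essential surjectivity of the Serre construction onto QM abelian surfaces with an optimal $\OK$-action. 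That assertion is true, but it is the entire content of your step: one must identify $H_1(A,\Z)$ as an $(O_B,\OK)$-bimodule, deduce $A\simeq L\tt_{\OK}E$, and then use Proposition~\ref{maxorders}(iii) to move $L$ into the list of representatives $\L'$ --- and it is precisely what the paper's degree count is engineered to avoid. Your closing sentence locates the ``main obstacle'' in the adelic bookkeeping of (i); the real obstacle is here. The multiplicity-two computation (the two roots $\pm x_0$ of $x^2=-t$ in the trace-zero part of $\OK$) is fine.
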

\begin{proof}  It suffices to check this on the generic fiber, since the cycles in question are flat over $\Spec(\OK)$.
The generic fiber of the divisor on the right side is contained in that of the one on the left.
Note that, by a slight variant of (3.4.5) of \cite{KRYbook} with the same proof,
$$\deg_\Q \ZZh(t:c) = 2^{o(D_2)+1}\,\frac{h(c^2|\Delta|)}{w(c^2|\Delta|)}.$$
Recalling that $\deg_\Q \Cal C = h_{\smallkay}/w_{\smallkay}$, we see that the degrees of the two sided coincide.
\end{proof}

\begin{rem}
There is an error at this point in Lemma~7.4.2 of \cite{KRYbook}, where the irreducibility
of $\ZZh(t:c)$ is incorrectly claimed, whereas its actual decomposition into irreducible components can be obtained by the
construction of Remark~3.4.7.
\end{rem}

Let
$$\ZZ(t)^o = \ZZ(t) - 2 \,j_\L(\Cal C),$$
and, for convenience, write $\ZZ_o=j_\L(\Cal C)$.
Let $\ZH(t,v)^o$ and $\Zh_o(tv)$ denote the corresponding classes in
$\CH^1(\M)$, where the Green functions are defined as in \cite{KRYbook}, section 3.5.
Recall that these Green functions depend on the auxillary parameter $v\in \R_{>0}$.
Then
$$\Zh(t,v) = \Zh(t,v)^o+ 2\,\Zh_o(tv),$$
and
$$\degh\,j^*\Zh(t,v) = \degh\,j^*\Zh(t,v)^o + 2\,\degh\,j^*\Zh_o(tv).$$

The quantity $\degh\,j^*\Zh_o(tv)$ is given by the arithmetic adjunction formula, (i) of Theorem~2.7.2 in \cite{KRYbook}.
More precisely,
\begin{equation}\label{adjunction}
\degh\,j^*\Zh_o(tv) = - \degh\,j^*\widehat{\obold} + \frak d_{\ZZ_o} +
\frac12 \sum_{\substack{P, P' \in \ZZ_o(\C)\\
\snass P\ne P'}} e_P^{-1}\,e_{P'}^{-1} \sum_{\substack{\gamma\in \Gamma\\ \snass z \ne \gamma z'}}
g^0_{tv}(z,\gamma z'),
\end{equation}
where, for $z$ and $z'\in D$, the function $g^0_{tv}(z,z')$ is defined in Proposition~7.3.1 and (7.3.42) of \cite{KRYbook},
and where $\frak d_{\ZZ_o}$ is the discriminant term.  In the sum, $z$ (resp. $z'$) is a preimage of $P$ (resp. $P'$)
in $D$.
Note that, as explained in the proof of Proposition~7.5.1, p.226 of \cite{KRYbook}, $\widehat{\obold}$ is the
relative dualizing sheaf on $\Cal M$ with metric determined by $-g_{tv}^0$.
Thus, by Lemma~7.5.2,
$$\widehat{\obold} = \widehat{\o} + (0,\log(4tv D(B)),$$
and
\begin{align*}
- \degh\,j^*\widehat{\obold} &=- \degh\,j^*\widehat\o - \deg_\Q(\Cal C)\,\log(4tv D(B))\\
\nass
{}&= - 2 \,\frac{2h_{\smallkay}}{w_{\smallkay}}\,h^*_{Fal}(E) - \deg_\Q(\Cal C)\,\log(4tv D(B))\\
\nass
{}&= -\L'(1,\chi) - \frac12\,\L(1,\chi)\,\log(vD(B)) -\frac12\,\L(1,\chi)\,\log(n^2|\Delta|).
\end{align*}
Also, the discriminant term is simply
$$\frak d_{\ZZ_o}  = \frac{h_{\smallkay}}{w_{\smallkay}}\,\log|\Delta| =\frac12\,\L(1,\chi)\,\log|\Delta|.$$
\begin{lem} When $D(B)=D_2$,
$$-2\, \degh\,j^*\widehat{\obold}+2\frak d_{\ZZ_o}
=2\, \degh\,\ZZ_{\Cal C}(0,D_2 v) - \L(1,\chi)\,\log(n^2).$$
\end{lem}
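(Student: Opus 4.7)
The plan is a direct bookkeeping computation combining the two displays immediately preceding the lemma.  First I would double the formula
$$
-\degh\,j^*\widehat{\obold} = -\L'(1,\chi) - \tfrac12\,\L(1,\chi)\log(vD(B)) - \tfrac12\,\L(1,\chi)\log(n^2|\Delta|)
$$
and double the discriminant contribution $\frak d_{\ZZ_o} = \tfrac12\,\L(1,\chi)\log|\Delta|$.  Adding these, the two $\log|\Delta|$ contributions cancel, leaving
$$
-2\,\degh\,j^*\widehat{\obold} + 2\frak d_{\ZZ_o} = -2\L'(1,\chi) - \L(1,\chi)\log(vD(B)) - \L(1,\chi)\log(n^2).
$$

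Next I would invoke the definition of $\degh\,\ZZ_{\Cal C}(0,v)$ from the constant term computation in Part I, namely
$$
\degh\,\ZZ_{\Cal C}(0,D_2 v) = -\L'(1,\chi) - \tfrac12\,\L(1,\chi)\log(D_2 v),
$$
so that
$$
2\,\degh\,\ZZ_{\Cal C}(0,D_2 v) = -2\L'(1,\chi) - \L(1,\chi)\log(D_2 v).
$$
The hypothesis $D(B) = D_2$ is precisely what is needed to identify $\log(vD(B))$ with $\log(D_2 v)$, after which subtracting gives the residual term $-\L(1,\chi)\log(n^2)$ and the claimed identity follows.

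There is no real obstacle here; the only thing to watch is ensuring the $\L(1,\chi)\log|\Delta|$ terms cancel correctly (the sign of the discriminant term $\frak d_{\ZZ_o}$ matches the $-\tfrac12\log|\Delta|$ coming out of $\log(n^2|\Delta|)$), and that the hypothesis $D(B) = D_2$ is invoked exactly once to match $\log D(B)$ with $\log D_2$.  Under the disjoint ramification assumption of Theorem~\ref{theo.main.strong} this hypothesis is automatic, as noted in part (i) of Proposition~\ref{prop.last.facts}.
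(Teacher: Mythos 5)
Your computation is correct and is exactly the argument the paper intends: the lemma is a direct consequence of doubling the two displayed formulas for $-\degh\,j^*\widehat{\obold}$ and $\frak d_{\ZZ_o}$ immediately preceding it, cancelling the $\L(1,\chi)\log|\Delta|$ terms, and comparing with the constant-term formula $\degh\,\ZZ_{\Cal C}(0,D_2 v) = -\L'(1,\chi)-\tfrac12\L(1,\chi)\log(D_2 v)$ under the hypothesis $D(B)=D_2$. Nothing is missing.
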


Next we consider the finite part of $\degh\,j_\L^*\ZZ(t,v)^o$.
\begin{prop} Assume that $\Delta$ and $D(B)$ are relatively prime.
Then the finite part of $\degh\,j_\L^*\ZZ(t,v)^o$ is the sum of (\ref{alphasmall}) and the quantity
$2\,\deg_{\Q}(\Cal C)\,\log(n^2).$
\end{prop}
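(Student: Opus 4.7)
The plan is to compute $j_\L^*\ZZ(t)^o$ as an honest $0$-cycle on $\Cal C$ by splitting $\ZZ(t)^o$ according to the decomposition (\ref{Ztdecompo}).  Using Lemma~\ref{Ztdecompo.lem}(ii) to replace $\ZZh(t:1)-2j_\L(\Cal C)$ by $2\sum_{\L'\ne\L}j_{\L'}(\Cal C)$, one has
\begin{equation*}
\ZZ(t)^o \;=\; 2\sum_{\L'\ne\L}j_{\L'}(\Cal C)\;+\;\sum_{\substack{c\mid n,\,c>1\\ (c,D(B))=1}}\ZZh(t:c)\;+\;\ZZv(t),
\end{equation*}
each summand being disjoint from $j_\L(\Cal C)$ on the generic fiber, so $j_\L^*\ZZ(t)^o$ is a proper intersection and its arithmetic degree decomposes as a sum of local lengths weighted by $\log N(\frak p)$ over closed points $\frak p$ of $\Spec(\OK)$.

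At $\frak p$ above a prime $p$ non-split in $\kay$ (which, under our coprimality hypothesis, includes all $p\mid D(B)$), I would use Proposition~\ref{special.endos} to enumerate the special endomorphisms $x=[\a,\bbold]$ of $A=j_\L(E)$ for each $(E,\iota)\in\Cal C(\bF)$.  Passage from $\ZZ(t)$ to $\ZZ(t)^o$ removes the improper locus $\a=\pm\sqrt{-t}$, $\bbold=0$; the surviving terms satisfy $N(\a)<t$ and $\bbold\ne 0$, with local length given by Gross's formula, Proposition~\ref{length.prop}, applied to $m=\frac{|\Delta|}{D}(t-N(\a))$.  Summing over $(E,\iota)$ and $\a$ reproduces the contribution of $\frak p$ to (\ref{alphasmall}), exactly as in the proof of Proposition~\ref{geopullback}; the contributions of $\ZZv(t)$ and of $j_{\L'}(\Cal C)$ for $\L'\ne\L$, both supported at non-split primes, are automatically absorbed since all describe special endomorphisms of the same $A$.

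For $\frak p$ above a split prime $p\mid n$ (necessarily coprime to $\Delta D(B)$), the reduction $E$ is ordinary with $\End(E)=\OK$, so by Lemma~\ref{ordinary} the only special endomorphisms of $A=j_\L(E)$ are $[\pm\sqrt{-t},0]$, entirely removed by the subtraction.  The residual intersection with $j_\L(\Cal C)$ therefore comes solely from the conductor-drop cycles $\ZZh(t:c)$ for $c\mid n$, $c>1$.  Applying Deuring--Gross canonical-lifting theory to each order $O_{c^2|\Delta|}$ of conductor $c$, I would show that $\sum_{c>1}\lg(j_\L(\Cal C)\cap \ZZh(t:c))_{\frak p}$ equals $2\deg_\Q(\Cal C)\cdot\ord_p(n^2)$, the factor $2$ reflecting the two signs of $\sqrt{-t}$; summing $\log p$ times this quantity over $p\mid n$ yields $2\deg_\Q(\Cal C)\log(n^2)$.

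The principal technical obstacle is this local length computation at split primes $p\mid n$.  Verifying that the sum over $c\mid n$ of the intersection lengths $\lg(j_\L(\Cal C)\cap\ZZh(t:c))_{\frak p}$ collapses to the clean expression $2\deg_\Q(\Cal C)\,\ord_p(n^2)$ requires a careful deformation-theoretic analysis of how the conductor of the CM order changes along the canonical lift of $E$, paralleling the Whittaker-derivative computations of Section~\ref{sectWhittaker} and the conductor analysis of Chapter~7 of \cite{KRYbook}.
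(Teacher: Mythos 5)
Your decomposition of $\ZZ(t)^o$ via (\ref{Ztdecompo}) and Lemma~\ref{Ztdecompo.lem}(ii) is a legitimate alternative starting point, but the way you sort the contributions between split and non-split primes contains a genuine gap. You assert that at a non-split prime $p$ the surviving proper intersection is exactly the contribution of (\ref{alphasmall}), coming from the special endomorphisms $y=[\a,\bbold]$ with $\bbold\ne 0$; and that the $2\deg_\Q(\Cal C)\log(n^2)$ term comes entirely from the split primes $p\mid n$ via the cycles $\ZZh(t:c)$, $c>1$. This is not correct when a non-split prime divides $n$ (which certainly happens; $n$ can have inert or ramified prime factors). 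At such a prime the formal branches of $i_*\ZZ(t)$ through a point $x=j_\L(E)$ are, by \cite{KRYbook}, Prop.~7.7.4, $\sum_y\sum_{s=0}^{\ord_p(n)}\Cal W_s(\psi_y)$; after removing $2\,\Cal W_0(\psi_{y_0})=2\,(j_\L(\Cal C))^\wedge_x$, the piece $2\sum_{s\ge 1}\Cal W_s(\psi_{y_0})$ still intersects $j_\L(\Cal C)$ properly, and its contribution over $\Cal C(\bar\F_p)$ is $2\deg_\Q(\Cal C)\,\ord_p(n^2)\log p$. These quasi-canonical (``conductor-drop'') branches have $\bbold=0$ but level $s\ge 1$, so your enumeration ``$\bbold\ne 0$, use Prop.~\ref{length.prop}'' misses them. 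In effect you only justify the $\log(n^2)$ term at split primes, and then sum it over \emph{all} $p\mid n$; that final sum is exactly where the unproved cases hide.

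Beyond that, the split-prime computation you flag as ``the principal technical obstacle'' is the content of the same quasi-canonical argument: at an ordinary point $\Cal W_s(\psi_{y_0})$ is the locus where only the order of conductor $p^s$ deforms, $(\Cal W_0,\Cal W_s)=m_0(p)=1$ for $s\ge 1$, and the sum over $s=1,\dots,\ord_p(n)$ and the two signs of $y_0$ gives $2\ord_p(n)$ per point. The paper treats split and non-split $p\nmid D(B)$ uniformly this way, and handles $p\mid D(B)$ separately via the $p$-adic uniformization of \cite{krinvent} (using $\Cal N(\psi_{y_0})^{\mathrm{ver}}$ in place of the $\Cal W_s$), finding the identical local quantity $2\deg_\Q(\Cal C)\,\ord_p(n^2)\log p$ in every case. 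So the uniform statement you want is exactly what the formal-branch picture delivers; reorganizing by (\ref{Ztdecompo}) is possible but requires you to re-derive, at every $p\mid n$ regardless of splitting type, that the net intersection of $j_\L(\Cal C)$ with $\sum_{c>1}\ZZh(t:c)+\ZZv(t)+2\sum_{\L'\ne\L}j_{\L'}(\Cal C)$ decomposes as (\ref{alphasmall})$_p$ plus $2\deg_\Q(\Cal C)\ord_p(n^2)\log p$, which is the very content you are trying to prove. As written, the argument double-counts nothing at split primes but undercounts at non-split $p\mid n$.
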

\begin{proof}
Here we have to take into account the
following rather subtle phenomenon.  First, note that, for  an object  $(E,\iota)$ in $\Cal C(S)$, the abelian scheme
$A = \L\tt_{\OK}E$ has a `distinguished' special endomorphism  $\xx = 1\tt\iota(\sqrt{-t})$, where $2\sqrt{-t} = n\sqrt{\Delta}$.
We utilize the notation of Chapter 7 of \cite{KRYbook} and refer the reader to that chapter for further details.

In the case where $p\nmid D(B)$,  the formal branches of $i_*\ZZ(t)$ through a point $x\in \M_p$ corresponding to
$(A,\iota_B)$ are described in Proposition~7.7.4 of \cite{KRYbook} as follows:
$$(i_*\ZZ(t))^{\wedge}_x = \sum_{\substack{y\in V(A,\iota_B)\\ \snass Q(y)=t}} \sum_{s=0}^{\ord_p(n)} \Cal W_s(\psi_y).$$
Here $i:\ZZ(t)\lra \M$ is the unramified morphism, and $\Cal W_s(\psi)$ is the quasi-canonical divisor of level $s$
associated to $\psi$, \cite{KRYbook}, p.240.  If $x$ lies in $j_\L(\Cal C)$, then
$ (j_\L(\Cal C))^\wedge_x = \Cal W_0(\psi_{y_0}),$
where $y_0$ is the distinguished special endomorphism.  Thus,
\begin{equation}\label{extra.comp.I}
(i_*\ZZ(t)^o)^\wedge_x = \sum_{\substack{y\in V(A,\iota_B)\\ \snass Q(y)=t\\ \snass y\ne \pm y_0}} \sum_{s=0}^{\ord_p(n)} \Cal W_s(\psi_y)
+ 2\sum_{s=1}^{\ord_p(n)} \Cal W_s(\psi_{y_0}),
\end{equation}
where the factor of $2$ in the second summand is due to the fact that $\Cal W_s(\psi_{y_0}) = \Cal W_s(\psi_{-y_0})$.
In particular, the component $\Cal W_0(\psi_{y_0})$ has been removed.
Recall that the local intersection number is given by, \cite{KRYbook}, Proposition~7.7.7,
$(\Cal W_0(\psi), \Cal W_s(\psi)) = m_0(p)$
where $m_0(p)=2$ if $p$ is ramified in $\kay$ and $m_0(p)=1$ otherwise.
Thus, the contribution of the terms in the second summand in (\ref{extra.comp.I}), summed over the points of $\Cal C(\bar\F_p)$, is
$$\frac{1}{w_{\smallkay}}\,2\,\ord_p(n)\,m_0(p)\,\sum_{x\in \Cal C(\bar\F_p)} \log|\kappa(x)| = 2 \,\frac{h_{\smallkay}}{w_{\smallkay}}
\,\ord_p(n^2)\,\log(p).$$
It follows that the $\log p$ part of $\degh\,j^*\Zh(t,v)^o$ is given by the $\log p$ part of (\ref{alphasmall}) together with the additional term
\begin{equation}\label{additional.term}
2 \deg_{\Q}(\Cal C)\,\ord_p(n^2)\,\log p.
\end{equation}

Next suppose that $p\mid D(B)$.  Again, we begin with a calculation on
$\M$ and consider both inert and ramified $p$.
Now  the formal branches of $i_*\ZZ(t)$ through a point $x\in \M_p$ corresponding to
$(A,\iota_B)$ can be described using the results about the $p$-adic uniformization of the special cycles given section 8 of \cite{krinvent}.
We have
\begin{equation}\label{branches.pmidDB}
(i_*\ZZ(t))^{\wedge}_x = \sum_{\substack{y\in V(A,\iota_B)\\ \snass Q(y)=t}} \Cal N(\psi_y)_x,
\end{equation}
where the notation, which differs slightly from that of \cite{krinvent}, is as follows.  Let $(\X,\iota_B)$ be the $p$-divisible
group of $A$ with it's $O_B$-action and let $\Cal N$ be the Rapoport-Zink space parametrizing special formal $O_B$ modules $(X,\rho)$,
cf. \cite{krinvent}, p.154, where we require the quasi-isogeny $\rho$ to have height $0$.  A special endomorphism $y$ of $(A,\iota_B)$
gives rise to a special endomorphism $\psi_y$ of $(\X,\iota_B)$, and we denote by $\Cal N(\psi_y)$ the locus in
$\Cal N$ where it deforms. This locus was denoted by $Z(j)$ in \cite{krinvent}. Finally, we denote by $\Cal N(\psi_y)_x$
the branches of $\Cal N(\psi_y)$ at the point $x$.  The decomposition (\ref{branches.pmidDB}) then follows from the
description of the $p$-adic uniformization of the special cycle given in (8.17) and (8.20) of \cite{krinvent}.

Now suppose that $x$ lies in $j_\L(\Cal C)$, and let $y_0$ be the distinguished special endomorphism.
Then
\begin{equation}\label{branches.pmidDB.II}
(i_*\ZZ(t)^o)^{\wedge}_x = \sum_{\substack{y\in V(A,\iota_B)\\ \snass Q(y)=t\\ \snass y\ne \pm y_0}} \Cal N(\psi_y)_x + 2\, \Cal N(\psi_{y_0})^\text{\rm ver}_x,
\end{equation}
where $\Cal N(\psi_{y_0})^\text{\rm ver}_x$ is the vertical part  of $\Cal N(\psi_{y_0})_x$.
Note that the divisors $\Cal N(\psi_{\pm y_0})_x^{\text{\rm hor}}$
have been omitted on the right side of (\ref{branches.pmidDB}).

First suppose that $p\ne 2$, and write $4t = n^2 |\Delta|$.  Then $\ord_p(t) = 2 \,\ord_p(n)$ (resp. $2\,\ord_p(n)+1$) if $p$ is inert (resp. ramified)
in $\kay$.  The structure of $\Cal N(\psi_{y_0})$ is then shown in the pictures on p.161 of \cite{krinvent} and described in
Proposition~4.5.  In the inert case, $\Cal N(\psi_{y_0})^\text{\rm ver}_x$
consists of a single component of the special fiber $\M_p$ with multiplicity $\ord_p(n)$, and the intersection multiplicity at $x$ of this component with
$j_\L(\Cal C)$ is $1$. In the ramified case, $\Cal N(\psi_{y_0})^\text{\rm ver}_x$ consists of $2$ components of $\M_p$ meeting at $x$,
each with multiplicity $\ord_p(n)$,
and the intersection multiplicity at $x$ of each of them with $j_\L(\Cal C)$ is $1$, cf. Lemma~4.9.
Again, it follows that the $\log p$ part of $\degh\,j^*\Zh(t,v)^o$ is given by the $\log p$ part of (\ref{alphasmall}) together with the additional term
(\ref{additional.term}).

Finally, suppose that $p=2$. In this case the structure of $\Cal N(\psi_{y_0})$ is given in the appendix to section 11 of \cite{KRYcompo}
and in section 6A.2 of \cite{KRYbook}.  Since the field $\kay = \Q(\sqrt{-t})$ spits $B$, we need only consider the
cases in which $2$ is not split in $\kay$, using the descriptions on p.187 of \cite{KRYbook}.  Again we write
$4t = n^2|\Delta|$ and $t = \varepsilon p^\a\in \Z_2$. In the inert case {\bf (2)}, there is one
`central' component of $\Cal N(\psi_{y_0})^{\text{\rm ver}}_x$. It has multiplicity $\mu= \frac{\a}2+1$ and
its intersection number with the divisor $j_\L(\Cal C)_x^{\wedge}$ is $1$.  Note that $\ord_2(\Delta)=0$ in this case, so that
$\mu=\ord_2(n)$.  In the ramified case {\bf (3)}, there are a pair of `central' components of $\Cal N(\psi_{y_0})^{\text{\rm ver}}_x$
meeting in a unique superspecial point and each having multiplicity $\mu = \frac{\a}2$. The intersection number of
$j_\L(\Cal C)_x^{\wedge}$ with each of these central components is $1$, and, since $\ord_2(\Delta)=2$,
$\mu= \ord_2(n)$.  Finally, the configuration in case {\bf (3)} is the same as in case {\bf (2)}, except that now
the multiplicity of the central components is $\mu= \frac{\a-1}2$. Since $\ord_2(\Delta)=3$, we again find that
$\mu= \ord_2(n)$.  Thus, we find the same contribution as in the other cases.
\end{proof}

\begin{proof}[Proof of Theorem~\ref{theo.main.strong}]
We simply observe that the Fourier coefficients of the pullback that are not covered by Theorem~\ref{theo.weak.main}
have now been computed and match those described in Proposition~\ref{prop.last.facts}.  Note that the archimedean
contributions include the archimedean term in the adjunction formula.
\end{proof}

\vskip -.2in

\end{document}